\newcommand{\toc}{\tableofcontents}
\theoremstyle{plain}
\newtheorem{theorem}{Theorem}[section]
\newtheorem*{theorem*}{Theorem}
\newtheorem*{corollary*}{Corollary}
\newtheorem{lemma}[theorem]{Lemma}
\newtheorem{proposition}[theorem]{Proposition}
\theoremstyle{definition}
\newtheorem{remark}[theorem]{Remark}
\newtheorem{example}[theorem]{Example}
\newtheorem{definition}[theorem]{Definition}
\newtheorem*{definition*}{Definition}
\DeclareMathOperator{\Lk}{\mathrm{Lk}}
\DeclareMathOperator{\Ner}{\mathrm{Ner}}
\DeclareMathOperator{\Co}{\mathrm{Co}}
\newcommand{\acts}{\curvearrowright}
\newcommand{\G}{\Gamma}
\newcommand{\p}{\varphi}
\renewcommand{\d}{\mathrm{d}}
\newcommand{\IR}{\mathbb{R}}
\newcommand{\CI}{\mathbb{C}}
\newcommand{\IF}{\mathbb{F}}
\newcommand{\IZ}{\mathbb{Z}}
\newcommand{\FI}{\mathbb{F}}
\newcommand{\ZI}{\mathbb{Z}}
\newcommand{\IN}{\mathbb{N}}
\newcommand{\SI}{\mathbb{S}}
\newcommand{\ft}{\text{ft}}
\DeclareMathOperator{\ran}{\mathrm{ran}}
\DeclareMathOperator{\dom}{\mathrm{dom}}
\DeclareMathOperator{\del}{\partial}
\DeclareMathOperator{\Aut}{\mathrm{Aut}}
\DeclareMathOperator{\ssi}{\Leftrightarrow}
\DeclareMathOperator{\inj}{\hookrightarrow}
\DeclareMathOperator{\lra}{\leftrightarrow}
\DeclareMathOperator{\surj}{\twoheadrightarrow}
\DeclareMathOperator{\Hom}{\mathrm{Hom}}
\DeclareMathOperator{\spn}{\mathrm{span}}
\newcommand{\MK}{\mathrm{MK}}
\newcommand{\Bord}{\mathrm{Bord}}
\DeclareMathOperator{\rk}{\mathrm{rk}}
\let\Col\relax
\DeclareMathOperator{\Col}{\mathrm{Col}}
\newcommand{\sq}{$\frac 7 4$}
\newcommand{\ts}{\textsection}
\newcommand{\metr}{\mathrm{metr}}
\newcommand{\simp}{\mathrm{simp}}
\newcommand{\ip}[1]{\langle#1\rangle} % \ip{a,b} gives us <a,b>
\author{Sylvain Barr\'e}
\author{Mika\"el Pichot}
\title{Surgery on discrete groups}
\begin{document}

\begin{abstract}
We study constructions of groups, in particular of groups of intermediate rank, which are accessible to surgery techniques. 
\end{abstract}

\maketitle

\section{Introduction}

The present paper makes use of some fragments of the cobordism theory to study  countable discrete groups, especially from the point of view of geometric group theory.
The aim of the paper is to study a notion of \emph{group cobordisms}, which fit into a \emph{cobordism category} whose constituants  are 2-dimensional in nature.

Classical surgery is a topological operation on manifolds. It  has had great achievements, starting with the construction of exotic differentiable structures on the sphere of dimension 7. 
  We are looking to use surgery techniques to construct  countable  groups. A concrete such construction was given in \cite{bbelge}.
  
The classical category $\Bord_n$ of $n$-cobordisms is easy to describe in dimension $n=2$ (it is generated, as a monoidal category, by caps, pair of pants, the cylinder, and the twist pair of  cylinders, which are cobordisms between unions of circles). In the case of groups, the 2-dimensional spaces that will serve as group cobordisms are branching simplicial complexes. In essence, the dimension is $n=4$, since every finitely presented group can be written as the fundamental group of a closed manifold of dimension 4. The theory  however is not captured by $\Bord_4$. We are looking in particular for results that can be applied directly to the study of groups. The ``ST lemma'' formulated in \ts\ref{S - ST Lemma}, or Theorem \ref{T - cobordism 74}, are concrete examples of such results.   We note that  surgery theory is expected to see further developments in the direction of coarse geometry (compare \cite[\ts 4]{rosenberg2014surgery} and the references therein).

The group cobordisms are the arrows of a category whose objects are called \emph{collars}. Collars are certain 2-dimensional complexes defined in \ts\ref{S - Collars}, and are reminiscent of the usual notion of collar for Riemann surfaces. We will distinguish between metric and simplicial collars, and group cobordisms.
The ST lemma gives information (under some assumptions) on the smallest collars that can be used to construct groups of nonpositive curvature.

The main applications that we have in mind in this paper are to the construction of  groups of intermediate rank (we refer for example  to \cite{chambers} for a brief introduction to these groups). For example, we use in \ts\ref{S - type preserving constructions} the two group cobordisms described by Theorem  \ref{T - cobordism 74}  to construct infinitely many  groups of Moebius--Kantor type, in the sense of \cite{rd}. 
This can be achieved because the surgeries are performed in a controlled way. In particular, they preserve the ``type'' of the group under consideration; for instance, a surgery operation starting with a group of Moebius--Kantor type  indeed returns a group of Moebius--Kantor type . (In \cite{bbelge}, the type $\tilde A_2$ of the group is not preserved by surgery---in fact, the surgery construction explained there transforms a group with Kazhdan's property T into a group with the Haagerup property.)

The ``type'' can be defined in a simple way by listing the constraints on  the local data that define the given class of groups (see \ts\ref{S - types} for the definition---a type can be metric, or simplicial); to such a type $A$ is associated a category $\Bord_A$ of group cobordisms of type $A$, which describes the possible surgeries for the groups of type $A$. The categories $\Bord_A$ are our main objects of study.  
For the purposes of rank interpolation, the case study is the category $\Bord_{\MK}$  describing the surgeries for the groups of Moebius--Kantor type.   
We will obtain some partial information on this category in this paper.

  In the course of exploring the subject (and  fitting it to the requirements of rank interpolation),  a few concepts and facts have emerged, some of which we  mention now: a) in \ts\ref{S - model geometry}, we define a notion of \emph{model geometry}, which are the models, or building blocks, that can be used to construct any group of a given type. They will provide the simplest group cobordisms in $\Bord_A$;  b) in  \ts\ref{S - 2/3 transitive}, we discuss a notion \emph{$\frac 2 3$-transitivity} for group actions on simplicial 2-complexes. An action of a group on a simplicial 2-complex is said to be $\frac 2 3$-transitive if every triangle intersects  at most two orbits of vertices. These actions provide the simplest examples of collars; c)   the ``ST lemma'', mentioned above, is a classification of the smallest collars that can connect two model geometries of nonpositive curvature. This is a useful result in particular for the the study of groups of intermediate rank, which act on 2-complexes of nonpositive curvature  (in the CAT(0) sense); d) double covers are a useful source of explicit collars, cf.\ \ts\ref{S-double cover}. e)  surgery on  collars themselves also leads to rather exotic groups, namely the  fake double covers described in \ts\ref{S-fake cover}; f) in \ts \ref {S - 158}, we construct a group of  mixed type Moebius--Kantor and $\tilde A_2$, which illustrates constructibility questions about types raised in  \ts\ref{S - types}; g) the main result on group cobordisms is Theorem \ref{T - cobordism 74}, which concerns non filling cobordisms of type $A_\MK$, and is used to construct the new groups of Moebius--Kantor type as mentioned above (explicit drawings of group cobordisms can be found in Figures \ref{F - Cobordism 32} and \ref{F - Cobordism 2}).

The category $\Bord_A$ is a global object associated with the type $A$; a different global object,  which also contains important information about the type $A$, is introduced in \ts\ref{S - types}. It is a dynamical system called the space of complexes of type $A$, and it is a generalization (in particular to complexes of intermediate rank) of the space of triangle buildings defined in \cite{E1}, where the type $A$ was the (Coxeter) type $\tilde A_2$. A notion of ``indicability'' (existence of a surjective morphism to $\ZI$) in this space is discussed in \ts \ref{S - Global Indicability} in relation with the existence of sufficiently many $\frac 2 3 $-transitive actions for groups of type $A$. This provides finiteness information on categories such as $\Bord_{\tilde A_2}$, for which Kazhdan's property T can be used (see Theorem \ref{T - buildings 23}).  In the opposite direction, the  surgery constructions in the category $\Bord_{\MK}$ shed light on some dynamical properties of this space for Moebius--Kantor complexes. This latter point is discussed further in \ts \ref{S - Results} below, and the main results are proved in \ts \ref{S - type preserving constructions}.

The interactions between geometric group theory and category theory look promising. The present paper raises a few  general questions to which we hope to return elsewhere---for example, it would be desirable for us to a) have a deeper understanding of the categories $\Bord_A$ associated with a type $A$ (including $\Bord_{\MK}$), and b)  study  quantum invariants for groups of intermediate rank (including groups of Moebius--Kantor type) arising from topological quantum field theory  constructions over group cobordism categories.

\section{Structure of the paper and main results}\label{S - Results}

The first  objective of the paper is to define the category $\Bord_A$ of group cobordisms of type $A$.

The objects of $\Bord_A$, the collars, are topological closures of product spaces (of a graph with an open segment) inside spaces of type $A$. We describe carefully which topological closures we admit as collars in \ts\ref{S - Collars}.

Recall from the introduction that an action of a group on a simplicial 2-complex is  $\frac 2 3$-transitive if every triangle intersects at most two orbits of vertices.
Using collars and surgery we will prove:

\begin{theorem*}[See Theorem \ref{T - 23 transitive}]
  Let $A$ be a finite combinatorial type. Assume that there exist infinitely many pairwise nonconjugate free $\frac 2 3$-transitive actions  with compact quotients on simplicial complexes of type $A$.  Then there exists an indicable group of type $A$.
\end{theorem*}

This result, combined with Kazhdan's property T, implies for example that:

 \begin{corollary*}[See Theorem \ref{T - buildings 23}]
The family of groups which admit a free $\frac 2 3$-transitive action on a  Bruhat-Tits building of type $\tilde A_2$ and order $q$ is finite. 
\end{corollary*}

A more detailed study of collars is done in \ts\ref{S - ST Lemma}, where the ST lemma is proved and the span decomposition of the resulting collars is described. 

The morphisms of the category $\Bord_A$ are defined in \ts\ref{S - cobordisms} as follows:

\begin{definition*}
A \emph{group cobordism} is a 2-complex $X$ together with a pair $(C,D)$ of collars in $\Col(X)$ whose boundaries $\del^-C$ and $\del^+ D$ form a partition of the topological boundary of $X$:
\[
\del X= \del^-C\sqcup \del ^+ D.
\]
\end{definition*} 

The category $\Bord_A$ describes the possible constructions by surgery of 2-complexes (and groups) of type $A$. 

The second main objective of the paper is to begin the study these cobordism categories, starting in particular with the simplest nontrivial case of spaces of rank \sq.  We  prove that:

\begin{theorem*}[See Theorem \ref{T - cobordism 74}]
There exist precisely two non filling group cobordisms of rank \sq\ with one vertex whose boundary collars are connected of type $ST$. Furthermore, the two cobordisms are self-dual, and their collars are pairwise isomorphic,  self-dual, and of type $S$.
\end{theorem*}

This result can be used to construct infinitely many groups of Moebius--Kantor type. (Finitely many examples of such groups, up to finite index, are presented in \cite{rd}.) It also shows that the category $\Bord_{\MK}$ is infinite, which contrasts with the finiteness results of Theorem \ref{T - buildings 23}.

The construction of groups from the above theorem is described in \ts\ref{S - type preserving constructions}. As mentioned in the introduction, these constructions can also be applied to the study the dynamical system of Moebius--Kantor complexes. We now explain these results in more details. 
 
If $A$ is a type, then the space $\Lambda_{A}$ (of pointed isomorphism classes) of complexes of type $A$ is a dynamical system. It is endowed with the so-called  ``base point dynamic'' is defined by the equivalence relation $R_A\subset \Lambda_A\times \Lambda_A$, given by 
\[
(X,*)\sim_{R_A} (X',*') \ssi \exists \p\colon X\stackrel{\sim}\to X'\text{ isometry}.
\]  

The study of Euclidean buildings (of rank 2) from the point of view of dynamical systems was begun in \cite{E1,E2,Esurv}, and the present definition of $\Lambda_A$ extends these considerations to more general types. 

An open problem raised in these papers (see e.g.\ Question 7.2 in \cite{Esurv}) was whether the space $\Lambda_A$, in the case $A=\tilde A_2$ of Euclidean triangle buildings, supports a diffuse invariant measure. The original motivation was that if $\Lambda_A$ indeed admits such a measure, then it provides a new source of probability measure preserving (pmp) equivalence relations with the property T of Kazhdan---one which involves no group in the constructions. Observe in particular that the leaves defining $\Lambda_A$ are pairwise non isomorphic by definition of the base point dynamics.

It is desirable to formulate the absence of such a group in a precise way, and this leads to  \cite[Question 7.3]{Esurv}. Is there---in case an invariant measure exists---a nontrivial equivalence subrelation of $R_A$ which is the orbit partition of an essentially free action of some discrete countable group? A hypothetical such $G$ would be a ``structure group'' for the complexes of type $A$, which allows to construct pairwise non isomorphic (quasi-periodic) such complexes (namely, Euclidean buildings) by perturbing the geometry ``above $G$'' using it as a blueprint. 

The first example of a pmp equivalence relation that cannot be written as the orbit partition of an essentially free action of a countable group was obtained by Furman \cite[Theorem D]{furman1999orbit}.

The surgery constructions described above shed  light on these questions for groups of intermediate rank---of type $A_\MK $---instead of rank 2---of type $\tilde A_2$.

\begin{theorem*}[See Theorem \ref{T - 74 diffuse}]
The space $\Lambda_\MK $ supports a diffuse invariant probability measure. 
\end{theorem*}

In fact, the  dihedral group $D_\infty$ is  a structure group for the type $A_\MK $ (see Theorem \ref{T - structure group}).

\bigskip
\bigskip

\textbf{Acknowledgment.} The second author was partially supported by an NSERC discovery grant. He is indebted to Yasuyuki Kawahigashi for an invitation to the University of Tokyo in Spring 2016, where part of this work was carried out.

\toc

\section{Model geometries}\label{S - model geometry}

By 2-complex we mean a polyhedral CW-complex of dimension 2.

\begin{definition}
A  \emph{model geometry} (of dimension 2) is a connected 2-complex $M$ with a distinguished vertex (the center) satisfying the following two conditions:
\begin{enumerate}
\item every vertex in $M$ distinct from the center is adjacent to it
\item every loop  is attached to the center
\end{enumerate}
\end{definition}

Let $M$ be a model geometry.
There are two types of edges in $M$ besides the loops:
\begin{enumerate}
\item[-] the \emph{half edges}, exactly one of whose extremities coincide with the center
\item[-] the \emph{boundary edges}, none of whose extremities coincide with the center
\end{enumerate}
We always assume that the half-edges are simple, and that every boundary edge is contained in a unique face of $M$.

The \emph{core}  of $M$ is the maximal subcomplex $\Co M$ of $M$ whose vertex set contains only the center. It contains the loops and the faces attached (exclusively) to loops. Thus we have an homotopy equivalence, 
\[
M\sim \Co M,
\] 
using (2), since the boundary edges are contained in a unique face. The fundamental group $\pi_1(M)\simeq\pi_1(\Co M)$ is called the model group associated with $M$.

The \emph{link}  of $M$ is the link $\Lk M$ of its center. (Recall that the link at a point  in a 2-complex is the sphere of small radius around that point.)   

The \emph{boundary} $\del M$ of $M$ has as vertex set the vertices of $M$ distinct from the center, and as edge set the boundary edges.

 The vertex set of $\del M$ is a subset of the set of vertices of the link:
 \[
 \del^0 M\subset  \Lk^0 M.
 \]

To every edge $e$ in $\del M$ is attached a weight $w(e)$ defined by:
\begin{align*}
w(e)&:=1+\text{number of inner loops of the (unique) face $f$ containing $e$}\\  
&=|f| -2,\text{ where }|f|:=\text{number of edges of }f
\end{align*}
The weight represents the ``inner perimeter'' of the face $f$. (The addition of 1 in the definition accounts for the two inner half edges that are attached to $e$.) 

\bigskip

The simplest examples of model geometries are cones on graphs.  

\begin{example}
In the Euclidean plane tessellated by   squares, the model geometry at every vertex is a flat square (with dashed boundary in the drawing)

\begin{center}
\includegraphics[width=3cm]{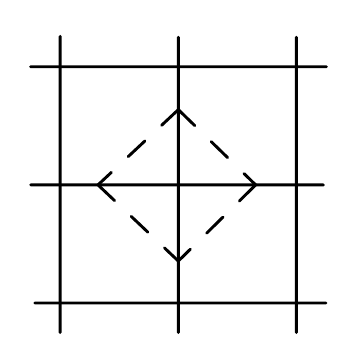} 
\end{center}
with 4 half edges and 4 boundary edges. 
\end{example}

Model geometries provide an explicit source of group cobordisms, and will be used as such later in the paper.  

\bigskip

We say that a face in a 2-complex $X$ is \emph{crossing} if it contains at least 2 distinct vertices of $X$. Every 2-complex can be decomposed into model geometries and crossing faces as follows.

Let $r$ be a crossing face in $X$. The vertex set of  $r$ can be partitioned according to the equivalence relation generated by the relation 

\begin{center}
``being adjacent in $r$ and corresponding to the same vertex of $X$''. 
\end{center}

For example, in the following figure

\begin{center}
\includegraphics[width=5cm]{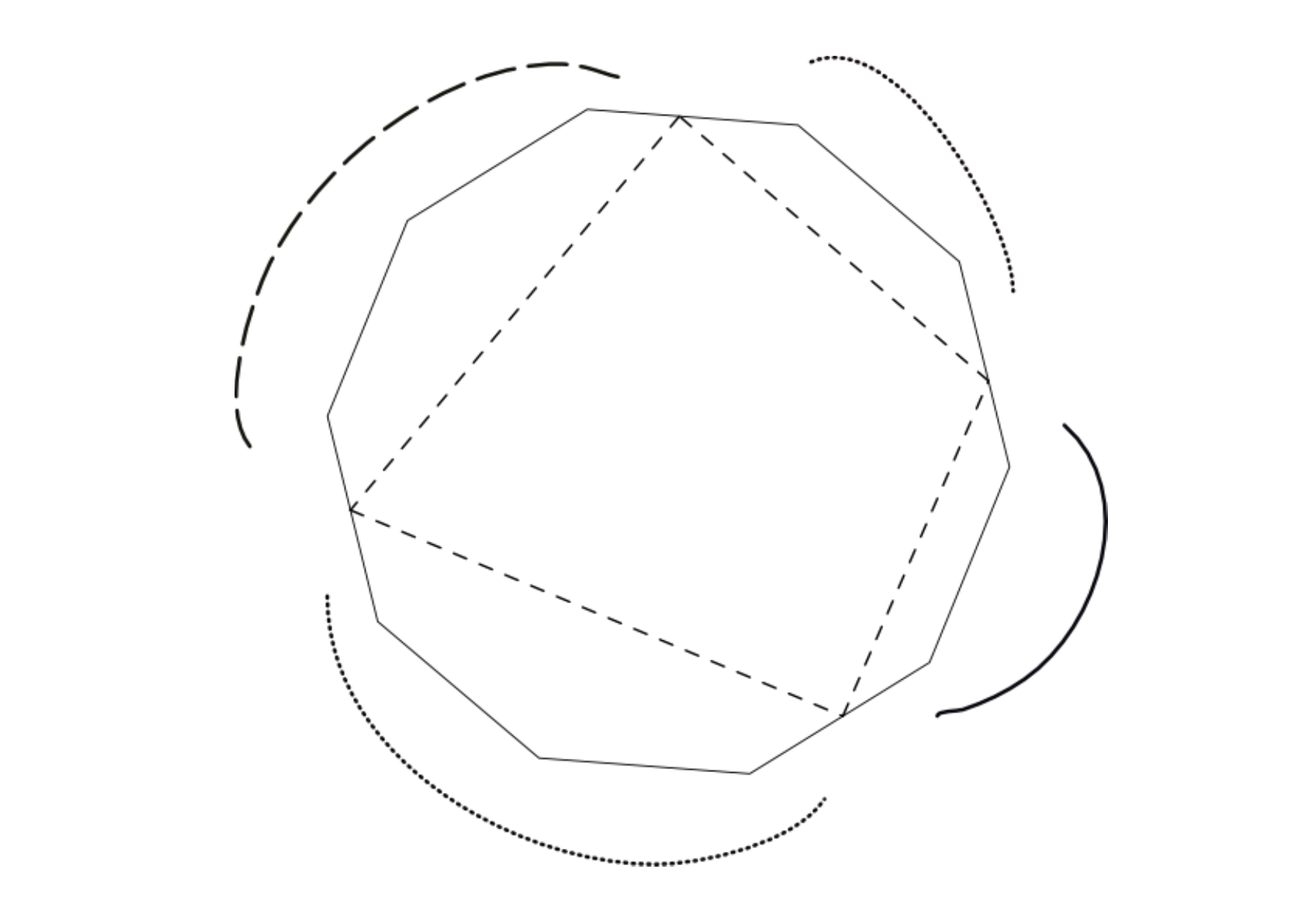} 
\end{center}

\noindent we have a face with 10 vertices and four classes of vertices, and two of these classes (represented with dots) belonging to the same vertex of $X$.

The ``middle points'' of every edge $e$ of $r$ whose extremities are inequivalent form the vertex set of a polygon (with filled interior) inscribed in $r$, which is represented with a dashed boundary in the figure. 

Assume that $X$ is finite. Let $r_1,\ldots, r_n$ denote the crossing faces of $X$ and $s_1,\ldots, s_n$ denote the respective inscribed polygons. 

\begin{definition}\label{D - model geometry} The model geometry of $X$ at $x$ is the 2-complex
\[
M_x = \bigcup_{i=1}^n (r_i\setminus s_i)_{|x}
\] 
where $r_i\setminus s_i$ denotes the pieces outside $s_i$ and $_{|x}$ selects the components corresponding to $x$ (which may not be connected). The fundamental group 
\[
G_x:=\pi_1(\Co M_x)\simeq\pi_1(M_x)
\] 
is called the model group of $X$ at $x$. 
\end{definition}

The 2-complex $X$ can be reconstructed in an obvious way as a quotient 
\[
X \simeq\left (\bigsqcup_{x\in X^0} M_x \sqcup \bigsqcup_{i=1}^n s_i  \right) /\sim
\]
where $s_i$ is a (filled) polygon inscribed in the crossing relation $r_i$, possibly reduced to a segment, and the relation $\sim$ attaches the pieces  
 together along their boundaries.

Furthermore, the following combinatorial relation holds:
\[
\sum_{r\in X^2} |r|=\sum_{x\in X}^k\sum_{e\in \del M_x} w(e)\ \ \ \  \ \ \ \text{(weight equation)}.
\]
(This corresponds to two ways of counting the number of edges of the crossing faces in $X$.)

Note that in the decomposition
\[
X \simeq\left (\bigsqcup_{x\in X^0} M_x \sqcup \bigsqcup_{i=1}^n s_i  \right) /\sim
\]
the quotient 2-complex structure on the right is a subdivision of the 2-complex structure of $X$. We call the map $M_x\inj X$ a \emph{germ embedding} of $M_x$ in $X$. It is not, strictly speaking, a 2-complex complex map, but can be seen as one once $X$ is endowed with the quotient structure coming from the quotient structure on the right.

The above discussion holds for infinite complexes, if one includes infinite families of model geometries and shapes, and leaves out the weight equation.

We are especially interested, in particular for the purpose of rank interpolation, in metric versions of this decomposition.

By metric 2-complex we mean that the edges and faces are endowed with a compatible metric (and the attaching maps are isometries), and  we shall assume (for simplicity) that the faces are flat, by which we  mean isometric to a Euclidean disk with a polygonal boundary, which is strictly convex in the sense that the inner angle at every vertex is $<\pi$. We say that a metric 2-complex is nonpositively curved if it satisfies the link condition  condition \cite{gromov1996geometric} (which is to say that the girth of its links is $\geq 2\pi$ with respect to the angular metric).

If $X$ is a metric 2-complex, the model geometry $M_x$ at a vertex $x$ of $X$, in the sense of Definition \ref{D - model geometry},  is endowed with the induced metric, where by definition the half-edges extend up to the middle points of edges, and the boundary edges and faces are endowed with the metric induced from the face of $X$ they belong to.

\begin{definition}
Two metric model geometries $M_1$ and $M_2$ are  \emph{isomorphic} if they are ``germ isometric'' in the sense that there exist open sets $V_1$ and $V_2$, respectively containing the cores of $M_1$ and $M_2$, and an isometry $V_1\stackrel\simeq\to V_2$. Such an isometry is called an isomorphism (a ``germ isometry'') from $M_1$ to $M_2$. 
\end{definition}

Note that isomorphisms be composed in the obvious way (restricting domains and ranges appropriately, since the intersection of two neighbourhood of the core remains a neighbourhood of the core), which defines a category (in fact, a groupoid) of metric model geometries. The isomorphism classes of this groupoid are the ``germs'' of metric model geometry. A set of model geometries is finite if the set of germs (the quotient space of the groupoid) is finite.  This is the case for geometries of  finite type as discussed in the forthcoming section.  

We note that it is possible for a given 2-complex to be the core of two non-isomorphic model geometries.

\section{Types} \label{S - types}

We are interested in surgery constructions that preserve the ``local data'' attached to a given group or space. 
There are various ways to prescribe the local data, and we work with the following notion.

\begin{definition}\label{D - types}
A  \emph{combinatorial type}  (in dimension 2) is
\begin{enumerate}[a)]
\item a set of graphs, and 
\item a set of shapes.
\end{enumerate}
Similarly, a \emph{metric type}  (in dimension 2) is
\begin{enumerate}[a)]
\item a set of metric graphs, and 
\item a set of flat shapes. 
\end{enumerate}
\end{definition}

With this notion of type, the category $\Bord_A$ associated with a type $A$ describes surgeries that preserve both the links and the set of shapes. 

In the above definition the convention is that graphs have no orientation and that multiple edges are allowed. By ``shape'' we mean a disk with a fixed polygonal boundary, considered up to simplicial homeomorphism. 
By ``flat shape'' we mean a convex polygonal disk in the Euclidean plane $\IR^2$ endowed with the induced metric, up to isometry.

\begin{example}[combinatorial types]\mbox{}
\begin{enumerate}
\item  ``2-complexes'': \begin{enumerate}[a)]\item all graphs \item   all shapes\end{enumerate}
\item  ``simplicial 2-complexes'': \begin{enumerate}[a)]\item all graphs \item $\{\bigtriangleup\}$ (one triangle)\end{enumerate}
\item  ``simplicial surfaces'': \begin{enumerate}[a)]\item $\{n$-gons for all $n\geq 2\}$ \item $\{\bigtriangleup\}$ \end{enumerate} 
\end{enumerate}
\end{example}

\begin{example}[Metric types]\mbox{}
 \begin{enumerate}
 \item  type $\tilde A_2$ (see \cite{ronan1989lectures}): \begin{enumerate}[a)]\item incidence graphs of finite projective planes, all edges have length $\pi/3$ \item one equilateral triangle \end{enumerate} 
 \item  type $A_\MK$  (which describes the Moebius--Kantor groups, see \cite[\ts 4]{rd}): \begin{enumerate}[a)]\item the Moebius-Kantor graph, all edges have length $\pi/3$ \item one equilateral triangle  
 \end{enumerate} \end{enumerate} 
 \end{example}

This definition of type is flexible enough to include the groups of intermediate rank, and restrictive enough to make the description of the group cobordism category of a given type a non trivial task in general.
(The standard types for Bruhat--Tits buildings, in the Euclidean rank 2 case, namely, $\tilde A_2$, $\tilde B_2$ and  $\tilde G_2$, can also be defined  in the same way, as is the type  ``Tits geometries'', using a) $\{$spherical buildings$\}$ and b)  $\{\bigtriangleup\}$.)

\begin{definition}
A type is \emph{finite} if the two sets a) and b) are finite, when considered up to isomorphism (up to isometry in the metric case), and
 every graph in a) is finite.
\end{definition}

Every 2-complex $X$ is natually affiliated with a type $A_X$, called the type of $X$, and defined by 
\begin{enumerate}[a)]
\item the set of links at vertices in $X$  not in the topological boundary of $X$, and 
\item the set of shapes $X$ contains
\end{enumerate} 
This is a metric type if $X$ is metric, where the links are endowed with the angular metric. 

\begin{remark}
  Although we will not need this variation in the present paper, it would be natural to also take into account the angles in the definition of types, in order to force certain angles in the set of shapes to match with certain edges in the set of  links. 
\end{remark}

We will say that:
\begin{itemize}
\item[-] $X$ is \emph{of strict type $A$} if $A_X=A$
\item[-] $X$ is \emph{of type $A$} if $A_X\subset A$. 
%\item[-] $X$ is \emph{of embedded type $A$} if it can be embedded (homeomorphically in the simplicial case, and isometrically in the metric case) in a complex of type $A$.
\item[-] $X$ is \emph{of finite type} if $A_X$ is a finite type. 
\end{itemize}

Note that one can consider abstractly the union of two types. For example, one defines the metric type  $A_\MK+\tilde A_2$ as follows: 
\begin{enumerate}[a)]
\item $\{$incidence graph of the Fano$\}\cup\{$Moebius-Kantor graph$\}$, and 
\item one equilateral triangle. 
\end{enumerate}

\begin{definition}
Let $A$ be a type. The \emph{space of complexes of type $A$} is defined by
\[
\Omega_A:=\text{the set of isomorphism classes of }
\]
\[
\text{connected complexes of type } A\text{ without boundary}
\]
\end{definition}

 The space of complexes of finite types is:
\[
\Omega_\ft:=\text{the set of isomorphism classes of }
\]
\[
\text{connected complexes of finite type without boundary}
\]
It is filtered by the lattice (with respect to inclusion) of finite types: if $A\subset B$ are finite types then $\Omega_A\subset \Omega_B\subset \Omega_\ft$.

Note that there are countably many combinatorial types, and uncountably many metric types up to type isomorphism. Accordingly, there is a metric and a simplicial space of complexes of finite types, together with a forgetful map $\Omega_{\ft}^\metr\surj \Omega_{\ft}^\simp$ whose fibres describe the possible metrizations.

The so-called pointed Gromov--Hausdorff topology provides a natural topology on  ``desingularized'' versions $\Lambda_A$ of $\Omega_A$: 
\begin{center}
\begin{tikzpicture}
\matrix (m) [
matrix of math nodes,
row sep=.3cm,
column sep=.7cm,
% text height=1.5ex,
% text depth=0.25ex
] {
\Lambda_A \\
\Omega_A\\
};
\path[->] (m-1-1) edge node[above]{} (m-2-1);
\end{tikzpicture}
\end{center}
It is easy to find such ``desingularizations'' of $\Omega_A$  by  \emph{marking} the complexes. Thus, marking at vertices gives 
\[
\Lambda_A:=\text{the set of base point preserving isomorphism classes of pairs }(X,*)
\] 
where $X$ runs over the complexes of type  A and $*$ (the base point) runs over the vertices of $X$.
One can also choose larger balls or compact sets as ``base points'', if one wants to add local control to the isotropy groups.   

One checks readily (by a standard diagonal argument) that 

\begin{proposition}
If $A$ is a finite type (metric or simplicial), then $\Lambda_A$ is a compact Hausdorff space.
\end{proposition}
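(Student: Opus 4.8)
The plan is to prove compactness via a diagonal/sequential-compactness argument in the pointed Gromov--Hausdorff topology, exploiting that a finite type $A$ imposes uniform local finiteness on the complexes it parametrizes. First I would unwind what finiteness of $A$ buys us: since the set of (metric) graphs in a) is finite up to isomorphism (isometry) and each such graph is finite, there is a uniform bound $N$ on the number of vertices of any link occurring in a complex of type $A$; likewise the set of shapes in b) is finite, so there is a uniform bound on the number of edges of any face and (in the metric case) finitely many possible edge lengths and face isometry types. Consequently every complex $X$ of type $A$ has uniformly bounded vertex degrees and uniformly bounded face sizes, hence the pointed ball $B_X(*,R)$ around any base point has cardinality bounded by a function $\beta(R)$ depending only on $A$ and $R$, not on $(X,*)$.

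Next I would set up the sequential argument. Let $(X_n,*_n)$ be a sequence in $\Lambda_A$. For each radius $R\in\NI$, the combinatorial balls $B_{X_n}(*_n,R)$, decorated with their metric data (edge lengths, face shapes, and the gluing pattern), range over a \emph{finite} set of isomorphism types, because of the uniform cardinality bound $\beta(R)$ together with the finiteness of the available local decorations. By a diagonal extraction I can pass to a subsequence along which, for every fixed $R$, the pointed balls $B_{X_n}(*_n,R)$ are all isomorphic (as pointed metric 2-complexes) for $n$ large, and moreover compatibly so as $R$ grows: the stabilized ball of radius $R$ includes isometrically into the stabilized ball of radius $R+1$. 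Taking the direct limit of this tower of pointed inclusions produces a connected pointed 2-complex $(X_\infty,*_\infty)$. One then checks that $X_\infty$ again has type $A$ (its links and shapes are limits of those of the $X_n$, hence already appear in the finite sets a) and b)) and has no boundary (the base-point balls were taken in boundaryless complexes and the defining local condition passes to the limit), so $(X_\infty,*_\infty)\in\Lambda_A$, and that $(X_n,*_n)\to(X_\infty,*_\infty)$ in the pointed Gromov--Hausdorff topology. This gives sequential compactness.

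To upgrade to compactness as a topological space and to Hausdorffness, I would argue as follows. Hausdorffness: two non-isomorphic pointed complexes $(X,*)\neq(Y,\star)$ must already be distinguished by some finite ball $B(\cdot,R)$, and ``having a prescribed pointed $R$-ball'' is an open condition, so these opens separate the two points. Compactness: $\Lambda_A$ is metrizable — a standard metric is $d\bigl((X,*),(Y,\star)\bigr)=\inf\{2^{-R}: B_X(*,R)\cong B_Y(\star,R)\text{ as pointed metric complexes}\}$ (or the $2^{-R}$-style pointed GH metric) — so sequential compactness is equivalent to compactness, and the previous paragraph finishes the proof. In the simplicial case the argument is identical with ``metric'' deleted throughout.

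The main obstacle, and the place where the finiteness of $A$ is genuinely used, is the claim that for each $R$ only finitely many pointed $R$-balls occur: one must be careful that ``bounded cardinality plus finitely many local decorations'' really does yield finitely many \emph{isomorphism types} of pointed decorated complexes, and — in the metric case — that the finitely many admissible edge lengths and flat shapes prevent any continuous moduli from creeping in (this is exactly why the definition of finite type demands the graph and shape sets be finite up to isometry). A secondary technical point is verifying that the direct limit $X_\infty$ of the stabilized balls is well-defined and that convergence holds in the chosen topology; this is routine once the tower of pointed isometric inclusions is in place, but it does require checking that the isomorphisms realizing stabilization can be chosen compatibly across radii, which is where the diagonal extraction (rather than a naive per-radius extraction) is essential.
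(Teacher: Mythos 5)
Your proposal is correct and is essentially the paper's own argument: the paper simply asserts the proposition ``by a standard diagonal argument,'' and your write-up (uniform local finiteness from the finiteness of $A$, finitely many pointed $R$-balls up to isomorphism, diagonal extraction and a limit complex of type $A$, separation of points by finite balls) is exactly that argument carried out in detail.
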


\begin{remark} 
 If $\tilde A_{2,q}$ denotes the type ``$\tilde A_2$ and order $q$'', then $\Omega_{\tilde A_{2,q}}^\text{sc}$, namely the subset of $\Omega_{\tilde A_{2,q}}$ consisting of simply connected spaces,  
coincides with the space of triangle buildings $E_q$ from \cite{E1,E2,Esurv}.      
\end{remark}

Since model geometries are in particular 2-complexes, they have a well-defined affiliated type (both combinatorial and metric). However, if a model geometry $M$ sits in an ambient 2-complex $X$, the type of $M$ differs from that of $X$ (due to the presence  of additional  faces  lying outside  the core).  We shall refer to the type of a 2-complex $X$ in which  $M$ embeds  as an \emph{embedded type} for $M$.

 A  type, if finite,  can be ``precomputed'', in the sense that one can give (at least in principle), an exhaustive list of all the model geometries it contains, thereby listing the basic building blocks for groups of a this type:
  
 \begin{proposition}
 If $A$ is a finite (combinatorial or metric) type, then the set of model geometries of embedded type $A$ is finite up to  isomorphism. 
 \end{proposition}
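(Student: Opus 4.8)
The plan is to reduce the statement to a finite enumeration: a model geometry $M$ of embedded type $A$ will be determined, up to isomorphism, by a bounded amount of combinatorial data — together with metric data in the metric case — each item of which ranges over a finite set depending only on $A$.

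First I would make the hypothesis explicit. If $A$ is an embedded type for $M$, then $M$ germ-embeds into a complex $X$ of type $A$; the case of interest is that in which $X$ has no boundary, so that $M$ is isomorphic to the model geometry $M_x$ at a vertex $x$ of $X$ not lying in the topological boundary — the germ embedding restricts to an isometry of a neighbourhood of $\Co M$ onto a neighbourhood of $x$, and model geometries are classified by such germ isometries. This yields two facts: (i) the link $\Lk M \cong \Lk_X(x)$ is one of the finitely many (metric, resp.\ simplicial) graphs of $A$; and (ii) every face of $M = M_x$ is a piece $(r\setminus s)_{|x}$, obtained by cutting a face $r$ of $X$ along the polygon $s$ inscribed via the crossing relation of $r$, and so has the centre among its vertices.

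From (i), the number of edges of $M$ incident to the centre is at most the number of vertices of $\Lk M$; since every vertex of $M$ other than the centre is joined to it by a half-edge, it follows that $M$ has boundedly many vertices, loops and half-edges. From (i) and (ii), each face of $M$ contributes at least one corner at the centre, hence at least one edge of $\Lk M$, so $M$ has at most as many faces as $\Lk M$ has edges; and since each boundary edge lies in a unique face and each face has boundedly many edges, the number of boundary edges is bounded as well. From (ii) and the finiteness of $A$, the shape $r$ ranges over the finitely many shapes of $A$ and its crossing relation over the finitely many partitions of its finitely many vertices, so the faces occurring in $M$ — with their flat metrics, in the metric case — belong to a finite list, and the edge lengths of $M$, each being the length of an edge of such a face or half of one, lie in a finite set. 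Finally, $M$ is reconstructed by gluing these boundedly many faces around the centre in the pattern recorded by $\Lk M$ (and along the shared loops and half-edges), subject to the defining conditions of a model geometry and, in the metric case, to metric compatibility; there are only finitely many such reconstructions, which proves the proposition.

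The step I expect to require the most care is the first one: one must be precise about what ``embedded type'' should mean, so as to guarantee that $M$ arises as $M_x$ at a vertex of a type-$A$ complex not lying in its topological boundary, which is what forces $\Lk M$ to be one of the graphs listed in $A$. This is the only place infiniteness could enter; once the link is controlled, the inequality ``(number of faces at a vertex) $\le$ (number of edges of its link)'' supplies all the quantitative content and the rest is bookkeeping.
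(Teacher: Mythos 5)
Your argument is correct and follows essentially the same route as the paper's: both deduce finiteness from the fact that a finite type bounds the local data at the centre (the link is one of finitely many graphs, each face is one of finitely many pieces cut from the shapes of $A$ along inscribed polygons, so only boundedly many faces, edges and gluings can occur). The paper packages this as finiteness of the closed stars $\overline M_x$ together with finiteness of embeddings of $M$ into some $\overline M_x$, whereas you bound the data of $M$ directly; this is a presentational difference, with your version merely making explicit the counting the paper leaves implicit.
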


 \begin{proof}
 For $X$ of type $A$ and $x\in X$ a vertex, let $\overline M_x$ denote the minimal subcomplex containing all faces containing $x$. Since $A$, either combinatorial or metric,  is finite, the set of all complexes $\overline M_x$ when $X$ runs over the complexes of type $A$ and $x$ over the vertices of $X$ is finite, respectively up to simplicial isomorphism or isometry. If $M$ is a model geometry of embedded type $A$, it embeds   in $\overline M_x$ for some $x\in X$. There are only finitely many such embeddings up to  isomorphism.   
 \end{proof}
 
 In practice, the precomputation of all model geometries can only be achieved  for the smallest types (a computer program can be written that outputs the model geometries for a given type, but we do not have an efficient algorithm---see \cite[\ts 4]{rd}). 
 
 Furthermore,  even when if given type can be fully precomputed, the question remains to understand what complexes can be built from this finite list of model geometries.  The decomposition 
\[
X \simeq\left (\bigsqcup_{x\in X^0} M_x \sqcup \bigsqcup_{i=1}^n s_i  \right) /\sim
\]  
described in \ts\ref{S - model geometry}  formulates  the issue, but by no means addresses it. In a \ts\ref{S - 2/3 transitive}, we  introduce a notion of $\frac 2 3$-transitivity,  that goes one step further.  

These considerations raise several  questions. One is to find general conditions on a given type $A$ that insures the existence of sufficiently many compact complexes (in particular, at least one) of strict type $A$.  For example, if $A$ and $B$ are two given simplicial types, is there always a compact connected  2-complex of strict type $A + B$ provided that there are such complexes of  strict types $A$ and $B$, respectively? This certainly requires the types $A$ and $B$ to be ``compatible'', in a combinatorial sense. For the two types $A=\tilde A_2$ and $B=A_\MK $, which are combinatorially compatible, this question is discussed in \ts  \ref{S - 158}, and the resulting spaces are typical examples of  ``spaces of intermediate rank'', in the spirit of \cite{rd}.
Another question is the accessibility problem raised in \ts \ref{S - cobordisms}.

\section{Collars}\label{S - Collars}

\begin{definition}
An \emph{open collar} is a topological space of the form $H\times (0,1)$ where $H$ is a graph (not necessarily connected).
\end{definition}

  In the metric case, $H\times (0,1)$ is also assumed to be endowed with a metric, which may not be a product metric, but  fibers over the graph $H$. 
    The graph $H$ is called the \emph{nerve} of the collar. (From the point of view of topological quantum field theory, the assumption that the nerve is a graph corresponds to restricting our attention to ``closed strings''---it is possible but more complicated to also include ``open strings'' in the discussion.)

\begin{definition}
An open collar in a 2-complex $X$ is an embedding $C\colon H\times (0,1)\inj X$.
\end{definition} 

We shall refer to the domain $H\times (0,1)$ as the abstract collar defining  $C$. The  \emph{dual} of an open collar is the open collar $C'\colon H\times (0,1)\inj X$ defined by $C'(x,t):=C(x,1-t)$.

\begin{definition}
Let $C$ be an open collar in a 2-complex $X$. The \emph{collar closure} of $C$  the topological closure $\overline C$ of the image of $C$ in $X$.
\end{definition}

In general, collar closures are not homeomorphic to product spaces. 

\begin{definition}
The \emph{span} of a collar $C$ in  $X$ is the set $\spn(C)$  of vertices of $X$ contained in collar closure of $C$.
\end{definition}

Let $C$ be an open collar in a 2-complex $X$. The \emph{simplicial closure} of $C$ is  is the union of  all the open edges and open faces it intersects.  

We shall only consider collars in $X$ that are:
\begin{enumerate}
\item simplicially closed, in the sense that the image of the map $C$ coincide with the simplicial closure, and
\item vertex free, in the sense that they do not intersect vertex set of $X$.
\end{enumerate}
From now on, by collar in a 2-complex $X$, we refer to open collars $C\colon H\times (0,1)\to X$ that satisfies these two conditions. 

\begin{definition}
We say that two collars $C\colon H\times (0,1)\to X$ in $X$ and $C'\colon H\times (0,1)\to X$ $X'$ are isomorphic if there is  a simplicial isomorphism $\varphi\colon \overline C\to \overline C'$ such that $\varphi\circ C=C'$.     
\end{definition}

 In the metric case, the collars in $X$ are naturally endowed with the induced metric (for which the map $C$ is an isometry), and we further assume  in the previous definition  that the simplicial isomorphism  is a simplicial isometry.

\begin{example}\label{Ex - cayley} A collar $C$ in a 2-complex $X$ is \emph{conical} if it is a cone over its nerve. 
 Vertex neighbourhoods provide simple examples of conical collars. For instance, let $r$ be a face of perimeter $n$, and consider the $n$ triangle faces associated with the barycentric subdivision of $r$. Then the union of all open triangles associated with these relations together with their open boundary edges, form a  collar with corresponding abstract collar $\SI^1\times (0,1)$.
\end{example}

A  collar $C$ in $X$ with nerve $H$ and collar closure $\overline C\subset X$ can be extended to a map

\begin{center}
\begin{tikzpicture}
\matrix (m) [
matrix of math nodes,
row sep=.7cm,
column sep=.7cm,
% text height=1.5ex,
% text depth=0.25ex
] {
H\times [0,1]\\
\overline C \\
};
\path[->] (m-1-1) edge node[left,above] {}(m-2-1);
\end{tikzpicture}
\end{center}
\noindent The images of $H\times \{0\}$ (resp.\ $H\times \{1\}$) in $\overline C$ are subgraphs of the 1-skeleton of $X$  denoted $\del^- C$ and $\del^+C$ respectively. 

\begin{definition}
We call $\del^-C$ (resp. $\del^+C$) the left (resp.\ right) boundary of $C$.
\end{definition}

Observe that in general the graphs $\del^- C$ and $\del^+ C$ are not isomorphic to the nerve. In fact, they need not be homotopy equivalent to it. Edges in $H$ may disappear, or may be turned into bouquets of circles.  

\begin{definition}\label{D - boundary-injective}
We say that a collar $C$ in $X$ is \emph{boundary injective} if no edge in $\del^-C$ (resp.\ $\del^+C$) belongs to two faces of $C$. 
\end{definition}

\begin{definition}
We say that a collar $C$ in $X$ has \emph{disjoint boundary} if $\del^-C$ and $\del^+C$ are disjoint subsets of $X$.
\end{definition}

\begin{definition}
We say that a collar $C$ in $X$ is \emph{full} if it is {open} as a subset of $X$. 
\end{definition}

(Therefore, if $C$ is a full collar in $X$, then all disk adjacent to an open edge $e$ of $C$ are contained in $C$.)

 \begin{definition}\label{D - h -collar}
We say that a collar $C$ in $X$ is an \emph{h-collar} if the maps $[0,1] \ni t \mapsto C(\cdot, t)$ is a homotopy equivalence between $\del^-C$ and $\del^+C$.   
\end{definition}

 \begin{definition}
 If $X$ is a 2-complex, we write $\Col(X)$ for the set of collar of $X$ which are full with disjoint boundary. 
\end{definition}

In this paper we shall only consider collars which are nonconical.

\begin{example}
Assume that $G\acts X$ acts freely with exactly two orbits of vertices. Let $\Co M$ and $\Co N$ denote the (closed) cores of the two model geometries $M$ and $N$ in $X/G$. Then $C:=X/G\setminus (\Co M\cup \Co N)$ is a collar in $\Col(X/G)$. 
\end{example}

\begin{definition}
If $A$ is a type, we let $\Col _A$ be the set of all collars in $\Col(X)$, considered up to isomorphism, where $X$ runs over the 2-complexes of type $A$. 
\end{definition}

The set $\Col _A$ will serve as the object set of the group cobordism category $\Bord_A$.

\bigskip

Observe that in general (even in the metric case) the embedding
 \[
C\colon   H\times (0,1)\to X
 \] 
does not embed the graph $H\times \{t_0\}$, for some $t_0\in (0,1)$, as a totally geodesic subset of $X$. 

\begin{definition}
We say that a metric collar $C$ in  $\Col(X)$ is \emph{totally geodesic collar}  if $C(H, t_0)$ is a totally geodesic subset of $X$ for some $t_0\in (0,1)$. 
\end{definition}

Totally geodesic collars provide more options for metric surgery, including, for example, blow-ups of the graph $C(H,t_0)$ into a metric product $C(H,t_0)\times [0,1]$  (``collar dilatation'') that preserves non positive curvature (but possibly alters the metric type of the complex).

\begin{example}
The union of all bowties and losenges for the group $\G_\bowtie$ considered in \cite{bbelge} defines a totally geodesic collar. 
\end{example}

Associated with $A$ is a set $\Ner_A$ of topological (as opposed to metric) graphs, called the nerve space of $A$, which is the image of $\Col_A$ under the nerve map $C\mapsto H$. The latter provides a forgetful map 
\[
\Col_A\to \Ner_A.
\]
Note that  even in the metric case, the graphs in $\Ner_A$ need not be endowed with a natural metric, except in a few cases:

\begin{definition}
A type $A$ \emph{splits} if a metric can be found on every graph in $\Ner_A$ such that every elements in $\Col_A$ is a metric product, for a suitable metric on $(0,1)$.
\end{definition}

\begin{example}\label{E - square split type} 
The type $A$ defined by
\begin{enumerate}[a)]\item 4-cycles with edges of length $\pi/2$ \item   Euclidean unit square\end{enumerate}
splits. The nerve space $\Ner_A$ consists of metric graphs which are finite disjoint unions of circles of integer length.  
\end{example}

\section{$\frac 2 3$-transitivity}\label{S - 2/3 transitive}

Let $X$ be \emph{simplicial}  of dimension 2.

\begin{definition}
A group action $G\acts X$ is \emph{$\frac 2 3$-transitive} if  every (triangle) face of $X$ intersects at most two orbits of vertices.   
\end{definition}

Every vertex-transitive action is $\frac 2 3$-transitive, as is every action with two orbits of vertices. 

There are easy examples with arbitrarily large compact quotients $X/G$. 
(Let $X$ be the Euclidean plane tessellated by equilateral triangles, then the three simplicial actions $\ZI\acts X$ translating the three directions are $\frac 2 3$-transitive.) 

Furthermore, every group admits a $\frac 2 3$-transitive action on a simplicial complex of dimension 2, and, if finitely presented, a $\frac 2 3$-transitive action with compact quotient.
(Let $G$ be a group and $X$ be a Cayley complex for $G$, let $X_{\bigtriangleup}$ be a barycentric subdivision of $X$, then the action $G\acts X_{\bigtriangleup}$ is $\frac 2 3$-transitive.)

 However, for every finite type $A$ as defined in \ts \ref{S - types}, $\frac 2 3$-transitivity is related  to some global form of indicability (existence of an infinite abelian quotient  $G\surj \ZI$). 
 While not being strict indicability in the usual sense, this notion can still be profitably combined with some uniform version of the standard properties, such as uniform property T (compare \cite{Esurv}).  
 
  We have, for instance, that:
 
 \begin{theorem}\label{T - buildings 23}
The family of groups which admit a free $\frac 2 3$-transitive action on a  Bruhat-Tits building of type $\tilde A_2$ and order $q$ is finite. 
\end{theorem}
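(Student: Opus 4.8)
The plan is to show that freeness plus $\frac 2 3$-transitivity force the quotient $X/G$ to be very small, and then invoke rigidity of $\tilde A_2$ buildings. First I would recall that if $X$ is a Bruhat--Tits building of type $\tilde A_2$ and order $q$, then $X$ is a thick $\tilde A_2$ building, hence its automorphism group acts with finitely many orbits on cells, and more importantly the links of vertices are the incidence graph of the projective plane of order $q$ (a generalized triangle), which is a highly homogeneous bipartite graph. The key local observation is that in such a building every vertex has a ``type'' (the three colours of the Coxeter diagram $\tilde A_2$), and every triangle face contains exactly one vertex of each type. If $G\acts X$ is $\frac 2 3$-transitive, then every triangle meets at most two $G$-orbits of vertices; I would first argue that this is incompatible with $G$ preserving the type colouring and having three orbits, so that the action must in some triangles identify vertices of different Coxeter types into a single orbit. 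Since $G$ acts freely, the quotient $X/G$ is itself a finite complex whose universal cover is $X$, so $G=\pi_1(X/G)$ and $X/G$ is a finite $K(G,1)$; $\frac 2 3$-transitivity says $X/G$ has at most two vertices (because a triangle downstairs lifts to a triangle upstairs meeting at most two orbits, and $X$ being a building every triangle of $X/G$ is covered by a genuine triangle).

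So the heart of the argument is: there are only finitely many finite 2-complexes $Y$ with at most two vertices whose universal cover is a Bruhat--Tits building of type $\tilde A_2$ and order $q$, as $q$ varies. With at most two vertices, the number of edges and triangles of $Y$ is controlled by the local structure: the link of each vertex of $Y$ is a quotient of the incidence graph $I_q$ of the projective plane $\mathrm{PG}(2,q)$, and in particular has $2(q^2+q+1)$ vertices and valence $q+1$. A two-vertex complex with links of bounded combinatorial type and a building as universal cover has only finitely many possibilities once $q$ is fixed; the point is to bound $q$ itself. Here is where I expect to use a genuinely building-theoretic (and in fact property (T)-flavoured) input, as the paper itself signals by the reference to \cite{Esurv} and uniform property T: a group acting freely cocompactly on a thick $\tilde A_2$ building has property (T) with a Kazhdan constant bounded below uniformly in $q$, hence a bounded minimal number of generators, and the Euler-characteristic/Nagao-type count $\chi(Y) = V - E + F$ together with the relation between $E,F$ and $q$ (from the weight equation of \ts\ref{S - model geometry}, or directly from counting chambers in the links) forces $q$ to lie in a finite range. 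Alternatively, one can use that such $G$ embeds as a lattice in $\mathrm{PGL}_3$ of a local field of residue degree tied to $q$, and a two-orbit (or one/two-vertex quotient) lattice is arithmetically constrained; I would pick whichever of these is cleanest.

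Concretely the steps are: (1) reduce, using freeness, to: $X/G$ is a finite aspherical 2-complex with universal cover the building, and $\frac 2 3$-transitivity $\Leftrightarrow$ $X/G$ has $\le 2$ vertices; (2) analyse the link of a vertex of $X/G$: it is a connected graph finitely covered by $I_q$, so it has $\le 2(q^2+q+1)$ vertices, all of valence $q+1$, and girth $6$; (3) count: the number of triangles of $X/G$ equals (number of chambers in the links)$/6$, giving $F$ as an explicit function of $q$ and of the covering data, and similarly $E$; (4) apply the cocompact-lattice-in-$\mathrm{PGL}_3$ structure, or property (T) with uniform Kazhdan constant, to bound the number of generators of $G=\pi_1(X/G)$ hence $E$, hence $q$; (5) for each of the finitely many admissible $q$, there are finitely many two-vertex complexes with the prescribed links, and finitely many buildings of type $\tilde A_2$ and order $q$ only up to the relevant equivalence is not needed---we only need finitely many groups---so conclude. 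The main obstacle is step (4): turning ``$\le 2$ vertices in the quotient'' into an honest bound on $q$. Everything else is bookkeeping with links and Euler characteristics; the arithmetic/representation-theoretic finiteness of such small-quotient lattices in rank-two $p$-adic groups, or the uniform property (T) bound, is the one external ingredient I would need to pin down precisely.
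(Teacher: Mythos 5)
Your reduction in step (1) is where the argument breaks: $\frac 2 3$-transitivity does \emph{not} say that $X/G$ has at most two vertices. It says that each individual triangle meets at most two $G$-orbits of vertices; the total number of vertex orbits can be arbitrary. The paper points this out explicitly in \S\ref{S - 2/3 transitive} (``there are easy examples with arbitrarily large compact quotients''), and indeed every finitely presented group acts $\frac 2 3$-transitively cocompactly on the barycentric subdivision of its Cayley complex. So there is no a priori bound on the size of $X/G$, the link of a vertex of $X/G$ is just a copy of the building's link (the action is free), and all of your subsequent bookkeeping (steps (2)--(3)) rests on a false premise. A second misreading compounds this: in the theorem $q$ is \emph{fixed} (see the remark following it, which asks separately how the count behaves as $q\to\infty$), so your step (4), which you identify as the main obstacle and which aims at bounding $q$, is directed at the wrong quantifier. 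What actually has to be ruled out is an infinite family of pairwise nonconjugate free $\frac 2 3$-transitive actions for a single $q$, with quotients of unbounded size.

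For comparison, the paper's proof is global rather than local: it invokes Theorem \ref{T - 23 transitive}, which says that if a finite type $A$ admits infinitely many pairwise nonconjugate free $\frac 2 3$-transitive cocompact actions, then a pigeonhole argument on the finitely many collar configurations $M_x \to M_y$ allows one to cut two isomorphic separating collars in one quotient and reglue infinitely many copies in a $\ZI$-periodic chain, producing a group of type $A$ that surjects onto $\ZI$. For $A=\tilde A_{2,q}$ this indicable group contradicts Garland's theorem (equivalently, property T via Cartwright--M{\l}otkowski--Steger), since $H^1(G,\CI)=0$ forces $\Hom(G,\ZI)$ to be torsion. Property T thus enters only to kill indicability of the surgered group, not through uniform Kazhdan constants bounding generators or $q$. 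If you want to salvage your approach you would need either a genuine bound on the number of vertex orbits (which $\frac 2 3$-transitivity does not give) or some substitute for the surgery step that converts ``infinitely many actions'' into a single object violating a known rigidity property; as written, the proposal does not prove the statement.
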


We prove this in \ts\ref{S - Global Indicability}.

\begin{remark}
A complete classification of the groups defined by Theorem \ref{T - buildings 23},  for every fixed prime power $q$, seems out of reach. It is also non trivial to find an asymptotic estimate of their number as $q\to \infty$.
\end{remark}

\begin{definition}
Two vertices $x$ and $y$ of $X$ are said to be \emph{separated} by a collar $C$ if there exists a neighbourhood $V$  containing $x,y$ and $C$ such that $V\setminus C$ has exactly two connected components, one containing $x$ and one containing $y$.
\end{definition}

For example, in a simplicialized 2-torus, the simplicial closure of an embedded circle not intersecting the vertex set defines a collar $C$ separating the vertices of $\del^- C$ from the vertices in $\del^+ C$.   

\begin{lemma}\label{L -separating full collars}
If $C$ is a full collar in $X$ with disjoint and connected boundaries, and $x,y$ are vertices in $\del^-C$ and $\del^+C$ respectively, then $C$ is separating $x$ and $y$. 
\end{lemma}

\begin{proof}
Since $C$ has compact closure and $\del^-C$ and $\del^+ C$ are disjoint, we may choose  disjoint neighbourhoods $V^-$ and $V^+$  of $\del^-C$ and $\del^+ C$, respectively. Furthermore, since $\del^-C$ and $\del^+ C$ are connected, we may assume that $V^-$ and $V^+$ are. Since $C$ is full, the set $V=V^-\cup C\cup V^+$ is a neighbourhood of $C$ containing $x$ and $y$. By construction, $V\setminus C$ has exactly two connected components containing $x$ and $y$ respectively.
\end{proof}

\begin{definition}We say that $C$ is a separating collar if it is full and if it has disjoint connected boundaries. 
\end{definition}

\begin{lemma}\label{L - separating collar}
Let $G\acts X$ be free and $\frac 2 3$-transitive. Any two distinct adjacent vertices $x,y$ of $X/G$ are separated by a collar, namely, the union of all open edges and open triangles whose vertex set closure coincides with $\{x, y\}$.
\end{lemma}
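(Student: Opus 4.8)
The plan is to exhibit the claimed set explicitly as a collar, verify it lies in $\Col(X/G)$ (or at least is open with disjoint connected boundaries), and then invoke Lemma \ref{L -separating open collars}. First I would work upstairs in $X$: fix lifts $\tilde x,\tilde y$ of $x,y$ that are adjacent (possible since $x,y$ are adjacent in $X/G$), and let $\tilde C$ be the union of all open edges and open triangles of $X$ whose vertex-set closure, after projecting to $X/G$, is exactly $\{x,y\}$. Because the action is $\frac 2 3$-transitive, every triangle of $X$ meets at most two vertex orbits, so a triangle either contributes to $\tilde C$ (if its three vertices project onto exactly $\{x,y\}$, i.e.\ it has vertices in the $x$-orbit and the $y$-orbit and no third orbit) or does not; there is no ambiguity. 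The key geometric point is that the edges whose endpoints project to $x$ and $y$ form a bipartite graph between the $x$-orbit and the $y$-orbit, and $\tilde C$ is a neighbourhood of this graph with the vertices themselves removed. Downstairs, $C := \tilde C/G$ is the set described in the statement.

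Next I would check the structural properties of $C$. \emph{Vertex-free and simplicially closed}: by construction $C$ contains no vertex, and it is a union of open cells together with all lower cells it abuts except the vertices, so it equals its simplicial closure minus $\{x,y\}$; this makes it a collar with nerve the graph obtained by cutting each such triangle transversally. \emph{Openness}: an open edge $e$ of $C$ has both endpoints in $\{x,y\}$, hence any triangle containing $e$ has its third vertex mapping to $x$ or $y$ as well (again by $\frac 2 3$-transitivity, since the triangle already meets the two orbits of $x$ and $y$, its third vertex must lie in one of these two orbits), so that triangle lies in $C$; thus $C$ is open in $X/G$. \emph{Boundaries}: $\del^- C$ consists of the portions of edges near the $x$-side and $\del^+ C$ of those near the $y$-side; since the action is free and $x\ne y$, no edge of $X/G$ runs from $x$ to $x$ inside $C$ nor from $y$ to $y$, so $\del^-C$ retracts onto a subgraph supported near $x$ and $\del^+C$ near $y$, and these are disjoint. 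Connectedness of the boundaries is the one point needing a small argument: the star of $x$ inside $C$ is connected because any two triangles of $C$ at $x$ are joined through a chain of triangles at $x$ (the link of $x$ in $C$ is a connected subgraph of $\Lk x$, as all these triangles share the common ``colour pattern'' $\{x,y\}$); here I would either add this as a hypothesis implicitly carried from the ambient setup or remark that one passes to the connected component, which is the collar actually separating $x$ from $y$.

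Finally, with $C$ open and (on each component) with disjoint connected boundaries, Lemma \ref{L -separating open collars} applies with the vertices $x\in\del^-C$ and $y\in\del^+C$ to conclude that $C$ separates $x$ and $y$: there is a neighbourhood $V = V^-\cup C\cup V^+$ of $C$ containing $x,y$ such that $V\setminus C$ has exactly two components, one containing $x$ and one containing $y$. I expect the main obstacle to be the connectedness of $\del^-C$ and $\del^+C$: unlike the disjointness and openness, which follow formally from $\frac 2 3$-transitivity and freeness, connectedness of the one-vertex ``slices'' of the collar is a genuine local condition on the link of $x$ (and of $y$) in $X/G$, and the honest fix is to replace $C$ by the component of it that actually witnesses the separation, which is harmless for the statement since we only need \emph{a} separating collar.
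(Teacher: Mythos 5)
Your overall route is the same as the paper's: take exactly the set named in the statement (the paper describes its nerve via the inscribed polygons of \S\ref{S - model geometry}, which are degenerate segments by $\frac 2 3$-transitivity), check that it is an open collar with disjoint connected boundaries, and conclude by Lemma \ref{L -separating open collars}. Your verification of openness (any triangle containing an $x$--$y$ edge already meets both orbits, so its third vertex is $x$ or $y$) is correct and is the right use of $\frac 2 3$-transitivity.

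The one genuine problem is your treatment of connectedness of $\del^-C$ and $\del^+C$. Your proposed justification (``the link of $x$ in $C$ is a connected subgraph of $\Lk x$'') is not true in general and is not needed, and your fallback of passing to a connected component of $C$ proves a weaker statement than the lemma, which asserts that the \emph{whole} union of such edges and triangles is the separating collar. In fact connectedness of the boundaries is automatic, and this is where you conflate two different objects: what may be disconnected is the \emph{nerve} $H$ (the paper says so explicitly), not the boundaries. Since the action is free and $\frac 2 3$-transitive, every triangle of $C$ has vertex set exactly $\{x,y\}$ in $X/G$; cutting each such triangle along its inscribed segment, the $x$-side of a triangle of type $(x,x,y)$ limits onto its closed $x$--$x$ edge, and the $x$-side of a triangle of type $(x,y,y)$ (or of an isolated $x$--$y$ edge) limits onto the point $x$. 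Hence $\del^-C$ is contained in $\{x\}$ together with edges both of whose endpoints are $x$, so it is a wedge of loops at $x$ and in particular connected, with $x\in\del^-C$; symmetrically $\del^+C$ is a connected wedge at $y$ containing $y$. Disjointness is then immediate, since the two boundary graphs have vertex sets $\{x\}$ and $\{y\}$ and distinct open edges of $X/G$ are disjoint. With these observations Lemma \ref{L -separating open collars} applies to the full collar, no passage to a component is required, and the lemma is proved as stated.
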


\begin{proof}
By definition, if  $G\acts X$ is $\frac 2 3$-transitive then every triangle in $X/G$ contains at most two vertices, and therefore the following property holds: every inscribed polygon (in the sense of \ts\ref{S - model geometry}) in a crossing face  of $X/G$ is degenerate and reduced to a segment.  The union of these segments over all the  triangles containing both $x$ and $y$ defines a graph $H$ in $X/G$ (which may not be connected). The resulting collar $C$ is the union of these triangles. Each triangle meets two points $x$ and $y$ of $X$, and the collar is separating $x$ from $y$ in the sense of the previous definition, by Lemma \ref{L -separating full collars}, since it is open, with disjoint connected boundaries.
\end{proof}

Let $G\acts X$ be a $\frac 2 3$-transitive action. We define a graph $Z$ as follows:
\begin{itemize}
\item Vertex set of $Z:=$ vertex set of $X/G$
\item Edge set of $Z:=$ pairs $(x,y)$ of adjacent vertices in $X/G$.
\end{itemize}
and consider the map
\[
\pi\colon X/G\to Z
\]
by sending every simplex of the model geometry over $x$ to $x$, and every simplex in the collar over $(x,y)$ to the open edge $(x,y)$.

\begin{definition}
The map $\pi\colon X/G\to Z$ (resp.\  the graph $Z$) is called the \emph{stack}  (resp.\ \emph{base}) of a $\frac 2 3$-transitive action $G\acts X$.  
\end{definition}

The map $\pi$ is a stack  
  in the sense of the Bass--Serre theory \cite{serre1977arbres}, as discussed in \cite[Chap.\ 6]{geoghegan2007topological}, where the nerves provide the edge fibers of the stack.

The stack $\pi\colon  X/G\to Z$ fails to provide a graph of groups decomposition for the group $G$ in general, and accordingly, the groups from   Bass--Serre theory bear little resemblance to groups of intermediate rank, in general, due to the missing $\pi_1$-injectivity assumption. 
Algebraically, the stack map $\pi$  provides little information beyond that which is already contained in the Seifert--van Kampen theorem. 

\begin{definition}
We say that a collar $C$ in $X$ separating two vertices $x$ and $y$ has \emph{$\pi_1$-injective boundary} if the maps $\del^-C\to M_x$ and $\del^+C\to M_y$ into the model geometry at $x$ and $y$ respectively are $\pi_1$-injective.  
\end{definition}

\section{Indicability criterion}\label{S - Global Indicability}

In this section we describe a ``global'' indicability criterion, which concerns the existence of an infinite abelian quotient  $G\surj \ZI$  that takes place in  $\Omega_A$, where the group $G$ may vary and is obtained from  surgery.

\begin{definition}\label{D _ G type A} We say that a group $G$ is \emph{of type $A$} if it admits a free action $G\acts X$ with compact quotient on  a complex $X$ of type $A$.
\end{definition}

(It would also be appropriate to include proper actions in this definition.)

\begin{theorem}\label{T - 23 transitive}
  Let $A$ be a finite combinatorial type. Assume that there exist infinitely many pairwise nonconjugate free $\frac 2 3$-transitive actions  with compact quotients on simplicial complexes of type $A$.  Then there exists an indicable group of type $A$.
\end{theorem}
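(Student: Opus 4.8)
The idea is to extract an indicable group from an infinite sequence of $\frac 2 3$-transitive actions by a compactness-and-pigeonhole argument in the space $\Omega_A$ (or its desingularization $\Lambda_A$), combined with the stack decomposition of $\frac 2 3$-transitive actions. Suppose we are given infinitely many pairwise nonconjugate free $\frac 2 3$-transitive actions $G_n \acts X_n$ with $X_n/G_n$ compact and of type $A$. For each $n$, Lemma \ref{L - separating collar} gives, for every pair of adjacent vertices in $X_n/G_n$, a separating collar built out of triangles, and hence a stack $\pi_n \colon X_n/G_n \to Z_n$ over a finite base graph $Z_n$. First I would bound the combinatorial complexity: because $A$ is a finite simplicial type, each model geometry $M_x$ and each collar nerve occurring in $X_n/G_n$ ranges over a finite list (by the Proposition on finiteness of model geometries of embedded type $A$, together with finiteness of $\Col_A$ up to isomorphism). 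So the only unbounded parameter is the size of the base graph $Z_n$; if the $Z_n$ had bounded size there would be only finitely many quotients $X_n/G_n$ up to isomorphism, and since the actions are free the group is recovered as $\pi_1$, contradicting pairwise nonconjugacy. Hence $|Z_n| \to \infty$ along a subsequence.

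\textbf{Passing to the limit.} Next I would use compactness of $\Lambda_A$ (the Proposition: $\Lambda_A$ is compact Hausdorff for finite $A$). Choosing base points, the marked universal covers $(X_n, *_n)$ — or rather the marked quotients, viewed with their deck group as a marking — subconverge in the pointed Gromov--Hausdorff / local-isomorphism topology to some complex $X_\infty$ of type $A$ without boundary, which carries a limiting free cocompact action of a group $G_\infty$ obtained as a limit of the $G_n$ (a standard ``germ of actions'' limit; the freeness and local finiteness pass to the limit because they are local conditions controlled by the finite type). The point of letting $|Z_n| \to \infty$ is that the limiting action $G_\infty \acts X_\infty$ acquires extra structure: the stacks $\pi_n$ converge to a stack $\pi_\infty \colon X_\infty/G_\infty \to Z_\infty$, and the growing base graphs force the limiting base to support a nontrivial map to $\ZI$. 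Concretely, I would argue that for $n$ large the graph $Z_n$ contains a long embedded path or cycle along which one can read off a ``height function'' — a simplicial map $Z_n \to \ZI/N_n\ZI$ with $N_n \to \infty$ — obtained from the stack structure (each edge of $Z_n$ carries a collar, hence a well-defined transition, and one orients consistently along the path). In the limit these assemble into a genuine surjection $Z_\infty \to \ZI$, which pulls back via $\pi_\infty$ and then via the quotient map to a surjection $G_\infty \to H_1(X_\infty/G_\infty) \to \ZI$, exhibiting $G_\infty$ as indicable and of type $A$.

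\textbf{The main obstacle.} The delicate step is the second one: producing the $\ZI$-valued height function in the limit and checking it descends to the group. The stack $\pi$ is \emph{not} a graph-of-groups decomposition (the paper stresses the failure of $\pi_1$-injectivity), so I cannot simply invoke Bass--Serre theory to get an action on a tree and hence a map to $\ZI$; instead the map to $\ZI$ has to be manufactured from the Seifert--van Kampen / push-out presentation that $\pi$ does provide, and one must verify that the relations coming from the model geometries $M_x$ — which are the vertex fibers — do not kill the putative homomorphism. This is exactly where finiteness of $A$ is used a second time and more essentially: there are only finitely many possible vertex-fiber relations, each of bounded length, so for $|Z_n|$ large enough one can choose the orientation of the height function to be ``transverse'' to all of them, i.e. so that every defining relator of every $M_x$ maps to $0 \in \ZI$. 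Making this transversality argument precise — essentially a Hall-type / expander-avoidance combinatorial lemma on the base graphs $Z_n$, ensuring a consistent orientation exists once the base is large relative to the (bounded) local data — is the technical heart of the proof; everything else is compactness bookkeeping. I would also need to confirm that the limit group $G_\infty$ is genuinely new (or at least that indicability is not vacuously obtained from a finite group), which again follows from $|Z_n| \to \infty$ forcing $X_\infty/G_\infty$ to be infinite, hence $G_\infty$ infinite.
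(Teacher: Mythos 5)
There is a genuine gap, and it sits at the centre of your scheme rather than in the bookkeeping. First, your limiting step is internally inconsistent: you argue that $|Z_n|\to\infty$ along a subsequence, and then claim the pointed limits carry a ``free cocompact action of $G_\infty$'' while also saying at the end that $X_\infty/G_\infty$ is infinite. Since the complexes are locally finite (finite type), an infinite quotient is non-compact, so the limit action cannot be cocompact; compactness of $\Lambda_A$ only gives convergence of pointed complexes, not of cocompact actions. But the theorem asks for an indicable group \emph{of type $A$}, which by Definition \ref{D _ G type A} means a free action with \emph{compact} quotient, so the limit group you produce does not satisfy the conclusion even if it were indicable. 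Second, the indicability itself is not established: a surjection of graphs $Z_\infty\to\ZI$ does not induce any map on $\pi_1$ or $H_1$ (you would need a map to a circle, equivalently an infinite cyclic cover, and the stack is explicitly not $\pi_1$-injective, as you note); the ``transversality/height function'' step that would repair this is exactly the part you leave unproved, and it is not a routine lemma.

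The paper's proof avoids both problems by never taking a limit. It uses the finiteness of collar configurations $M_x\stackrel{C_{x,y}}\lra M_y$ plus the pigeonhole principle on a long non-backtracking segment of the base $Z_0$ of a single action $G_0\acts X_0$ to find two \emph{isomorphic, coherently oriented} configurations inside one compact quotient. It then performs surgery: in $\ZI$ many copies of $X_0/G_0$ it duplicates these two collars (a local two-sheeted cover over each collar) and re-glues copy $p$ to copy $p+1$ along the duplicated collars, producing an infinite chain $Y$ of type $A$ with a free translation action of $\ZI$. The compact quotient $Y/\ZI$ is a new complex of type $A$, and $G:=\pi_1(Y/\ZI)$ acts freely, $\frac23$-transitively and cocompactly on its universal cover while admitting the infinite cyclic cover $Y\to Y/\ZI$, hence $G\surj\ZI$. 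This cut-and-reglue construction delivers simultaneously the compact quotient and the surjection to $\ZI$, which is precisely what your limit-plus-height-function outline cannot supply as written.
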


\begin{proof}
Let $G\acts X$ be free and $\frac 2 3$-transitive on a simplicial complex of type $A$ with stack
\[
\pi\colon X/G\to Z.
\]
For every pair $(x,y)$ of distinct adjacent vertices in $X/G$, let $C_{x,y}$ be the separating collar between $x$ and $y$ (Lemma \ref{L - separating collar}), and, for every vertex $x$ in $X/G$, let $M_x$ be the model geometry at $x$ in $X/G$. (The collars $C_{x,y}$ are not assumed to have $\pi_1$-injective boundaries.) 

Let us write 
\[
M_x\stackrel{C_{x,y}}\lra M_y
\]
for the corresponding configuration in quotient space $X/G$ (which corresponds to a closed edge in $Z$ under $\pi$).
 By the finiteness of  $A$,  the total number of all such collar configurations  is finite up to isomorphism.

Assume  that the set of conjugacy classes of free $\frac 2 3$-transitive actions $G\acts X$, where $X$ is a simplicial complex of type $A$ with $X/G$ compact, is infinite. Notice that the stacks of conjugate actions are equivariant, and the bases are isomorphic. Furthermore, since $A$ is finite, the bases $Z$ are uniformly locally bounded, and therefore we can find arbitrary long non-backtracking segments in the bases $Z$. 

By the box principle, for one of these actions, say $G_0\acts X_0$,  one can find three disjoint consecutive edges $e, f, g$ in a segment included in $Z_0$, whose collar configurations in $X_0/G_0$ are isomorphic to a given configuration. Two of the three edges, say $e$ and $f$, are pointing in the same direction, and we can find an isomorphism between the configuration  
\[
M_x\stackrel{C_{x,y}}\lra M_y.
\ \ \ \text{and} \ \ \ M_{x'}\stackrel{C_{x',y'}}\lra M_{y'},
\]
corresponding to $e$ and $f$ respectively,
 that takes $x$ to $x'$ and $y$ to $y'$.

Consider, then,  infinitely many copies $G_p\acts X_p$, indexed by $p\in \ZI$, of this action $G_0\acts X_0$. The complex $G\acts X$ with $G\surj \ZI$ will be obtained by doing a simple surgery on the quotient spaces $X_p/G_p$ which respects the type $A$.

The surgery starts by ``duplicating the collars'' in order to be able to glue them together.
Write $X_p/G_p$ (for every $p\in \ZI$) as a quotient of a space $\hat X_p$
\[
\pi_p\colon \hat X_p\surj X_p/G_p
\]
where $\pi_p$ is the identity map outside $C_{x_p,y_p}$ and $C_{x_p',y_p'}$, and a two sheeted covered over $C_{x_p,y_p}$ and $C_{x_p',y_p'}$, resulting in a space with four copies the same collar (up to isomorphism) in the neighbourhood of its boundary, say
\[
C_{x_p,y_p}^-, C_{x_p,y_p}^+, C_{x_p',y_p'}^-, C_{x_p',y_p'}^+.
\]

We distinguish three cases, according to the number of connected components of $Z_0\setminus \{e,f\}$.  

Note that $C_{x_p,y_p}^+$ and $C_{x_p',y_p'}^-$ are in the same component.

\begin{enumerate}
\item If $Z_0\setminus \{e,f\}$ is connected, then we identify (for every $p\in \ZI$) 
\begin{enumerate}
\item[]   $C_{x_p,y_p}^+$  with $C_{x_{p+1}',y_{p+1}'}^-$, and

\item[] $C_{x_p',y_p'}^+$  with  $C_{x_{p},y_{p}}^-$.
\end{enumerate}

\item If $Z_0\setminus \{e,f\}$ has two connected components, then $\hat X_p$ has now two connected components. If the component containing  $C_{x_p,y_p}^+$ and $C_{x_p',y_p'}^-$ contains only these two collars, then we identify (for every $p\in \ZI$) 
\begin{enumerate}
\item []  $C_{x_p,y_p}^+$  with $C_{x_{p+1}',y_{p+1}'}^-$
\end{enumerate}
and discard the other component.  Otherwise, one of the two components contains 3 collars, and then we repeat the steps described in the case where $Z_0\setminus \{e,f\}$ is connected.

\item Finally, if $Z_0\setminus \{e,f\}$ has three connected components, then  the component containing  $C_{x_p,y_p}^+$ and $C_{x_p',y_p'}^-$ contains only these two collars, and we discard two connected components and repeat the steps in the case where $Z_0\setminus \{e,f\}$ has 2 connected components.  
\end{enumerate}

In all three cases, the resulting space $Y$ can be represented symbolically as an infinite chain:
\[
\ldots \lra M_{x_p} - M_{y_p} \lra M_{x'_p} - M_{y'_p} \lra M_{x_{p+1}} - M_{y_{p+1}}\lra \cdots  
\]
$\lra$ indicates the surgery operation.

Note that the only new geometric configurations in $Y$ are of the form
\[
M_x\lra M_{y'} \ \ \ \text{and} \ \ \ M_{x'}\lra M_{y}
\] 
which fit into chains of the form 
\[
M_x\stackrel{C_{x,y}}\lra M_y\simeq M_{x'}\stackrel{C_{x',y'}}\lra M_{y'}
\] 
 where the type is preserved by definition of the identification. 
Therefore, $Y$ is of type $A$.

Furthermore, the space $Y$ admits a free action of $\ZI$ by translations, whose quotient $Y/\ZI$ is compact and isomorphic to a ``double'' of $X_0/G_0$ (which needs not be a double cover). 
Namely, the quotient space $Y/\ZI$  can be represented symbolically as follows:
\begin{center}
\begin{tikzpicture}
\matrix (m) [
matrix of math nodes,
row sep=.3cm,
column sep=.7cm,
% text height=1.5ex,
% text depth=0.25ex
] {
 & M_{x'} & \\
Y/\ZI\ \  =\ \ \  M_y&&M_{y'}\\
&M_x&\\
};
\path[-] (m-1-2) edge node[above]{} (m-2-3);
\path[-] (m-2-1) edge node[above]{} (m-3-2);
\path[<->] (m-2-1) edge node[above]{} (m-1-2);
\path[<->] (m-2-3) edge node[above]{} (m-3-2);
\end{tikzpicture}
\end{center}

Let 
\[
G:=\pi_1(Y/\ZI)\text{ and } X:= \widetilde{Y/\ZI}.
\]
Since the type is clearly preserved by taking universal covers, the complex $X$ is of type $A$.
Furthermore, by construction, $G\acts X$ is a free $\frac 2 3$-transitive action with compact quotient $X/G=Y/\ZI$ admits an infinite abelian cover. In particular $G\surj \ZI$. 
(Note that the action of the group of Galois transformations of $X\surj Y$ 
 itself is   free and $\frac 2 3$-transitive.)
 \end{proof}

\begin{proof}[Proof of Theorem \ref{T - buildings 23}]
Since the type ``$\tilde A_{2,q}:=\tilde A_2$ of order $q$'' is finite, if there are infinitely many $\frac 2 3$-transitive free actions, then we can find an indicable group $G$ of type $\tilde A_{2}$. This contradicts property T---In fact, this contradicts either  Garland's theorem directly, or indeed the Cartwright--Mlotkowski--Steger theorem that such a group has Kazhdan's property T. We recall that Garland's theorem is the statement that the first cohomology group $H^1(G,\pi)$ with coefficient in a finite dimensional unitary representation $\pi$ vanishes. In particular, $H^1(G,\CI)=0$ (trivial coefficients) so that $H^1(G,\ZI)=\Hom(G,\ZI)$ is torsion. Therefore, the number of groups, and for every such group, the number of its $\frac 2 3$-transitive free action, is finite.
\end{proof}

We note that these constructions  extend beyond simplicial complexes, using the following version of $\frac 2 3$-transitivity:  

\begin{definition}
A free action $G\acts X$ on a 2-complex is \emph{mildly  transitive} if every inscribed polygon in a crossing face  of $G/X$ is reduced to a segment.
\end{definition}

 A simplicial separating collar between any two distinct adjacent vertices can defined in the same way for mildly transitive actions, and for simplicial complexes, mild  transitivity is equivalent to $\frac 2 3$-transitivity.

  The following straightforward generalization of Theorem \ref{T - 23 transitive} holds.

\begin{theorem} If  $A$ is finite and there are infinitely many free mildly transitive actions $G\acts X$, with $X$ of type $A$ and $X/G$ compact, then there exists an indicable group of type $A$.
\end{theorem}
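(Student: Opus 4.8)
The plan is to mimic the proof of Theorem \ref{T - 23 transitive} essentially verbatim, checking only that each step survives the passage from $\frac 2 3$-transitivity to mild transitivity. The overarching strategy is unchanged: start from infinitely many pairwise nonconjugate free mildly transitive actions $G\acts X$ with $X$ of type $A$ and $X/G$ compact, extract (by finiteness of $A$ and the box principle) a single action $G_0\acts X_0$ containing three consecutive edges $e,f,g$ in a non-backtracking segment of the base $Z_0$ whose collar configurations $M_x\stackrel{C_{x,y}}\lra M_y$ are all isomorphic to one fixed configuration, with two of them, say $e$ and $f$, oriented the same way; then take $\ZI$-many copies $G_p\acts X_p$, duplicate the relevant collars by passing to a two-sheeted cover $\hat X_p\surj X_p/G_p$ over $C_{x_p,y_p}$ and $C_{x_p',y_p'}$, and glue the copies cyclically according to the three cases for the number of components of $Z_0\setminus\{e,f\}$. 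The resulting space $Y$ carries a free $\ZI$-action by translations with compact quotient $Y/\ZI$ (a ``double'' of $X_0/G_0$), and setting $G:=\pi_1(Y/\ZI)$, $X:=\widetilde{Y/\ZI}$ produces a free action $G\acts X$ with $X$ of type $A$, $X/G$ compact, and $G\surj\ZI$, so $G$ is the desired indicable group of type $A$.

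The points that need re-examination are exactly those where the original proof invoked $\frac 2 3$-transitivity. First, the construction of the separating collar $C_{x,y}$ between adjacent vertices: in the simplicial case this was Lemma \ref{L - separating collar}, whose proof only used that every inscribed polygon in a crossing face is degenerate and reduced to a segment. As remarked just before the statement, this degeneracy property is precisely mild transitivity, so the union of these segments over all crossing faces containing both $x$ and $y$ still defines a graph $H$ in $X/G$, the union of the corresponding open faces (and open edges) is still an open collar with disjoint connected boundaries, hence separating by Lemma \ref{L -separating open collars}. Second, the notions of stack $\pi\colon X/G\to Z$, base $Z$, and model geometry $M_x$ at a vertex: all of these were defined in \ts\ref{S - model geometry} and \ts\ref{S - 2/3 transitive} purely in terms of the decomposition $X\simeq\bigl(\bigsqcup_x M_x\sqcup\bigsqcup_i s_i\bigr)/\sim$, and for a mildly transitive action each inscribed shape $s_i$ is a segment, so the stack still has edge fibers given by the nerves and the symbolic chain picture $\ldots\lra M_{x_p}-M_{y_p}\lra\cdots$ is literally the same. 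Third, the finiteness of the set of collar configurations up to isomorphism, which follows from finiteness of $A$ exactly as before.

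With those checks in place, the surgery itself, the three-case gluing analysis, the verification that $Y$ is of type $A$ (the only new local configurations are the $M_x\lra M_{y'}$ and $M_{x'}\lra M_y$ that sit inside chains $M_x\stackrel{C_{x,y}}\lra M_y\simeq M_{x'}\stackrel{C_{x',y'}}\lra M_{y'}$, where the type is preserved by the very definition of the identification), the freeness and compactness of the $\ZI$-action on $Y$, and the conclusion $G\surj\ZI$ all go through word for word. I would also note, as in the original, that the Galois group of $X\surj Y$ acts freely and mildly transitively, which keeps the whole output inside the mildly transitive world.

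The main obstacle, and really the only one, is making sure that mild transitivity is genuinely the exact hypothesis used everywhere $\frac 2 3$-transitivity appeared in the proof of Theorem \ref{T - 23 transitive}, rather than some strictly stronger simplicial consequence of it; concretely this means verifying that Lemmas \ref{L -separating open collars} and \ref{L - separating collar}, the definition of the stack, and the isomorphism-finiteness of collar configurations each only require that inscribed polygons in crossing faces degenerate to segments. Given the remark preceding the theorem statement (which asserts the equivalence ``two classes per crossing face $\ssi$ segment'' and that the separating collar is defined ``in the same way''), this is a matter of bookkeeping rather than a new idea, so the generalization is indeed ``straightforward'' in the sense claimed.
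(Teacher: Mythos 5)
Your proposal is correct and matches the paper's intent exactly: the paper offers no separate argument for this theorem, asserting only that it is a straightforward generalization of Theorem \ref{T - 23 transitive}, precisely because (as noted in the remark preceding the statement) mild transitivity is the degeneracy-of-inscribed-polygons property that the separating-collar construction and the rest of that proof actually use. Your step-by-step verification that Lemmas \ref{L -separating open collars} and \ref{L - separating collar}, the stack, the finiteness of collar configurations, and the surgery all rely only on that property is exactly the bookkeeping the paper leaves to the reader.
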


\section{The ST lemma}\label{S - Minimal linking graph}\label{S - ST Lemma}

\begin{definition}
  A collar is \emph{thick} if every vertex of the nerve is adjacent to at least three edges. 
\end{definition}

\begin{definition} A metric complex $X$ is $\theta$-convex if the angle of every face is $<\theta$. 
 We let $A(2\pi,\theta)$ denote the metric type which describes the simplicial metric $\theta$-convex 2-complexes of  nonpositive curvature. 
\end{definition}
  
  (The type $A(2\pi,\theta)$ is uncountable.)

\begin{lemma}[ST lemma]\label{L - minimal linking graph} 
 The nerve of a minimal thick collar of $\Col_{A(2\pi,2\pi/3)}$ is isomorphic to one of the following two graphs: 
 \begin{enumerate}[a)]
 \item  the cylinder --- or ``thickened square''
 \[
 S= \begin{tikzpicture}[baseline=2.7ex]
 \coordinate (A) at (0,0);
 \coordinate (B) at (1,0);
 \coordinate (C) at (1,1);
 \coordinate (D) at (0,1);
    \draw (A) -- (B);
    \draw (C) -- (D);
    \draw (A) to [bend left] (D);
    \draw (B) to [bend left] (C);
    \draw (A) to [bend right] (D);
    \draw (B) to [bend right] (C);
  \end{tikzpicture}
 \]
or,
 \item the tetrahedron: 
 \[
 T= \begin{tikzpicture}[baseline=2ex]
 \coordinate (A) at (0,0);
 \coordinate (B) at (1,0);
 \coordinate (C) at (.5,.866);
 \coordinate (D) at (.5,.2886);
    \draw (A) -- (B) -- (C) -- cycle;
  \draw (A) -- (D); \draw (B) -- (D); \draw (C) -- (D);
  \end{tikzpicture}
 \]
 \end{enumerate} 
(We view $S$ as a square with two opposite double edges.) 
\end{lemma}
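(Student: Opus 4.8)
The plan is to analyze a minimal thick collar $C \in \Col_{A(2\pi,2\pi/3)}$ through the local geometry its nerve $H$ imposes on the ambient complex $X$, and to run a counting argument on links. Recall that $A(2\pi, 2\pi/3)$ describes simplicial metric $2\pi/3$-convex nonpositively curved $2$-complexes, so every face is a triangle with all angles $< 2\pi/3$, and every link has girth $\geq 2\pi$ in the angular metric; since faces are triangles with angles $< 2\pi/3$, each edge of a link has length $< 2\pi/3$, so every link cycle has length $\geq 4$ (i.e. passes through at least $4$ edges). First I would fix $t_0 \in (0,1)$ and look at the graph $C(H, t_0)$ together with the faces of $X$ it meets: since $C$ is simplicially closed, vertex-free, open, acylindrical and nonconical, every edge of $H$ corresponds to a strip of faces crossing it, and every vertex $v$ of $H$ of valence $k$ (here $k \geq 3$ by thickness) sits at the center of a ``wheel'' — a cone neighborhood in $X$ whose link is a cycle. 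The key local constraint is that this cycle must have length $\geq 4$ by the NPC condition, while its edges come in groups determined by the $k$ edges of $H$ at $v$.

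The core of the argument is then a discharging/minimality analysis. Thickness gives $k \geq 3$ at every nerve vertex; I would argue that minimality (the collar cannot be shrunk to a smaller thick collar in $\Col$, and it is nonconical) forces $k = 3$ everywhere — if some vertex had valence $\geq 4$, or if the wheel at a valence-$3$ vertex had ``too many'' faces, one could excise part of the collar closure and still retain a thick acylindrical open nonconical collar, contradicting minimality. With all nerve vertices of valence exactly $3$, $H$ is a cubic graph; counting edges versus vertices and using that the strips glue up consistently around each wheel (each wheel contributing a link cycle of length exactly $4$, since length $3$ is forbidden by convexity and length $> 4$ is ruled out by minimality of the face count) pins down the combinatorics. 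A connected cubic graph arising this way, with the gluing data of a collar (two boundary components $\del^-C$, $\del^+C$, acylindrical, nonconical), and with exactly the right number of faces in each wheel, must be either the tetrahedron $T$ (four vertices, valence $3$, girth $3$ — realized because its faces can still be $2\pi/3$-convex while the collar is thick) or the ``thickened square'' $S$, the unique other cubic graph with four vertices, namely the square with two opposite doubled edges. I would verify that both $S$ and $T$ are genuinely realizable as nerves of thick collars in some complex of type $A(2\pi, 2\pi/3)$, so the list is exhaustive and non-redundant.

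The main obstacle I anticipate is making the minimality argument fully rigorous: one needs a precise notion of "minimal thick collar" (presumably minimal with respect to inclusion of collar closures, or minimal span) and a clean excision lemma showing that any thick collar strictly containing combinatorial data beyond that of $S$ or $T$ admits a proper thick acylindrical open nonconical subcollar. This is where the interplay between the global collar axioms (acylindrical, nonconical, open) and the local NPC wheel condition has to be handled carefully — in particular ruling out the degenerate ways a cubic nerve could fail to be one of the two listed graphs (e.g. multigraphs on two vertices, or the $K_{3,3}$-type configurations), each of which I would eliminate either by the girth-$\geq 4$ link condition or by exhibiting an excision. A secondary technical point is checking that the $2\pi/3$ convexity bound (as opposed to a general $\theta$) is exactly what makes length-$3$ link cycles compatible with thickness in the tetrahedral case but forces the cylinder as the only alternative; I would isolate this as the quantitative heart of the lemma.
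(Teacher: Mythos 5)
There is a genuine gap: your reading of ``minimal'' and the excision mechanism built on it do not work, and they bypass what is actually the heart of the lemma. In the paper, minimality is minimality of size among all thick collars of the given type, and the real content is a \emph{lower bound}: no thick, open, vertex-free, nonconical collar in a nonpositively curved $\frac{2\pi}{3}$-convex simplicial complex can have a nerve with $3$ or fewer vertices (equivalently, fewer than $6$ edges). This is proved via the \emph{span decomposition}: since the collar is thick and vertex-free, the line over each nerve vertex lies in an open edge of $X$, so each nerve edge corresponds to a triangle attached to a well-defined vertex of $X$; this partitions the edge set of $H$ into subgraphs $H_x$, $x\in\spn(C)$, each embedding in the link $\Lk(x)$. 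Openness plus nonconicality force $|\spn(C)|\geq 2$, and then the girth condition kills the small cases: a loop or a double edge in some $H_x$ would give a link cycle of length $<2\pi$ (edges have length $<\pi$ by strict convexity), and in the $v=3$ case one of the $H_x$ would be a link triangle of length $<3\cdot\frac{2\pi}{3}=2\pi$ --- this is precisely where the $\frac{2\pi}{3}$ bound enters. Hence $v\geq 4$, $e\geq 6$, a minimal nerve is a connected cubic multigraph on $4$ vertices, and $S$ and $T$ are the only two such graphs. Your proposal never establishes this lower bound, never uses nonconicality (beyond listing it), and does not introduce the span decomposition, which is the device that transports the link-girth condition to the nerve.

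Concretely, two of your steps would fail. First, the proposed excision lemma (``any thick collar strictly containing data beyond $S$ or $T$ admits a proper thick subcollar'') is not available and is contradicted in spirit by the paper itself: later sections exhibit thick collars with cubic nerves on $8$ vertices (the cube, the graph $\Theta$, the octagonal nerve) inside complexes of type $\frac 7 4$ and $\tilde A_{2,2}$, which satisfy nonpositive curvature and $\frac{2\pi}{3}$-convexity; these are not excisable down to $S$ or $T$, they are simply larger, so ruling out ``$K_{3,3}$-type configurations \ldots by the girth-$\geq 4$ link condition or by exhibiting an excision'' cannot succeed. Second, your local model is off: a nerve vertex corresponds to a segment inside an open \emph{edge} of $X$, whose neighborhood is a cone on the (discrete) link of that edge, not a wheel with a link cycle; the cycles constrained by the girth condition live in the links of the span vertices of $X$, which is exactly what the span decomposition records, and your claim that minimality forces each such cycle to have length exactly $4$ plays no role in (and does not yield) the classification. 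What is salvageable from your write-up is the correct observation that girth $\geq 2\pi$ together with angles $<\frac{2\pi}{3}$ forces every link cycle to have at least $4$ edges, and the correct count $e\geq\frac 3 2 v$ from thickness; but without the span decomposition, the use of nonconicality, and the case analysis for $v\leq 3$, the argument does not close.
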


The proof is given below.
Let $X$ be a nonpositively curved $\frac {2\pi}3$-convex metric simplicial 2-complex.

Let $C$ be a thick collar in $\Col(X)$.  Since $C$ is thick, the path 
\[
\gamma_v\colon t\mapsto C(v,t)
\]
in $X$ is contained in the 1-skeleton of $X$ for every vertex $v$ of $H$.
Since $C$ is vertex-free, $\gamma_v$ does not intersect the vertex set of $X$. 

It follows that every edge $e$ in $H$ between two vertices $u,v$ is attached to a unique vertex in $\spn(C)$, defined as the intersection of the two edges $e_u$ and $e_v$ containing $\gamma_u$ and $\gamma_v$. 

This gives a surjective map
\[
H^1\surj \spn(C).
\] 
\begin{definition}
This map is called the \emph{span decomposition} of the nerve $H$. 
\end{definition}

The span decomposition consists of $|\spn(C)|$ subgraphs of $H$, denoted $H_x$ for $x\in \spn(C)$, partitioning the edge set. Since every edge in $H_x$ corresponds to a triangle attached to $x$ we have 

\begin{lemma}
For every $x\in \spn(C)$, the graph $H_x$ is isomorphic to subgraph of the link of $x$.
\end{lemma}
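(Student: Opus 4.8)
The plan is to exhibit an explicit injective graph morphism $\phi\colon H_x\hookrightarrow\Lk(x)$ and observe that it is an isomorphism onto its image. Recall first the combinatorial description of the link: since $X$ is simplicial, $\Lk(x)$ is the graph whose vertices are the edges of $X$ containing $x$, whose edges are the (triangular) faces of $X$ containing $x$, and in which a face $f$ joins the two edges of $f$ incident to $x$. So I need to attach to each vertex of $H_x$ an edge of $X$ at $x$, to each edge of $H_x$ a face of $X$ at $x$, compatibly, and then prove injectivity.

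The vertex assignment is already on hand. For a vertex $u$ of $H_x$, let $e_u$ be the open edge of $X$ containing the path $\gamma_u$; this is legitimate because $C$ is thick, so $\gamma_u$ lies in the $1$-skeleton, and vertex-free, so $\gamma_u$ stays inside a single open edge. As $u$ is an endpoint of at least one edge $\epsilon=[u,v]$ of $H_x$, the span decomposition gives $e_u\cap e_v=\{x\}$, so $e_u$ is an edge of $X$ through $x$, i.e.\ a vertex of $\Lk(x)$; set $\phi(u):=e_u$. For the edge assignment, given $\epsilon=[u,v]$ in $H_x$, I would argue that the strip $C(\epsilon\times(0,1))$ lies in the closed star of $x$ and, because $X$ is simplicial, $C$ is vertex-free and simplicially closed, that it occupies exactly one open face $f_\epsilon$ of $X$ — necessarily a triangle with $x$ as a vertex whose two sides at $x$ are precisely $e_u$ and $e_v$. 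This is the assertion "every edge in $H_x$ corresponds to a triangle attached to $x$". Put $\phi(\epsilon):=f_\epsilon$. Then $\phi$ is a graph morphism $H_x\to\Lk(x)$: the edge $f_\epsilon$ of $\Lk(x)$ joins its two sides at $x$, namely $e_u=\phi(u)$ and $e_v=\phi(v)$.

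It remains to verify injectivity, which is the heart of the matter. On edges: if $\epsilon\neq\epsilon'$ are edges of $H_x$, the strips $\epsilon\times(0,1)$ and $\epsilon'\times(0,1)$ are disjoint in $H\times(0,1)$, hence have disjoint images under the embedding $C$; since, by simplicial closure, every open face met by $C$ is contained in $\operatorname{im}C$ and is an open disk, it cannot be split between two disjoint strips, so $f_\epsilon\neq f_{\epsilon'}$. On vertices: suppose $u\neq u'$ are vertices of $H_x$ with $e_u=e_{u'}=e$. Then $\gamma_u$ and $\gamma_{u'}$ are disjoint subarcs of the open edge $e$, while each of them accumulates at the vertex $x$ (a common endpoint of $e$), because the span decomposition places $x=e_u\cap e_v$ in the closure of the collar over the edge of $H_x$ through $u$, and likewise for $u'$. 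Two disjoint open subintervals of an open edge cannot both accumulate at the same endpoint, a contradiction; hence $u=u'$. Therefore $\phi$ is injective, and $H_x\cong\phi(H_x)\subset\Lk(x)$.

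The main obstacle is the geometric identification used in the second paragraph: establishing that the image of a single nerve-edge strip is exactly one open triangle of $X$ having $x$ as a vertex — that it neither wraps around several faces nor escapes into the rest of $X$. This is precisely the point at which the hypotheses that $X$ is simplicial, that $C$ is thick (so the $\gamma_v$ run in the $1$-skeleton), vertex-free, and simplicially closed must all be combined; once it is in place, the graph-morphism property and injectivity follow formally from $C$ being an embedding together with simplicial closure.
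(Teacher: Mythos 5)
Your construction is essentially the paper's own argument: the paper deduces the lemma in one line from the correspondence sending each vertex $u$ of $H_x$ to the edge $e_u\supset\gamma_u$ at $x$ and each edge of $H_x$ to a triangle attached to $x$, which is exactly your map $\phi$, and your injectivity verifications only make explicit what the paper leaves implicit. The step you flag as the main obstacle---that the strip over a nerve edge occupies a single open triangle having $x$ as a vertex with sides $e_u,e_v$---is precisely the premise the paper itself builds into the span decomposition just before the lemma (``every edge in $H_x$ corresponds to a triangle attached to $x$''), so using it is faithful to the paper, and your write-up is if anything more detailed than the original.
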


Pictorially, we have $|\spn(C)|$ subgraphs in the links of $x\in \spn(C)$ that ``move towards the centerpiece'' of the collar to recombine into the nerve $H$ of $C$.  

We note that:

\begin{lemma}
If $C$ is a collar and $\spn(C)=\{x\}$, then $H$ is isomorphic to a union of connected components of the link of $x$.
\end{lemma}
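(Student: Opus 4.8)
The statement to prove is the last lemma: if $C$ is a collar in $X$ with $\spn(C)=\{x\}$, then the nerve $H$ is isomorphic to a union of connected components of $\Lk x$. The plan is to combine the span decomposition with the hypothesis that there is only one vertex in the span. When $\spn(C)=\{x\}$, the span decomposition $H^1\surj\spn(C)$ is the constant map, so the single subgraph $H_x$ of the decomposition is all of $H$; by the preceding lemma $H=H_x$ embeds as a subgraph of $\Lk x$. It remains to upgrade ``subgraph'' to ``union of connected components'', i.e.\ to show the embedded copy of $H$ is closed under the adjacency of $\Lk x$: no edge of $\Lk x$ has exactly one endpoint on the image of $H$, and no edge of $\Lk x$ with both endpoints on $H$ is missing from $H$.

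First I would set up notation: fix $t_0\in(0,1)$, let $\gamma_v(t)=C(v,t)$ for each vertex $v$ of $H$, and recall from the discussion preceding the lemma that (since $C$ is thick, hence fiberwise in the $1$-skeleton, and vertex-free) each $\gamma_v$ runs in the interior of a unique open edge $e_v$ of $X$, and each edge $e=\{u,v\}$ of $H$ determines a unique triangle of $X$ attached to $x$ whose two sides through $x$ are $e_u$ and $e_v$; this triangle contributes the edge of $\Lk x$ joining the point $e_u\cap\Lk x$ to $e_v\cap\Lk x$. This is exactly the identification of $H_x$ with a subgraph of $\Lk x$. The key point is that because $C$ is \emph{open} in $X$ (Definition of open collar), every face of $X$ adjacent to one of the open edges $e_v$ is contained in $\overline C$; in particular every triangle of $X$ incident to the edge $e_v$ and to the vertex $x$ lies in the collar closure, hence corresponds to an edge of $H$ at $v$. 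Translating to links: every edge of $\Lk x$ incident to a vertex of (the image of) $H$ is itself an edge of $H$. That is precisely the statement that the image is a union of connected components of $\Lk x$.

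So the steps, in order, are: (1) observe $\spn(C)=\{x\}$ forces $H=H_x$, so $H\inj\Lk x$ as a subgraph via the previous lemma; (2) recall, from the setup before the ST lemma, the explicit description of this embedding in terms of the edges $e_v\subset X^1$ and the triangles of $X$ at $x$; (3) use the hypothesis that $C$ is open (collars in $\Col(X)$ are open by definition) to conclude that every triangle of $X$ containing $x$ and meeting one of the edges $e_v$ lies in $\overline C$, hence is recorded as an edge of $H$; (4) conclude that the subgraph $H\subset\Lk x$ is full on its vertex set and closed under incidence, i.e.\ a union of connected components. One should also note the edge case where $H$ has an isolated vertex $v$ (no edges of $H$ at $v$): then $e_v$ bounds no triangle lying in $\overline C$ at $x$, and openness forces $e_v$ to bound no triangle at $x$ in $X$ at all, so the corresponding vertex of $\Lk x$ is isolated there too — consistent with the conclusion.

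The main obstacle I anticipate is step (3): making precise that ``openness of $C$ in $X$'' genuinely propagates from the open edges $e_v$ to \emph{all} triangles of $X$ at $x$ incident to them, as opposed to only those triangles that happen to lie in the simplicial closure a priori. One must be a little careful that the vertex $x$ itself is not in $C$ (the collar is vertex-free), so ``$\overline C$ contains the triangle'' must be argued using the closure $\overline C$ and the extension map $H\times[0,1]\to\overline C$ rather than $C$ itself; concretely, a triangle $\tau\ni x$ sharing the edge $e_v$ with $\overline C$ has its interior adjacent to $e_v$, and since $C$ is open its interior and its third side lie in $C$, so $\tau$ contributes an edge at $v$ to the nerve. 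Once this local openness argument is pinned down, the rest is bookkeeping with the span decomposition.
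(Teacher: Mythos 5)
Your proposal is correct and follows essentially the same route as the paper: identify $H$ with the single span graph $H_x\subset \Lk x$, then use the openness of $C$ (every face adjacent to an open edge of $C$ lies in $C$) to see that $H_x$ contains every edge of $\Lk x$ incident to any of its vertices, hence is a union of connected components. Your step (3) is just a more detailed unpacking of the paper's one-line openness argument, so there is nothing substantively different to flag.
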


\begin{proof}
The nerve $H$ is isomorphic to the subgraph $H_x$ of the link $L_x$. The collar $C$ being full, if $H_x$ contains a vertex of $L_x$, it contains all the edges attached to $x$. This implies that $H_x$ is a union of connected components of $L_x$. 
\end{proof}

Since we have assume that collars in $\Col(X)$  be non conical, this shows that $|\spn(C)|\neq 1$ for every $C\in \Col(X)$.

\begin{lemma} Under the assumptions of the ST lemma, the nerve $H$ contains at least 4 vertices and 6 edges.
\end{lemma}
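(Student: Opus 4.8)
The plan is to show first that the nerve $H$ is a \emph{simple} graph, and then to read off both bounds from thickness.

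To see that $H$ has no loops and no multiple edges, I would combine the span decomposition $H^1\surj\spn(C)$ with the fact, established just above, that each piece $H_x$ is isomorphic to a subgraph of the link $L_x$ of $x$. The key point is that, for a vertex $u$ of $H$, the path $\gamma_u\colon t\mapsto C(u,t)$, and hence the edge $e_u$ of $X$ in whose interior it lies, depend only on $u$ and not on any particular edge of $H$ incident to $u$. Consequently, for any two vertices $u,v$ of $H$, \emph{every} edge of $H$ joining them is sent by the span decomposition to one and the same vertex $x:=e_u\cap e_v$ of $X$, so all these edges belong to the single subgraph $H_x$. Since $H_x$ embeds in $L_x$, and $L_x$ is a simple graph --- directly, as the link of a vertex in a \emph{simplicial} $2$-complex; or, using the hypotheses of the ST lemma, because nonpositive curvature forces the girth of $L_x$ to be at least $2\pi$ whereas $2\pi/3$-convexity bounds every edge of $L_x$ by $2\pi/3$, ruling out combinatorial cycles of length $\leq 3$ --- the subgraph $H_x$ carries at most one edge between $u$ and $v$ and no loop. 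As this holds for every $x\in\spn(C)$, the graph $H$ is simple.

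To conclude, recall that $C$ is thick, so every vertex of $H$ has degree at least $3$, and $H$ is non-empty. Since $H$ is simple, any vertex of $H$ has at least $3$ pairwise distinct neighbours, so $H$ has at least $4$ vertices. Summing valences, $2\,|E(H)|=\sum_{v\in V(H)}\deg(v)\geq 3\,|V(H)|\geq 12$, and therefore $H$ has at least $6$ edges.

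The only step calling for care is the simplicity of $H$ --- in particular the observation that parallel edges of $H$ are genuinely collapsed by the span decomposition, which rests on $e_u$ and $e_v$ being determined by $u$ and $v$ alone; everything else is elementary. Note, incidentally, that this argument uses neither the non-conicality of $C$ nor the inequality $|\spn(C)|\neq1$ recorded just above.
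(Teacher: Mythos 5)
There is a genuine gap, and in fact the central intermediate claim of your argument is false in the setting of the ST lemma: the nerve $H$ need \emph{not} be a simple graph. The graph $S$ appearing in the conclusion of Lemma \ref{L - minimal linking graph} is precisely a square with two opposite \emph{double} edges, and such collars really occur (they are the separating collars of the double covers studied in \ts\ref{S-double cover}; see Proposition \ref{P - 2cover final}), so any argument proving simplicity of $H$ would contradict the ST lemma itself. The flawed step is your assertion that every edge of $H$ joining $u$ and $v$ is assigned by the span decomposition to the single vertex $e_u\cap e_v$, hence lies in one subgraph $H_x$. In this paper ``simplicial'' means that the faces are triangles, not that $X$ is a simplicial complex in the strict sense: distinct edges of $X$ may share both endpoints, triangles may have identified vertices, and links are not automatically simple (this also invalidates your parenthetical ``directly, as the link of a vertex in a simplicial $2$-complex''; only your convexity argument for the simplicity of $L_x$ is valid here). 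Consequently $e_u\cap e_v$ need not be a single vertex, and the span vertex attached to an edge $e$ of $H$ is determined by the corner of the particular triangle crossed by the strip $C(e\times(0,1))$, not by the pair $(u,v)$ alone. Parallel edges of $H$ typically cross triangles cornered at \emph{different} vertices of $X$ and land in different pieces of the span decomposition --- exactly as in Lemma \ref{L-minimal collar spans}, where each double edge of $S$ contributes one edge to $H_x$ and one to $H_y$. Your counting at the end ($v\geq 4$, then $e\geq \tfrac32 v\geq 6$) depends entirely on simplicity, so both bounds are left unproved.

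What is true, and what the paper uses, is only that each individual piece $H_x$ is simple and triangle-free, since it embeds in $L_x$ and nonpositive curvature together with $\tfrac{2\pi}{3}$-convexity forbids loops, double edges and $3$-cycles in the links. The paper's proof then runs a case analysis on the small possibilities $(v,e)$ allowed by thickness: it first observes that $|\spn(C)|\neq 1$ (non-conicality) and that $|\spn(C)|\geq 3$ already forces $v\geq 4$, so in the critical cases $|\spn(C)|=2$; a loop of $H$ would give a loop in some $H_x$; for $v=2$ the at least three parallel edges would force, by pigeonhole over the two pieces, a double edge in $H_x$ or $H_y$; and for $v=3$ one rules out triple edges the same way and then finds a $3$-cycle inside one of the two pieces. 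Note that the hypotheses you explicitly set aside (non-conicality, i.e.\ $|\spn(C)|\neq 1$, and the dichotomy on $|\spn(C)|$) are exactly what replaces the false simplicity claim; their apparent dispensability in your argument was a symptom of the error rather than a bonus.
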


\begin{proof}
Let  $H$ denote the nerve of $C$. If $v$ denotes the number of vertices, then since $C$ is thick,  $H$ has $e\geq \frac 3 2 v$ edges. The three minimal cases are:

\begin{enumerate}[a)]
\item $v=1$ and $e\geq 2$
\item $v=2$ and $e\geq 3$
\item $v=3$ and $e\geq 5$.
\end{enumerate}
which we will now show are not possible.

Note that if $|\spn(C)|\geq 3$, then $v\geq 4$, since the union of three edges corresponding to  different span vertex cannot form a loop in $H$. Therefore in the three above cases $|\spn(C)|= 2$.

The graph $H$ does not contain a loop, for if it does then so does one of the graphs $H_x$ in the span decomposition, and since $H_x\subset L_x$, this contradicts nonpositive curvature as link edges have length $<\pi$ by strict convexity. This takes care of case a), and shows that every edge in the other cases has distinct extremities.

In the second case b),  at least one of the two graphs $H_x$ or $H_y$, $x,y\in \spn(C)$, must also contain a double edge, which in turn forces the link of $x$, or that of $y$, to have a double edge. The same strict convexity argument then applies.

In the last case, the graph $H$ cannot contain a triple edge, for otherwise since $|\spn(C)|= 2$ one of the two graphs $H_x$ or $H_y$, $x, y\in \spn(C)$ would contain a double edge.  Therefore we have either $v=3$ and $e=5$, or $v=3$, and $e=6$, corresponding to a triangle with, respectively, two or three double edges, and at least one of the two span graphs, say $H_x$, is a triangle (with simple edges).  This contradicts  the link condition by $\frac {2\pi}3$-convexity.

 Therefore,  $v\geq 4$ and $e\geq 6$.
\end{proof}

 \begin{proof}[Proof of the ST lemma]
It follows from the previous lemma that if $H$ is as in the ST lemma, and $H$ is a minimal solution, then $H$ is a connected cubic graph with 4 vertices and 6 edges. There are precisely two such graphs, $T$ and $S$. We will see in the forthcoming sections that these graphs indeed occur as nerves of collars.  
\end{proof}

\begin{remark}\label{R -systolic}
It is clear from the proof that the ST lemma remains true when nonpositive curvature and $\frac{2\pi}3$-convexity are replaced by a rather weak \emph{systolic condition}, namely, that $X$ is a  simplicial 2-complex whose links have girth $\geq 4$. (Note that the corresponding type $A(4)$ is countably infinite.) For the purpose of rank interpolation, which is primarily concerned with questions on nonpositively curved spaces and CAT(0) groups, the metric version is directly useful (and in the situations we have in mind,  the $\frac{2\pi}3$-convexity assumption is always satisfied).  There are also more general versions of the lemma, with a modified classification, when the systolic assumption is further relaxed.
\end{remark}

We will also need the following result.

\begin{definition}
A collar in $X$ is of type $S$ (resp.\ $T$, resp.\ $S$$T$) if its nerve is isomorphic to $S$ (resp.\ $T$, resp.\ $S$$T$).
\end{definition}

\begin{lemma}[Span decomposition of minimal collars.]\label{L-minimal collar spans} If $C$ is a  collar in $X$ of type $S$$T$ with span 
\[
\spn(C)=\{x,y\}
\] 
and if $X$ is $\frac \pi 2$-convex, then in the span decomposition the two graphs 
\[
H = H_x\cup H_y
\] 
are isomorphic to a path of length 3.  

Furthermore, the span decompositions are given by:
 \begin{enumerate}[a)]
 \item  
 \[
 S= \begin{tikzpicture}[baseline=2.7ex]
 \coordinate (A) at (0,0);
 \coordinate (B) at (1,0);
 \coordinate (C) at (1,1);
 \coordinate (D) at (0,1);
    \draw[dotted] (A) -- (B);
    \draw (C) -- (D);
    \draw[dotted] (A) to [bend left] (D);
    \draw[dotted] (B) to [bend left] (C);
    \draw (A) to [bend right] (D);
    \draw (B) to [bend right] (C);
  \end{tikzpicture}
 \]
 \item 
 \[
 T= \begin{tikzpicture}[baseline=2ex]
 \coordinate (A) at (0,0);
 \coordinate (B) at (1,0);
 \coordinate (C) at (.5,.866);
 \coordinate (D) at (.5,.2886);
    \draw[dotted] (A) -- (C);
    \draw (A) -- (B) -- (C);
  \draw (A) -- (D); \draw[dotted]  (B) -- (D); \draw[dotted]  (C) -- (D);
  \end{tikzpicture}
 \]
 \end{enumerate} 
 where (say) the dotted subgraph corresponds to $H_x$ and its complement to $H_y$.
\end{lemma}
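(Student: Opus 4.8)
The plan is to feed the span decomposition (the surjection $H^1\surj\spn(C)$ and the identifications of each $H_x$ with a subgraph of the link $L_x$, established just before the ST lemma) into a girth argument. Since $C$ is of type $ST$, its nerve $H$ is one of the graphs $S$ or $T$, both of which are cubic graphs on $4$ vertices with $6$ edges; and since $\spn(C)=\{x,y\}$ the span decomposition partitions the edge set, $E(H)=E(H_x)\sqcup E(H_y)$, with combinatorial embeddings $H_x\hookrightarrow L_x$ and $H_y\hookrightarrow L_y$ into the links of $x$ and $y$.

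First I would record the girth bound. In a nonpositively curved metric simplicial $2$-complex each link $L_z$ has girth $\geq 2\pi$ in the angular metric, while $\frac\pi2$-convexity makes every edge of $L_z$ (a face angle at $z$) shorter than $\pi/2$; hence every cycle of $L_z$ uses at least $5$ edges. Pulling this back along the monomorphisms $H_x\hookrightarrow L_x$ and $H_y\hookrightarrow L_y$ (which send a $k$-cycle to a $k$-cycle), neither $H_x$ nor $H_y$ contains a cycle of length $\leq 4$; but $H$ has only $4$ vertices, so every cycle of a subgraph of $H$ has length $\leq 4$, and therefore $H_x$ and $H_y$ are forests.

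A count now finishes the first claim. A forest on $\leq 4$ vertices has $\leq 3$ edges, and $|E(H_x)|+|E(H_y)|=6$, so each has exactly $3$ edges; a $3$-edge forest has $c+3$ vertices if it has $c$ components, so being a subgraph of the $4$-vertex graph $H$ it is connected and spans all of $H$, i.e.\ it is a spanning tree of $H$. A spanning tree of a $4$-vertex graph is either the path $P_4$ or the star $K_{1,3}$; but if $H_x$ were a star with centre $v$, then all $3$ edges of $H$ at $v$ would lie in $H_x$, leaving $v$ isolated in $H_y$ and contradicting that $H_y$ is also a spanning tree. Hence $H_x\cong H_y\cong P_4$, a path of length $3$.

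For the ``furthermore'' part I would run the short finite enumeration of the ways $E(H)$ can be split into two copies of $P_4$, treating $H=T$ and $H=S$ separately. For $T=K_4$ the complement of any Hamiltonian path is again a Hamiltonian path and $\Aut(K_4)=S_4$ acts transitively on such complementary pairs, so up to isomorphism the decomposition is the one displayed. For $S$ (the $4$-cycle $A,B,C,D$ with the edges $AD$ and $BC$ doubled), a part containing both copies of a double edge would contain a $2$-cycle rather than a path, so the two copies of each double edge, and likewise the two single edges, must be split between the two parts; this leaves exactly the displayed decomposition, up to $\Aut(S)$. The only genuine bookkeeping is this last step, and it is elementary; the conceptual point is that $\frac\pi2$-convexity (rather than the $\frac{2\pi}3$-convexity of the ST lemma) is precisely what is needed to exclude $4$-cycles in the links — without it, $H_x$ or $H_y$ could be a $4$-cycle and the statement would fail.
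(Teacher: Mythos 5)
Your proof is correct and follows essentially the same route as the paper: use nonpositive curvature (girth $\geq 2\pi$) together with the convexity bound on face angles to exclude short cycles in $H_x,H_y\subset L_x,L_y$, conclude they are forests, and then count edges to force two complementary spanning paths of length $3$. The only difference is cosmetic: you run the $\frac\pi2$-convexity bound uniformly for both $S$ and $T$ and spell out the steps the paper leaves as ``easy to check'' (excluding the star spanning tree via cubicity, and the enumeration of complementary path pairs), which is a welcome amount of extra detail but not a different argument.
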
 

\begin{proof}
Since $H$ contains either a double edge or a cycle of length 3, an argument similar to that in the previous lemma shows, using $\frac{2\pi}3$-convexity, that $|\spn(C)|\neq 1$, i.e.\ $x\neq y$.

Let $H_x$ and $H_y$ be the corresponding graphs, and consider the case of $T$ first. By  $\frac{2\pi}3$-convexity, neither $H_x$ nor $H_y$ can contain a cycle of $T$, and they must therefore be (a priori possibly disconnected) trees. However, it is easy to check that in that case both $H_x$ and $H_y$ must have exactly 3 edges, and must be connected. The drawing shows the only possible embeddings up to graph isomorphism.  

Consider then the case of collars of type $S$. By strict convexity, neither $H_x$ nor $H_y$ can contain a double edge, so both of them have at least two edges. We distinguish two cases. In the first case, $H_x$ has two edges and $H_y$ four. It follows that $H_y$ corresponds to a circle of length 4 in $H$, which contradicts $\frac \pi 2$-convexity. Therefore both $H_x$ and $H_y$ must have 3 edges. Again, the drawing shows the only possible embeddings up to graph isomorphism.  
\end{proof}

\begin{definition}
We say that a collar is  \emph{treeable} if the span decomposition of its edge set consists of maximal subtrees.  
\end{definition}

Lemma \ref{L-minimal collar spans} shows that collars of type $S$$T$ are treeable.

\begin{proposition}Let $X$ be a nonpositively curved $\frac {\pi}2$-convex metric simplicial 2-complex and $C$ be a boundary injective  treeable collar in $X$ spanning 
  two vertices. Then $C$ is a $h$-collar.
\end{proposition}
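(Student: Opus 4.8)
The goal is to show that a boundary injective treeable collar $C$ spanning exactly two vertices $x,y$ in a nonpositively curved $\frac\pi2$-convex simplicial 2-complex $X$ is an $h$-collar, i.e.\ that the map $t\mapsto C(\cdot,t)$ is a homotopy equivalence from $\del^-C$ to $\del^+C$. The natural strategy is to work entirely with the span decomposition $H^1\surj \spn(C)=\{x,y\}$, $H=H_x\cup H_y$, supplied before the statement. Since $C$ is treeable, $H_x$ and $H_y$ are maximal subtrees of $H$; in particular each is connected and spans all vertices of $H$, so $H$ is obtained from a tree by adding finitely many edges, and the inclusions $H_x\inj H$ and $H_y\inj H$ induce (after collapsing) the homotopy type of $H$ as a wedge of circles, one per ``extra'' edge. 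I would first record this combinatorial picture precisely.

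The core of the argument is to identify $\del^-C$ and $\del^+C$ geometrically. Up to replacing $C$ by its dual, I would argue that as $t$ runs from $0$ to $1$ the collar ``retracts'' first onto the star of $x$ and the boundary $\del^-C$ sits inside the link $L_x$ (more precisely, $\del^-C$ is the image of $H$ under the map that collapses each edge of $H_x$ to its attaching vertex in $\spn(C)$ composed with the germ embedding $M_x\inj X$), and symmetrically $\del^+C$ sits inside $L_y$ via $H_y$. Concretely: an edge of $H$ lying in $H_x$ corresponds to a triangle of $C$ having a vertex at $x$; its ``$\del^-$'' edge is the opposite edge of that triangle, which lies in $L_x$, while its ``$\del^+$'' side degenerates onto the vertex. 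So $\del^-C$ is the quotient graph $H/{\sim_x}$ where $\sim_x$ collapses the edges of $H_x$, and $\del^+C=H/{\sim_y}$. Here is exactly where boundary injectivity enters: it guarantees that no two distinct edges of $H$ get identified to a single edge of $\del^-C$ (an edge of $\del^-C$ belonging to two faces of $C$ would be precisely such a coincidence), so $\del^-C\to X$ is an embedding of the quotient graph $H/{\sim_x}$ and its edges biject with $E(H)\setminus E(H_x)$, and similarly for $\del^+C$.

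Now the homotopy statement becomes pure graph theory. Collapsing a maximal subtree of a connected graph is a homotopy equivalence; hence $H\surj H/{\sim_x}=\del^-C$ and $H\surj H/{\sim_y}=\del^+C$ are both homotopy equivalences, and the map $t\mapsto C(\cdot,t)$, restricted from the extension $H\times[0,1]\to\overline C$, realizes the composite $\del^-C\xleftarrow{\sim} H \xrightarrow{\sim}\del^+C$ up to homotopy. Therefore $t\mapsto C(\cdot,t)$ is a homotopy equivalence between $\del^-C$ and $\del^+C$, which is the definition of $h$-collar (Definition \ref{D - h -collar}). I would also note both boundary graphs are homotopy equivalent to a wedge of $|E(H)|-|E(H_x)|=|E(H)|-(|V(H)|-1)$ circles, so the statement is consistent and the two boundaries indeed have the same Betti numbers.

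**Expected main obstacle.** The genuinely delicate point is not the graph theory but making rigorous the claim that $\del^-C$ is \emph{exactly} the quotient $H/{\sim_x}$ and that the parametrization $t\mapsto C(\cdot,t)$ factors (up to homotopy) through $H$ in the asserted way — i.e.\ controlling how the collar closure $\overline C$, which need not be a product, degenerates onto its two boundaries. One must use that $X$ is simplicial and $C$ is simplicially closed, vertex-free, open, and treeable, so that each face of $C$ is a triangle with one vertex in $\spn(C)$ and the span decomposition tells us which vertex; then boundary injectivity prevents the degeneration from folding edges together. Handling the two types $S$ and $T$ uniformly (rather than case-checking via Lemma \ref{L-minimal collar spans}) is cleanest, but one should double-check that treeability alone — without minimality — still forces each triangle of $C$ to touch $\spn(C)$ in a single vertex; this follows because an edge of $H$ lies in exactly one of $H_x,H_y$ by the definition of the span decomposition, and that is the vertex the corresponding triangle is ``based'' at. Once that dictionary is nailed down, the homotopy equivalence is immediate.
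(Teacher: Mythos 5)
Your proposal is correct and follows essentially the same route as the paper's own proof: the span decomposition $H=H_x\cup H_y$ into maximal subtrees, boundary injectivity forcing the induced maps on edges to be injective (so each boundary is the quotient graph obtained by collapsing one subtree, a bouquet of circles), and the fact that collapsing a maximal subtree is a homotopy equivalence realized by the extension $H\times[0,1]\to\overline C$. The only blemish is a harmless labeling slip: under the convention you state (the $H_x$-edges degenerating on the $\del^+$ side), one gets $\del^-C\cong H/H_y$ and $\del^+C\cong H/H_x$ rather than the other way around, which is immaterial since the conclusion is symmetric under passing to the dual collar.
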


This provides an easy criterion for checking if a collar of type $S$$T$ is a $h$-collar in a given complex $X$ satisfying the assumptions.

\begin{proof}
Let us write $\spn(C)=\{x,y\}$ and let
 $H:= H_x\cup H_y$ be the span decomposition of the nerve $H$.
By assumption, both $H_x$ and $H_y$ are subtrees of $H$.

Since $C$ is treeable, $H\times [0,1]$ is a homotopy between $H/H_x$ and $H/H_y$, where $H/H_x$ and $H/H_y$ denotes respectively the retract of $H$ along $H_x$ and $H_y$.

The extension of $C$ to $H\times [0,1]$, namely the map
\begin{center}
\begin{tikzpicture}
\matrix (m) [
matrix of math nodes,
row sep=.7cm,
column sep=.7cm,
% text height=1.5ex,
% text depth=0.25ex
] {
H\times [0,1]\\
\overline C \\
};
\path[->] (m-1-1) edge node[left,above] {}(m-2-1);
\end{tikzpicture}
\end{center}
\noindent whose image is the collar closure $\overline C$ of $C$ in $X$, induces two maps
$H/H_x \surj \del^-C$ 
and $H/H_y \surj \del^+C$. Since $X$ is simplicial, these maps send edges to edges, and do so injectively, by boundary injectivity. Furthermore, since $H_x$ and $H_y$ are maximal subtrees, they are graph isomorphisms, and both $\del^-C$ and $\del^+C$ are bouquets of circles.  Thus  $H\times [0,1]\surj \overline C$ is a homotopy between $\del^-C$ and $\del^+C$.
\end{proof}

Finally, we observe:

\begin{lemma}\label{L - order4}
Under the assumptions of Lemma \ref{L-minimal collar spans},
\begin{enumerate}
\item there is a graph involution $\theta\colon S\to S$ that respects the span decomposition, namely, such that 
\[
 H_x\stackrel{\theta}\lra H_y.
\]
\item there is no graph involution $T\to T$ that respects the span decomposition. 
\item there exists an element  $\sigma\colon T\to T$ of order 4 that respects the span decomposition, namely, such that 
\[
 H_x\stackrel{\sigma}\lra H_y.
\] 
\end{enumerate}
\end{lemma}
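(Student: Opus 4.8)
The plan is to prove Lemma \ref{L - order4} by writing down the two cubic graphs $S$ and $T$ together with their span decompositions explicitly (as given in Lemma \ref{L-minimal collar spans}), and then analyzing the symmetry group of each decorated graph directly; since everything lives on graphs with $4$ vertices and $6$ edges, this is a small finite check, and the only subtlety is keeping track of which automorphisms exchange the two pieces $H_x$ and $H_y$ of the decomposition versus merely permuting edges within each piece.

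First I would set up coordinates. For the cylinder $S$ (a square with two opposite double edges), label the square vertices $1,2,3,4$ cyclically, with the single edges being $\{1,2\}$ and $\{3,4\}$ and the double edges joining $\{2,3\}$ and $\{4,1\}$; by Lemma \ref{L-minimal collar spans}(a), the span decomposition is $H_x=\{\text{edge }1\text{-}2\}\cup\{\text{one of the }\{4,1\}\text{ double edges}\}\cup\{\text{one of the }\{2,3\}\text{ double edges}\}$, a path of length $3$ from vertex $4$ through $1$, $2$ to $3$, and $H_y$ is the complementary path of length $3$ from $3$ through $4$ (wait: through the single edge $3$-$4$) back through the other two double-edge strands to $2$. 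For (1), I would exhibit the involution $\theta$ as the ``horizontal flip'' of the cylinder: the map fixing the axis and swapping the two circles, i.e. on vertices $1\leftrightarrow 2$, $3\leftrightarrow 4$, and on the two double edges swapping their strands appropriately. One checks directly that $\theta$ is a graph automorphism of $S$ of order $2$ and that it carries the path $H_x$ onto the path $H_y$ — this is immediate once the picture is drawn, because $\theta$ reverses the role of the two single edges $1$-$2$ and $3$-$4$, and these are precisely the ``middle'' edges of $H_x$ and $H_y$.

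For the tetrahedron $T$, by Lemma \ref{L-minimal collar spans}(b) the decomposition splits the $6$ edges of $K_4$ into two paths of length $3$ (Hamiltonian paths), and the drawing fixes them: if the vertices are $A,B,C,D$ then $H_x$ is the path $C$-$A$-$D$-$B$ (say) and $H_y$ is the complementary path, which is $A$-$B$-$C$-$D$ — the two paths share no edge and each uses all four vertices. For (2): any graph automorphism of $K_4$ is an arbitrary permutation of $\{A,B,C,D\}$, so I must show that no transposition-type element (no element of order $2$) sends the edge set of $H_x$ to that of $H_y$. The key observation is that a Hamiltonian path in $K_4$ is determined by its set of two endpoints (the two vertices of degree $1$ in the path) together with the middle edge; one checks the two endpoints of $H_x$ and of $H_y$, and I would show that any permutation swapping the endpoint pair of $H_x$ with the endpoint pair of $H_y$ and respecting the path structure must be a $4$-cycle, not an involution — concretely, the endpoints of $H_x$ are $\{C,B\}$ and of $H_y$ are $\{A,D\}$, and carrying the ordered path $C\!-\!A\!-\!D\!-\!B$ to an ordered traversal of $A\!-\!B\!-\!C\!-\!D$ (or its reverse $D\!-\!C\!-\!B\!-\!A$) forces the permutation $C\mapsto A, A\mapsto B, D\mapsto C, B\mapsto D$ (or $C\mapsto D, A\mapsto C, D\mapsto B, B\mapsto A$), both of which are $4$-cycles. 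This simultaneously proves (3): that $4$-cycle $\sigma=(C\,A\,B\,D)$ (in the appropriate convention) is an order-$4$ automorphism of $T$ carrying $H_x$ to $H_y$, and $\sigma^2$ then swaps $H_x$ with $H_y$ again but $\sigma^2$ is an involution that does \emph{not} respect the decomposition edgewise — rather, $\sigma$ itself is the required element, and one notes $\sigma^2$ stabilizes each of $H_x,H_y$ setwise while reversing them.

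The main obstacle, such as it is, is purely bookkeeping: making sure the labelling of the span decomposition matches the drawings in Lemma \ref{L-minimal collar spans} exactly (there is a genuine choice in which Hamiltonian path is ``dotted''), and verifying that the candidate maps are honest \emph{graph} automorphisms at the level of edges — for $S$ this means checking the two strands of each double edge are handled consistently, and for $T$ it is automatic since $K_4$ is complete. I expect no conceptual difficulty; the content of the lemma is really the assertion that $\mathrm{Aut}(S)$ acting on the bipartition into the two length-$3$ paths contains a transposition whereas the corresponding stabilizer in $\mathrm{Aut}(T)=S_4$ contains a $4$-cycle but no transposition, and this follows from the explicit small computations above. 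I would present it as: (i) fix models and labels; (ii) write $\theta$ and verify (1); (iii) enumerate which elements of $S_4$ can swap $H_x$ and $H_y$ for the tetrahedron, concluding both (2) and (3) at once.
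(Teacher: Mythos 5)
Your overall strategy---fix explicit models of $S$ and $T$ carrying the span decompositions of Lemma \ref{L-minimal collar spans} and do the finite check---is exactly the paper's (whose proof is terser still: ``(1) is clear'', and for $T$ it simply notes that $\Aut(T)$ is the symmetric group on the four vertices and exhibits the $4$-cycle $(1243)$). Your treatment of $T$ is correct and in fact slightly sharper than the paper's: observing that any automorphism of $K_4$ sending the Hamiltonian path $H_x$ to the complementary Hamiltonian path $H_y$ must carry an ordered traversal of one to an ordered traversal of the other, and that both resulting vertex maps are $4$-cycles, proves (2) and (3) in one stroke, where the paper merely asserts (2).

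There is, however, a concrete slip in part (1). With your labels (single edges $\{1,2\}$ and $\{3,4\}$, double edges at $\{2,3\}$ and $\{4,1\}$), the vertex map $1\leftrightarrow 2$, $3\leftrightarrow 4$ fixes each single edge and merely interchanges the two double-edge pairs; since $H_x$ is the part of the decomposition containing the single edge $\{1,2\}$ and $H_y$ the part containing $\{3,4\}$, this map cannot exchange $H_x$ and $H_y$---it contradicts your own justification that $\theta$ ``reverses the role of the two single edges.'' The involution you want is one swapping the two single edges, e.g.\ $1\leftrightarrow 4$, $2\leftrightarrow 3$ (or the half-turn $1\leftrightarrow 3$, $2\leftrightarrow 4$), extended to edges by exchanging the strands of the double edges so that the strand lying in $H_x$ is sent to the strand lying in $H_y$. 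With that one-line correction, part (1), and hence the whole lemma, is established by your argument.
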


\begin{proof}
(1) is clear. For (2), note that the  symmetry group being the symmetric group on the 4 vertices,  no involution respects the span decomposition. However, if the external vertices in $T$ are labelled 1,2,3, oriented counterclock wise, with 1 on top and 4 in the center, then the 4-cycle $\sigma=(1243)$ provides the desired transformation of  order 4. 
\end{proof}

\begin{remark}
If the involution $\theta\colon S\to S$ described in (1) extends to the collar $C$ of type $S$ (for example if $X$ is made of equilateral triangles), then $C$ is self-dual. 
\end{remark}

\section{Double covers}\label{S-double cover}
 
In this section we show that  collars of type $T$ do not occur among double covers in complexes of type $A_\MK$ or $\tilde A_{2,2}$.
Double covers are a convenient way to produce filling cobordisms of the given type. 

%:
Using a simple idea explained in \ts\ref{S-fake cover} below,  this observation can be used to construct ``fake double covers'', by substituting collars of type $S$ by collars of type $T$ in plain double covers. 

Collars of type $T$ can be ruled out by a simple symmetry argument using the involution acting on double covers. 

In fact, the argument leads naturally to new collars in such complexes, including collars of the following two types:

\begin{definition} Let us call $\Theta$-nerve  (resp.\ $\Theta'$-nerve) the graph
\[\Theta:=\ 
 \begin{tikzpicture}[baseline=-.5ex]
 \coordinate (A) at (0.3,0);
 \coordinate (B) at (1,1);
 \coordinate (C) at (1,0);
 \coordinate (D) at (1,-1);
 \coordinate (E) at (2,1);
 \coordinate (F) at (2,0);
 \coordinate (G) at (2,-1);
 \coordinate (H) at (2.7,0);
    \draw (A) -- (B);
    \draw (A) -- (C);
    \draw (A) -- (D);
    \draw (B) to [bend left] (E);
    \draw (C) to [bend left] (F);
    \draw (D) to [bend left] (G);
    \draw[dotted] (E) -- (H);
    \draw[dotted] (F) -- (H);
    \draw[dotted] (G) -- (H);
    \draw[dotted] (B) to [bend right] (E);
    \draw[dotted] (C) to [bend right] (F);
    \draw[dotted] (D) to [bend right] (G);
  \end{tikzpicture}
  \ \ \ \ (\text{resp. } \Theta^\times:=\ 
 \begin{tikzpicture}[baseline=-.5ex]
 \coordinate (A) at (0.3,0);
 \coordinate (B) at (1,1);
 \coordinate (C) at (1,0);
 \coordinate (D) at (1,-1);
 \coordinate (E) at (2,1);
 \coordinate (F) at (2,0);
 \coordinate (G) at (2,-1);
 \coordinate (H) at (2.7,0);
    \draw (A) -- (B);
    \draw (A) -- (C);
    \draw (A) -- (D);
    \draw (B) -- (F);
    \draw (C) -- (E);
    \draw (D) to [bend left] (G);
    \draw[dotted] (E) -- (H);
    \draw[dotted] (F) -- (H);
    \draw[dotted] (G) -- (H);
    \draw[dotted] (B) to  (E);
    \draw[dotted] (C) to  (F);
    \draw[dotted] (D) to [bend right] (G);
  \end{tikzpicture})
 \]
with the indicated span decomposition. A collar is  of type $\Theta$ (resp. $\Theta'$) if its nerve is the $\Theta$-nerve (resp. $\Theta'$-nerve). 
\end{definition}

These are larger collars which behave similarly  to collars of type $S$$T$ in this context.  We will see that collars of type $\Theta'$ also do not appear among double covers in complexes of type $A_\MK$ or $\tilde A_{2,2}$.

\bigskip

Let us fix some notation for this section:

\begin{itemize}
\item $X$ denotes a metric 2-complex which is either of type $A_\MK$ or $\tilde A_{2,2}$ with one vertex.
\item $X'$ denotes an arbitrary 2-cover of $X$
\item $C$ denotes the collar separating the two vertices $x$ and $y$ in $X'$, and $H$ refers to the nerve of 
$C$ with its nerve decomposition. 
\end{itemize}

\begin{lemma}\label{L - nerve 6 12}
The nerve $H$ has either 6 or 12 vertices. Furthermore,  $\Aut(H)$ contains an involution $s$ exchanging the two components of the span decomposition. 
\end{lemma}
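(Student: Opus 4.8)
The plan is to work directly with the separating collar $C$ between the two vertices $x$ and $y$ of the $2$-cover $X'$, using the span decomposition $H = H_x \cup H_y$ (from \ts\ref{S - Minimal linking graph}) together with the fact that $\Aut(H)$ must carry the covering involution of $X' \to X$. First I would recall that since $X$ has a single vertex, $X'$ has exactly two vertices $x, y$, which are exchanged by the Galois involution $\iota$ of the cover $X' \to X$; since $\iota$ stabilizes $C$ (being the unique separating collar, or the union of all triangles meeting both vertices, as in Lemma \ref{L - separating collar}), it induces a simplicial automorphism $s$ of the collar closure $\overline C$, hence of the nerve $H$, which necessarily swaps $H_x$ and $H_y$ because it swaps $x$ and $y$. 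This already gives the second assertion, provided we know $H$ is finite, so the real content is the vertex count.

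For the vertex count: each edge of $H_x$ corresponds to a triangle of $X'$ attached to $x$, hence (by the lemma identifying $H_x$ with a subgraph of the link $L_x$) $H_x$ embeds into the link of $x$. In type $\frac 7 4$ the link is the Moebius--Kantor graph (a cubic graph on $8$ vertices, girth $6$); in type $\tilde A_{2,2}$ the link is the incidence graph of the projective plane of order $2$, i.e.\ the Heawood graph (cubic, $14$ vertices, girth $6$). In both cases the link is vertex-transitive and cubic of girth $6$. Now I would argue: $C$ is an open collar (it is in $\Col(X')$), so by the openness argument in the proof of the lemma about $\spn(C)=\{x\}$, if $H_x$ contains a vertex of $L_x$ it contains all three edges of $L_x$ at that vertex; combined with thickness/openness this should force $H_x$ to be a union of vertex-stars, and then the key point is that $H_x$ cannot be too small (it must separate $x$ off) nor can it be all of $L_x$ (else the collar would be conical at $x$). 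The combinatorics of which unions of $k$ vertex-stars in a cubic girth-$6$ vertex-transitive graph can serve as a valid $H_x$ — together with the constraint that $s$ must identify $H_x$ with $H_y$ and that $H_x \cup H_y = H$ re-glues along the "outer" ends — is what pins the number of vertices of $H$ down to $6$ or $12$. Concretely I expect: $H_x$ is either a single vertex-star ($1$ vertex, $3$ edge-ends, giving $3+3 = 6$ nerve vertices after gluing with $H_y$), or a union of two disjoint vertex-stars at non-adjacent vertices ($2 + 2$ vertices, $6+6$ ends, giving $12$), with every other size excluded either by girth $6$ (a star-union at adjacent or distance-$2$ vertices would create a short cycle in $L_x$) or by the requirement that $\overline C$ be a genuine collar rather than conical or disconnected.

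The main obstacle will be the last step: ruling out the intermediate and larger possibilities for $H_x$ in a way that is clean rather than a long case analysis. I would handle it by exploiting the girth-$6$ hypothesis aggressively — any two distinct chosen vertices of $L_x$ at distance $\le 2$ would force $H$ to contain a cycle of length $\le 4$, which contradicts the nonpositive-curvature / $\frac{2\pi}{3}$-convexity link condition exactly as in the proofs of Lemma \ref{L - minimal linking graph} and Lemma \ref{L-minimal collar spans} — so the chosen vertices of $L_x$ form an induced coclique at pairwise distance $\ge 3$; in the Moebius--Kantor and Heawood graphs one checks that such sets have size $1$ or $2$ (diameter considerations: both graphs have diameter $4$, and a set of $\ge 3$ vertices pairwise at distance $\ge 3$ is impossible, or at least produces a forbidden configuration), which yields $6$ or $12$. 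I would also need to confirm that when $|H_x|=2$ the involution $s$ genuinely exists (it does: $\iota$ exchanges the two stars of $H_x$ with the two stars of $H_y$ compatibly), closing the argument.
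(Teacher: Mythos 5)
Your treatment of the second assertion is correct and is exactly the paper's argument: the index~$2$ subgroup gives a deck involution of $X'$ swapping $x$ and $y$; it preserves the canonical separating collar (the union of all triangles whose vertex set is $\{x,y\}$) and therefore induces $s\in\Aut(H)$ exchanging $H_x$ and $H_y$. The counting half, however, has a genuine gap: the claim that openness forces $H_x$ to be a union of full vertex-stars of $L_x$ is false. Openness of $C$ only guarantees that every triangle adjacent to an $x$--$y$ edge lies in the collar, i.e.\ that every vertex of the \emph{whole} nerve $H=H_x\cup H_y$ has degree $3$; those three nerve-edges are then distributed between $H_x$ and $H_y$ according to whether the triangle has its repeated corner at $x$ or at $y$. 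The argument you borrow from the case $\spn(C)=\{x\}$ uses precisely that all triangles at the given edge are attached to the single span vertex, and it breaks down when $|\spn(C)|=2$. Indeed, in every case that actually occurs here the span components are \emph{not} star-unions: they are paths of length $3$ (Lemma \ref{L-minimal collar spans}), tripods of height $2$, or disjoint segments (Lemmas \ref{L - H 12 edges} and \ref{L - octagonal collar}). With the star structure gone, your reduction to ``centers of $L_x$ pairwise at distance $\ge 3$'' collapses, and you are left with no upper bound on the size of $H$ at all. (Your bookkeeping also conflates vertices and edges: two glued $3$-edge stars would give a cubic graph with $6$ edges and $4$ vertices, not $6$ vertices; the dichotomy ``$6$ or $12$'' is used in the sequel as an \emph{edge} count, the nerves having $4$ resp.\ $8$ vertices.)

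The missing ingredient is an elementary corner count in the links, which is what the paper does. An edge of $H_x$ is a triangle with one corner at $x$ (where its two collar sides meet) and two corners at $y$, and symmetrically for $H_y$; since distinct corners give distinct link edges, $2e_x+e_y\le|L_y|\le 24$ and $2e_y+e_x\le|L_x|\le 24$, whence $e=e_x+e_y\le 16$. Cubicity of $H$ gives $e=\tfrac32 v$, a multiple of $3$, so $e\in\{6,9,12,15\}$, and your (correct) involution yields $e_x=e_y$, so $e$ is even and hence $6$ or $12$. No structural analysis of $H_x$ inside the Moebius--Kantor or Heawood graph is needed at this stage; that finer analysis only enters later, in Lemmas \ref{L-minimal collar spans} and \ref{L - H 12 edges}.
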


\begin{proof}
The nerve $H$ has $v$ vertices and $e= \frac 3 2 v$ edges.
Furthermore, if $e_x$ and $e_y$ denotes the number of edges of $H_{x}$ and $H_{y}$ respectively, so $e=e_x+e_y$, then we have
\[
2e_x+e_y\leq |L_y|\leq 24
\]
\[
2e_y+e_x\leq |L_x|\leq 24
\]
by assumption, therefore $e\leq 16$. Since $e$ is a multiple of 3, it follows that $e=6, 9,12,15$. 

By assumption, $ \pi _1(X)$ admits an index 2 subgroup acting $\frac 2 3$-transitively on $ X $ with quotient $ X'$, which gives the desired  involution 
\[
s \colon H\to H.
\]
It is clear that this involution takes $H_{x}$ to $H_{y}$. In particular, $e_x=e_y$ so $e$ is even,  narrowing down the  options to $e=6$ or $e=12$.  
\end{proof}

This proves our first claim:

\begin{proposition}\label{P - not type T}
The collar $C$ is not of type $T$.   
\end{proposition}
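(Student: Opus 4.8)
The statement follows almost immediately from Lemma \ref{L - nerve 6 12}, so the plan is to combine that lemma with the general structural facts about minimal collars established in the ST lemma section. First I would recall that, by Lemma \ref{L - nerve 6 12}, the nerve $H$ of the separating collar $C$ in the double cover $X'$ has either $6$ or $12$ vertices, and in either case $\Aut(H)$ contains an involution $s$ that exchanges the two components $H_x$ and $H_y$ of the span decomposition. The tetrahedron $T$, on the other hand, has only $4$ vertices, so on vertex count alone a collar of type $T$ is already excluded once we know $|\spn(C)|=2$ and $H$ has $6$ or $12$ vertices. Thus the first step is simply to observe that $T$ has $4\neq 6,12$ vertices.

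The second ingredient, which makes the argument robust and explains why the authors phrase the section around a \emph{symmetry} argument, is part (2) of Lemma \ref{L - order4}: there is no graph involution $T\to T$ respecting the span decomposition (the automorphism group of $T$ is $\Sym(4)$, and the element of smallest order carrying $H_x$ to $H_y$ has order $4$). So even if one did not want to invoke the vertex count, the involution $s$ produced in Lemma \ref{L - nerve 6 12} cannot exist on $T$ while exchanging $H_x$ and $H_y$. I would present both observations: the vertex-count obstruction and the involution obstruction, noting that either suffices.

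Concretely the proof is: suppose for contradiction that $C$ is of type $T$, i.e.\ $H\cong T$. Since $X$ is nonpositively curved and $\frac{2\pi}3$-convex (being of type $\frac 7 4$ or $\tilde A_{2,2}$, whose triangles are equilateral), the span analysis applies and $\spn(C)=\{x,y\}$ with $H=H_x\cup H_y$; by Lemma \ref{L - nerve 6 12} the nerve has $6$ or $12$ vertices, contradicting $|T^0|=4$. Alternatively, the involution $s$ of Lemma \ref{L - nerve 6 12} exchanging $H_x$ and $H_y$ would be a span-respecting involution of $T$, contradicting Lemma \ref{L - order4}(2). Either way we reach a contradiction, so $C$ is not of type $T$.

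I do not expect any real obstacle here: the substantive work was already done in Lemma \ref{L - nerve 6 12} (the counting inequality $2e_x+e_y\le|L_y|\le 24$ together with the covering involution) and in Lemma \ref{L - order4}. The only point requiring a word of care is making explicit that the covering involution on $X'$ restricts to the nerve in a span-respecting way and that this is exactly the hypothesis forbidden for $T$; but this is precisely what Lemma \ref{L - nerve 6 12} states, so the proof of Proposition \ref{P - not type T} is a one-line deduction.
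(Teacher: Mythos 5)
Your second argument is precisely the paper's proof: for a six-edge nerve, Lemma \ref{L-minimal collar spans} (applicable here since the complexes are built from equilateral triangles, so the convexity hypotheses hold) forces the span decomposition of $T$ to be the two embedded paths of length $3$, and the covering involution $s$ of Lemma \ref{L - nerve 6 12}, which exchanges $H_x$ and $H_y$, would then be a span-respecting involution of $T$, contradicting Lemma \ref{L - order4}(2). That route is correct and complete.

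Your first argument, however, fails, so the claim that ``either suffices'' is wrong. The statement of Lemma \ref{L - nerve 6 12} says ``6 or 12 vertices,'' but this is a slip for ``6 or 12 \emph{edges}'': its proof counts edges ($e=e_x+e_y$, $2e_x+e_y\leq 24$, $e$ a multiple of $3$, and $e_x=e_y$ via $s$, hence $e=6$ or $12$), and since the nerve satisfies $e=\tfrac32 v$ this gives $v=4$ or $v=8$; the surrounding text confirms the edge reading (``Assume that the nerve $H$ contains 6 edges,'' ``collars with 12 edges''). Taken literally as a statement about vertices, your counting argument would prove too much: it would exclude collars of type $S$ just as well (the graph $S$ also has $4$ vertices), contradicting the explicit computation in \ts\ref{S-fake cover} that the separating collar of the double cover $X'$ there is of type $S$, and contradicting Proposition \ref{P - 2cover final}. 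So the vertex-count ``obstruction'' must be discarded; only the involution obstruction carries the proposition, and on that point your proof coincides with the paper's.
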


\begin{proof}
Assume that that nerve $H$ contains 6 edges. By Lemma \ref{L - minimal linking graph}, $H$ as a nerve coincides with $S$ or with $T$ with the given span decomposition. The second case is ruled out by the fact that the involution $s$ must permute the two copies of the path of length 3 in the span decomposition. (Compare Lemma \ref{L - order4}.)
\end{proof}

We are lead to conside collars with 12 edges. In this case, we  assume furthermore that $H$ is connected. 
A priori, disconnected collars may appear, and the connected components, being collars themselves, must have at least 6 edges by Lemma 
\ref{L - minimal linking graph}. In particular both connected components must be of type ST. We will come back to this interesting situation later.

We need to explain now how collars of type $\Theta$ and $\Theta'$ arise. Then the collars of type $\Theta'$ can be ruled out using symmetry. 

\begin{definition}
We say that a collar is cubic if its nerve is isomorphic to the cube.
\end{definition}

The following proves our second claim, that collars of type $\Theta'$ do not appear. 

\begin{lemma}\label{L - H 12 edges}
Assume that $H$ has 12 edges, and that the components of the nerve decomposition are connected.  Then $C$ is either cubic or of type $\Theta$. In particular, $C$ is not of type $\Theta'$.
\end{lemma}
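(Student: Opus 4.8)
The plan is to analyse, purely combinatorially, the possible span decompositions $H = H_x \cup H_y$ of a nerve $H$ with $12$ edges whose two halves $H_x, H_y$ are connected subgraphs, subject to the constraints coming from the geometry of $X$. The basic numerical facts are already in place: by Lemma \ref{L - nerve 6 12} we have $v=8$, $e=12$, $e_x=e_y=6$, and there is an involution $s\colon H\to H$ with $H_x\stackrel{s}\lra H_y$. Since $C$ is thick, every vertex of $H$ has degree $\geq 3$; since $e=12$ and $v=8$ give average degree exactly $3$, the graph $H$ is \emph{cubic}. Moreover $H_x$ is a connected subgraph of the link $L_x$ with $6$ edges, and similarly for $H_y$; both $L_x$ and $L_y$ are the Moebius--Kantor graph or the incidence graph of the projective plane of order $2$, which in either case has girth $6$. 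So $H_x$ and $H_y$ are connected girth-$\geq 6$ graphs with $6$ edges, which forces each of them to be either a tree (a hexapod/caterpillar with $7$ vertices) or a single $6$-cycle with no extra edge.

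\textbf{Key steps.} First I would set up the incidence bookkeeping between the span decomposition and the vertex set of $H$: each of the $8$ vertices of $H$ is a transversal intersection point of an edge $e_u$ (contributing to $H_x$) and an edge $e_v$ (contributing to $H_y$), so the vertex set of $H$ is identified with a subset of $(\text{edges of }H_x)\times(\text{edges of }H_y)$ — wait, more precisely each vertex of $H$ lies on exactly one edge of $H_x$ and one edge of $H_y$ when both halves pass through it, but a vertex of $H$ incident to $H_x$-edges only could occur; thick + the structure of a span decomposition rules this out since every vertex of $H$ must have total degree $3$ and degrees split between $H_x$ and $H_y$. I would show each vertex of $H$ is incident to edges of \emph{both} $H_x$ and $H_y$ (otherwise, locally, $H$ restricted near that vertex would be contained in a single link, producing a short cycle or a vertex of degree $3$ inside $L_x$, contradicting girth). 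Then, since $H_x$ is cubic-free (it is a tree or a $6$-cycle), at each vertex of $H$ the degree splits as $1{+}2$ or $2{+}1$ between $H_x$ and $H_y$. Counting: $\sum_{v} \deg_{H_x}(v) = 2e_x = 12$ over $8$ vertices with each term in $\{1,2\}$ gives exactly four vertices of $H_x$-degree $2$ and four of $H_x$-degree $1$ — so $H_x$ is a tree with four leaves and four degree-$2$ vertices... but a tree with $7$ vertices can't have this profile. Hence $H_x$ (and $H_y$) is forced to be a $6$-cycle passing through $6$ of the $8$ vertices. Next I would determine how the two hexagons $H_x, H_y$ are glued along their common vertex set to form a cubic graph on $8$ vertices; each hexagon uses $6$ of the $8$ vertices, they overlap in $4$ of them (those of $H$-local type $2{+}2$... let me recount: degree-$3$ vertices split $1{+}2$, so a vertex on both cycles has $H_x$-degree $2$ and $H_y$-degree... this needs $3 = 2+2$, impossible, so a vertex cannot lie on both cycles with degree $2$ on each). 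Re-examining: a vertex of $H$ of $H_x$-degree $2$ has $H_y$-degree $1$, so it lies on $H_x$'s cycle and is an endpoint of a single $H_y$-edge; but $H_y$ is a $6$-cycle with no endpoints — contradiction again. The resolution is that not both halves are cycles: exactly one configuration survives, and enumerating it gives precisely the cube or $\Theta$; the point of the argument is that all the putative configurations are knocked out by girth-$6$ in the links \emph{except} these two, and $\Theta'$ in particular contains a configuration forcing a $4$-cycle or $5$-cycle inside some $L_z$.

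\textbf{The main obstacle.} The genuinely delicate part is the finite but intricate case analysis of how two $6$-edge subgraphs of girth-$\geq 6$ graphs can be amalgamated into a cubic graph on $8$ vertices respecting the span-decomposition incidence rules, and then checking which of the resulting isomorphism types embed in $X$ without creating a cycle of length $\leq 5$ in some link. Concretely I expect the bulk of the work to be: (i) classify, up to isomorphism, connected cubic graphs on $8$ vertices that admit an edge-$2$-colouring into two monochromatic subgraphs each of girth $\geq 6$ and connected, with a colour-swapping involution — this should leave only the cube $Q_3$ and the $\Theta$-graph; and (ii) for the would-be $\Theta'$ configuration, locate explicitly the short path in $H_x$ or $H_y$ (say a path of length $\leq 4$ whose endpoints, when the collar is traversed, land on the \emph{same} vertex of $L_x$ or $L_y$, or two vertices joined by a short link-edge-path), and derive a contradiction with the girth of the Moebius--Kantor graph / the order-$2$ projective plane incidence graph via the strict-convexity / $\frac{2\pi}{3}$-convexity argument already used repeatedly in \S\ref{S - ST Lemma}. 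I would organise (i) via the involution $s$: it pairs $H_x$ with $H_y$, so $H$ is determined by $H_x\subset L_x$ together with the gluing permutation, cutting the enumeration down substantially, and then (ii) is a direct inspection of the two remaining span decompositions, of which $\Theta$ is consistent and $\Theta'$ is not.
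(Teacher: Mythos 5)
There is a genuine gap at the heart of your combinatorial analysis. Your argument hinges on the claim that every vertex of $H$ is incident to edges of \emph{both} $H_x$ and $H_y$, equivalently that each half has maximum degree $2$ in $H$, and you justify this by saying that a vertex of degree $3$ inside $L_x$ would contradict girth. That justification is false: the links here (the Moebius--Kantor graph, the incidence graph of $P^2\FI_2$) are \emph{cubic} graphs, so a degree-$3$ vertex of $H_x\subset L_x$ creates no short cycle and no contradiction whatsoever. In fact both surviving configurations violate your claim: in the cube and in $\Theta$ the two halves of the span decomposition are tripods of height $2$, whose central vertices carry all three of their $H$-edges in a single half (this is visible in the paper's picture of $\Theta$, where one extreme vertex has three solid edges and the other three dotted ones). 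Your subsequent degree count, built on the false $1{+}2$ splitting, accordingly runs into the contradictions you yourself notice (``a tree with $7$ vertices can't have this profile'', ``contradiction again''), and at that point you do not repair the premise but simply assert that ``exactly one configuration survives'' giving the cube or $\Theta$. That assertion is precisely the content of the lemma, so the proof is not complete.

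For comparison, the paper's route is to let $a,b,c$ be the numbers of vertices of $H_x$-degree $1,2,3$ (degree $3$ allowed!), observe that cubicity of $H$ pairs $H_x$-degree-$1$ vertices with $H_y$-degree-$2$ vertices and that the involution $s$ of Lemma \ref{L - nerve 6 12} swaps the halves, whence $a=b$; then $a+2b+3c=12$ gives $a+c=4$, and connectedness ($a+b+c=7$) forces $a=b=3$, $c=1$, i.e.\ both halves are tripods of height $2$. One then checks there are exactly three ways to assemble two such tripods into a cubic graph on $8$ vertices, namely the cube, $\Theta$ and $\Theta'$, and $\Theta'$ is excluded because it admits no involution exchanging $H_x$ and $H_y$ --- a symmetry argument, not the girth/convexity argument you propose for ruling out $\Theta'$. (Your exclusion of the two-hexagons case by a degree-$4$ overlap is fine and matches the paper; it is the tree case where your approach breaks down.) If you want to salvage your write-up, drop the claim that the halves have maximum degree $2$, allow the split $3{+}0$, and redo the count along the lines above.
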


\begin{proof}
Using the notation of Lemma \ref{L - nerve 6 12}, if $e=12$, then $e_x=e_y=6$, and using the symmetry $s$ we see that $H_{x}$ and $H_{y}$ are either
\begin{itemize}
\item two circles of length 6, or 
\item two isomorphic trees with 6 edges.
\end{itemize}
Furthermore, by our assumption, the two trees in the second case are connected. 

The first case is in fact not possible, for one of the vertices of $H$ would have order 4. This same argument also works  in the second case to show that both trees can't be straight segments. 

Note that by definition the involution $s$ cannot fix an edge of $H$. Neither can it fix a vertex, since $H$ is cubic. Let $a,b,c$ be the number of vertices of order 1,2,3 respectively in $H_x$ and $H_y$. Note that $a=b$ since since every $x$-vertex of order 1 can be paired with a $y$-vertex of order 2. Furthermore,
\[
a+2b+3c=12
\]
(counting edges in $H_x$ or $H_y$) so
\[
a+c=4.
\]
If $H_x$ is connected then
\[
a+b+c=7
\]
so $a=b=3$ and $c=1$. Therefore $H_x$ and $H_y$ are tripods of height 2. It is not hard to check that there are three ways to combine these tripods together to give a cubic graph on 8 vertices. The three possibilities are the cube,  and the two  graphs $\Theta$ and $\Theta'$. However, the graph $\Theta'$ does not admit an involution that permutes $H_x$ and $H_y$.
\end{proof}

We now consider the case where the components of the span decomposition are disconnected. 

\begin{lemma}\label{L - octagonal collar}
If the components of the span decomposition are disconnected, then they have two connected components, both of which being segments of length 3. 
\end{lemma}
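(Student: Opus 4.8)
The plan is to combine the edge-count established in Lemma \ref{L - nerve 6 12} with the structural constraints already derived for the connected components. By Lemma \ref{L - nerve 6 12} we know $H$ has $12$ edges, hence $e_x = e_y = 6$, where $H_x$ and $H_y$ are the two parts of the span decomposition. Now assume, contrary to the connected case handled in Lemma \ref{L - H 12 edges}, that $H_x$ (and hence $H_y$, by the symmetry $s$) is disconnected. The goal is to show that each of $H_x$, $H_y$ breaks into exactly two pieces, each a segment (path) of length $3$.

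\textbf{Key steps.} First I would record the combinatorial identities from the proof of Lemma \ref{L - H 12 edges} that do not use connectedness: writing $a$, $b$, $c$ for the number of vertices of order $1$, $2$, $3$ in $H_x$, one has $a = b$ (pairing each order-$1$ vertex of $H_x$ with an order-$2$ vertex of $H_y$ under the involution $s$, which, being fixed-point-free on vertices since $H$ is cubic, realizes this pairing), and $a + 2b + 3c = 12$ (counting edge-endpoints in $H_x$), giving $a + c = 4$. Next, each connected component of $H_x$ is itself the span subgraph of a collar (a connected component of $C$), so by the ST lemma (Lemma \ref{L - minimal linking graph}) together with Lemma \ref{L-minimal collar spans}, each component has at least $3$ edges and, since $\frac{2\pi}{3}$-convexity forbids cycles inside $H_x$ (this is the strict-convexity/link-condition argument used repeatedly above — a cycle in $H_x \subset L_x$ would violate the girth bound), each component is a tree. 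A tree component with $k$ edges has $k+1$ vertices; if $H_x$ has $m \geq 2$ components with $k_1, \dots, k_m$ edges, then $\sum k_i = 6$ and the total vertex count is $6 + m$. Since each $k_i \geq 3$ and $\sum k_i = 6$, the only possibility is $m = 2$ with $k_1 = k_2 = 3$. Finally I would rule out the non-path tree on $3$ edges, namely the tripod (star $K_{1,3}$): a tripod has a vertex of order $3$, so $c \geq 2$ (one per component) in that scenario; combined with $a+c = 4$ this would force specific degree profiles, but more directly, a tripod component in $H_x$ has its central vertex of order $3$, and when the two spans $H_x$, $H_y$ recombine into the cubic graph $H$ every vertex of $H$ must have order exactly $3$ — a tripod's three leaves have order $1$ in $H_x$ and must each gain order exactly $2$ from $H_y$, while the tripod's degree-$3$ center already has full order, so the center is isolated in $H_y$, contradicting that $H_y$'s components are segments of length $3$ (which have no isolated vertices). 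Hence each $3$-edge component is a path of length $3$, as claimed. One can alternatively check directly that two tripods cannot be glued along the prescribed span pattern into a connected cubic graph realizing the fixed-point-free involution $s$ that swaps $H_x$ and $H_y$; I would present whichever of these two arguments is shortest.

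\textbf{Main obstacle.} The delicate point is justifying that each connected component of $H_x$ genuinely is the span subgraph of a \emph{bona fide} collar component of $C$ — i.e.\ that the decomposition of $C$ into connected pieces is compatible with the span decomposition — so that the ST lemma's minimality bound (at least $3$ edges, no internal cycles) applies componentwise. This is essentially the remark made right after Lemma \ref{L - nerve 6 12} (that connected components of a collar are collars, with at least $6$ edges and of type $S$T), specialized one level down to the span subgraphs; I would cite that observation and Lemma \ref{L-minimal collar spans} to conclude that each span-component is a path of length $3$. The remaining arithmetic ($\sum k_i = 6$, $k_i \geq 3$, $m \geq 2 \Rightarrow m = 2, k_i = 3$) is immediate, and the exclusion of the tripod is the short symmetry/degree argument above.
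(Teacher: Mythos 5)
There is a genuine gap, and it sits exactly at the step you flag as the ``main obstacle''. Your count $m=2$, $k_1=k_2=3$ rests on the claim that each connected component of $H_x$ has at least $3$ edges because it is ``the span subgraph of a collar component of $C$''. But in the setting of this lemma the nerve $H$ is connected (this is the standing assumption made right after Proposition \ref{P - not type T}), so $C$ has a single component and the components of $H_x$ are not nerves of collars at all: they are just pieces of the span decomposition of one $12$-edge nerve. The ST lemma (Lemma \ref{L - minimal linking graph}) bounds the size of nerves of minimal thick collars, and Lemma \ref{L-minimal collar spans} describes the span decomposition of a $6$-edge nerve of type $S$T; neither applies componentwise to $H_x$ here, and the remark after Lemma \ref{L - nerve 6 12} about disconnected collars concerns components of $H$, not of $H_x$. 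Without the bound $k_i\geq 3$, your arithmetic does not exclude splittings such as $P_5\sqcup P_1$ or $P_4\sqcup P_2$, which is precisely the nontrivial content of the lemma. The paper proceeds differently: keeping the identities $a=b$ and $a+2b+3c=12$, it adds the forest vertex count $a+b+c=6+k$ for $k$ components, which forces $k=2$, $a=b=4$, $c=2-k=0$; with $c=0$ both components are paths, and the remaining non-$P_3\sqcup P_3$ splittings are then killed by the same degree argument (``a vertex of $H$ would have order $4$'') used against straight segments in Lemma \ref{L - H 12 edges}. You already have all the ingredients for this ($a=b$, $a+c=4$, and treeness), but you never combine them with the vertex count, and your route cannot be repaired simply by citing the results you name.

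A secondary problem: your ``more direct'' exclusion of tripod components is circular --- it derives a contradiction from the assertion that the components of $H_y$ are segments of length $3$, which is the statement being proved (and a degree-$3$ vertex of $H_x$ simply does not occur in $H_y$, which by itself is no contradiction). This particular issue is avoidable, since the arithmetic you half-invoke does the job: a tripod component contributes $c\geq 1$, while the paper's count gives $c=0$ (equivalently, any tripod component violates $a=b$). Finally, the assertion that $\frac{2\pi}{3}$-convexity alone forbids cycles in $H_x$ is too strong --- it only rules out short cycles; in the disconnected case cycles are impossible because the links here have girth $6$ while each component of $H_x$ has at most $5$ edges, and in the connected case the paper excludes the $6$-cycles by the order-$4$ argument, not by convexity.
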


\begin{proof}
In the notation of the previous lemma, if $H_x$ has $k\geq 2$ connected components connected then
\[
a+b+c=6+k
\]
so $a=b=2+k$ and $c=2-k\geq 0$. This forces $k= 2$, $a=b=4$ and $c=0$. In that case we have either that
\[
H_x\simeq H_y\simeq P_3\sqcup P_3
\]
or that
\[
H_x\simeq H_y\simeq P_5\sqcup P_1
\]
where $P_n$ denotes the segment with $n$ edges. An argument similar to the case of a single segment $\simeq P_6$ in Lemma \ref{L - H 12 edges} shows that this is not possible in a connected graph $H$. 
\end{proof}

\begin{definition}
A collar is said to be \emph{octagonal} if its nerve is isomorphic to the following graph 
\[
\begin{tikzpicture}[shift ={(0.0,0.0)},rotate = 67.5]
\tikzstyle{every node}=[font=\small]

\coordinate (v1) at (1.0,0.0);
\coordinate (v2) at (0.71,0.71);
\coordinate (v3) at (-0.0,1.0);
\coordinate (v4) at (-0.71,0.71);
\coordinate (v5) at (-1.0,-0.0);
\coordinate (v6) at (-0.71,-0.71);
\coordinate (v7) at (0.0,-1.0);
\coordinate (v8) at (0.71,-0.71);
\draw[solid,thin,color=black,-] (v1) -- (v2);
\draw[solid,thin,color=black,-] (v2) to [bend left] (v3);
\draw[dotted,thin,color=black,-] (v2) to [bend right] (v3);
\draw[dotted,thin,color=black,-] (v3) -- (v4);
\draw[solid,thin,color=black,-] (v4) to [bend left] (v5);
\draw[dotted,thin,color=black,-] (v4) to [bend right] (v5);
\draw[solid,thin,color=black,-] (v5) -- (v6);
\draw[solid,thin,color=black,-] (v6) to [bend left] (v7);
\draw[dotted,thin,color=black,-] (v6) to [bend right] (v7);
\draw[dotted,thin,color=black,-] (v7) -- (v8);
\draw[solid,thin,color=black,-] (v8) to [bend left] (v1);
\draw[dotted,thin,color=black,-] (v8) to [bend right] (v1);
\end{tikzpicture}
\]
with the given span decomposition.
\end{definition}

Lemma \ref{L - octagonal collar} implies:

\begin{lemma}\label{L - H 12 edges}
Assume that $H$ has 12 edges, and that the components of the nerve decomposition are disconnected.  Then $C$ is octagonal. 
\end{lemma}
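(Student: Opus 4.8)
The plan is to combine the structural dichotomy established in Lemma \ref{L - octagonal collar} with the combinatorial constraint that $H$ is a connected cubic graph on 8 vertices carrying a span decomposition $H = H_x \cup H_y$ with an involution $s$ exchanging the two halves. Recall from Lemma \ref{L - octagonal collar} that under the hypothesis that the components of the span decomposition are disconnected, each of $H_x$ and $H_y$ has exactly two connected components, each a segment of length $3$; moreover the possibility $P_5 \sqcup P_1$ was already excluded there, so $H_x \simeq H_y \simeq P_3 \sqcup P_3$. So the task reduces to classifying the ways two disjoint copies of $P_3 \sqcup P_3$ can be glued along their vertex sets to produce a connected cubic graph on $8$ vertices that admits the exchanging involution $s$, and checking that the unique such graph is the octagonal nerve.

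First I would set up the gluing combinatorics. Label the four $P_3$-segments: $H_x$ contributes segments $a_0 a_1 a_2 a_3$ and $b_0 b_1 b_2 b_3$, while $H_y$ contributes $c_0 c_1 c_2 c_3$ and $d_0 d_1 d_2 d_3$. In a cubic graph every vertex has degree $3$; the interior vertices of each segment already have degree $2$ within their segment, so each interior vertex receives exactly one more edge, and it must come from an $H_y$-segment (since $H_x$ edges are exactly the segment edges). The endpoint vertices of the segments have segment-degree $1$, so each receives two further edges from $H_y$-segments. Now identify vertices: the span decomposition means the edge sets partition, but the vertex sets of $H_x$ and $H_y$ are identified via the map $H^1 \surj \spn(C)$ — more precisely $H_x$ and $H_y$ are two subgraphs of $H$ covering all edges, so every vertex of $H$ lies in both. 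Thus the $8$ vertices of $H$ are simultaneously the $8$ vertices of the $H_x$-segments and the $8$ vertices of the $H_y$-segments, and I must describe which $a_i, b_j$ coincide with which $c_k, d_\ell$. The degree-$3$ condition forces: each segment-interior vertex of $H_x$ is a segment-endpoint of $H_y$ (it gets $2$ from $H_y$), and each segment-endpoint of $H_x$ is a segment-interior vertex of $H_y$ (it gets $1$ from $H_y$, total $3$). So under the identification, the $4$ interior vertices of the $H_x$-segments are exactly the $4$ endpoints of the $H_y$-segments, and vice versa. This is a strong bipartite-type constraint.

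Next I would enumerate. The $4$ "interior-of-$H_x$ = endpoint-of-$H_y$" vertices must be paired up into two $H_y$-segments (choosing which two of the four form the ends of $c$ and which form the ends of $d$), and similarly the $4$ "endpoint-of-$H_x$ = interior-of-$H_y$" vertices are distributed as the two interior vertices of $c$ and the two of $d$; finally one must specify the order along each $H_y$-segment, i.e. which interior vertex is adjacent to which endpoint. Up to the symmetries of $H_x$ (swapping the two $H_x$-segments, and reversing each) this is a small finite check — a handful of cases. For each resulting graph I would (i) check connectedness, (ii) check that it is cubic (automatic from the degree bookkeeping), and (iii) check existence of an involution $s$ with $s(H_x) = H_y$. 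The claim is that exactly one case survives, namely the octagon with alternating single/double edges and the indicated span decomposition; the other gluings either disconnect $H$ (e.g. $H_x$ and $H_y$ each splitting into two "parallel" double-edge pieces that do not link up) or fail to admit the exchanging involution.

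The main obstacle I anticipate is the bookkeeping of the involution: it is easy to produce the octagonal graph and see it works, but ruling out the remaining gluings requires being careful that I have genuinely listed all of them up to isomorphism respecting the span decomposition, rather than up to bare graph isomorphism — two gluings can give isomorphic graphs but inequivalent span decompositions, and only one of the pair may admit $s$. I would organize this by first fixing the octagon as the "model", writing down its span decomposition explicitly (the two length-$3$ paths of solid edges forming $H_x$, say, and the two length-$3$ paths of dotted edges forming $H_y$), then arguing that any connected cubic $H$ with $H_x \simeq H_y \simeq P_3 \sqcup P_3$ satisfying the endpoint/interior incidence pattern above must, after relabelling, have exactly this adjacency structure — tracing the segments around shows they close up into a single $8$-cycle with the two "missing" connections realized as the doubled edges, which is precisely the octagonal nerve. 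Invoking Lemma \ref{L - octagonal collar} to supply the $P_3 \sqcup P_3$ structure is the key reduction that makes the final count short.
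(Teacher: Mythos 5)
Your reduction via Lemma \ref{L - octagonal collar} to $H_x\simeq H_y\simeq P_3\sqcup P_3$, together with the degree bookkeeping (every $H_x$-interior vertex is an $H_y$-endpoint and vice versa), is the right skeleton; the paper itself records no argument beyond ``Lemma \ref{L - octagonal collar} implies'', so you are filling in an omitted finite check. The genuine problem is the claim in your final paragraph that any connected cubic $H$ carrying such a decomposition must ``close up into a single $8$-cycle'': that statement is false, so the shortcut you propose there does not work. Concretely, glue the two $H_x$-paths $a_0a_1a_2a_3$ and $b_0b_1b_2b_3$ to the two $H_y$-paths $a_1\,b_0\,b_3\,a_2$ and $b_1\,a_0\,a_3\,b_2$. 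The result is connected, cubic, has no double edges, and is in fact the cube graph $Q_3$ --- certainly not the octagonal nerve --- even though it satisfies connectedness and your endpoint/interior incidence pattern. This configuration is eliminated only by the involution $s$ of Lemma \ref{L - nerve 6 12}: writing $Q_3$ in coordinates one sees that every automorphism carrying this $H_x$ to this $H_y$ has order $4$, never $2$ (the same phenomenon as for $T$ in Lemma \ref{L - order4}), so it cannot be induced by the deck involution of the double cover. Other non-octagonal gluings (e.g.\ ones with two double edges, or with triangles) are likewise ruled out only by the involution or, for some pairings, by disconnectedness.

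So the involution check you list as step (iii) is not a formality: it is the decisive step, and your proposal never carries it out --- it merely asserts that the bad cases ``fail to admit the exchanging involution,'' while your organizing argument then quietly drops the involution altogether and asserts a false intermediate statement. To repair the proof, keep your enumeration of pairings (which two $H_x$-interior vertices serve as the ends of each $H_y$-path, which two $H_x$-endpoints serve as its middle, and in which order), observe that the gluings in which all four end-edges of the $H_y$-paths are parallel to $H_x$-edges yield exactly the octagon or a disconnected graph, and then verify case by case that every remaining connected gluing admits no \emph{involution} exchanging $H_x$ and $H_y$ (existence of some exchanging automorphism is not enough, as the order-$4$ example above shows). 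With that verification written out the lemma follows; as organized in your last paragraph, the argument is incorrect.
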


To summarize, we have shown that:

\begin{proposition}\label{P - 2cover final} 
In a 2-cover (with connected nerve) of a 2-complex of type $A_\MK$ or $\tilde A_{2,2}$ with one vertex, the collar is either:
\begin{itemize}
\item  of type $S$
\item cubic
\item of type $\Theta$
\item octagonal
\end{itemize}
In particular, the collars of types $T$ and $\Theta'$ do not appear.
\end{proposition}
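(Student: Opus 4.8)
The statement is a bookkeeping consequence of the three preceding lemmas, so the plan is to assemble them. First I would recall the running hypotheses of this section: $X$ is a metric $2$-complex of type $\frac 7 4$ or $\tilde A_{2,2}$ with a single vertex, $X'$ is a $2$-cover of $X$, and $C$ is the collar separating the two vertices $x,y$ of $X'$, with nerve $H$ carrying its span decomposition $H = H_x\cup H_y$. Since both of the admissible types are built from equilateral triangles, $X$ (hence $X'$) is $\frac{2\pi}{3}$-convex and nonpositively curved, and in particular $\frac\pi2$-convexity fails but $\frac{2\pi}3$-convexity holds, which is what Lemma~\ref{L - nerve 6 12} and the subsequent lemmas actually use. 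The links $L_x,L_y$ are (up to the index-$2$ identification) the Moebius--Kantor graph or the incidence graph of the projective plane of order $2$, each with $|L|\le 24$ edges; this is the numerical input to Lemma~\ref{L - nerve 6 12}.

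The first step is to invoke Lemma~\ref{L - nerve 6 12}: the nerve $H$ is cubic with $e=\frac32 v$ edges, the inequalities $2e_x+e_y\le |L_y|\le 24$ and $2e_y+e_x\le |L_x|\le 24$ give $e\le 16$, the multiple-of-$3$ constraint restricts $e$ to $\{6,9,12,15\}$, and the existence of the covering involution $s\colon H\to H$ exchanging $H_x$ and $H_y$ forces $e_x=e_y$, hence $e$ even, hence $e\in\{6,12\}$. The second step handles $e=6$: by the ST lemma (Lemma~\ref{L - minimal linking graph}) the only candidates are $S$ and $T$ with their span decompositions, and Proposition~\ref{P - not type T} rules out $T$ because the involution $s$ would have to swap the two length-$3$ paths comprising the span decomposition of $T$, which (as recorded in Lemma~\ref{L - order4}(2)) is impossible; so $e=6$ forces type $S$. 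The third step handles $e=12$ by splitting on whether the components of the span decomposition are connected: if connected, Lemma~\ref{L - H 12 edges} (the ``cubic or $\Theta$'' version) says $C$ is cubic or of type $\Theta$, and never $\Theta'$ since $\Theta'$ admits no involution permuting $H_x$ and $H_y$; if disconnected, Lemma~\ref{L - octagonal collar} together with the following Lemma~\ref{L - H 12 edges} (the octagonal version) says $C$ is octagonal. Under our standing assumption that the nerve is connected, all four outcomes $S$, cubic, $\Theta$, octagonal survive, and $T$, $\Theta'$ are excluded.

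The conclusion is then immediate: combining the $e=6$ and $e=12$ analyses, every collar separating the two vertices of a $2$-cover with connected nerve is of type $S$, cubic, $\Theta$, or octagonal, and in particular is never of type $T$ or $\Theta'$. I do not anticipate a real obstacle here, since every nontrivial input has already been proved; the only mild subtlety is to make explicit why the hypotheses of Lemmas~\ref{L - nerve 6 12}--\ref{L - octagonal collar} are met, namely that the $\frac{2\pi}3$-convexity (used throughout) and the bound $|L|\le24$ on the link sizes hold for both $\frac 7 4$ and $\tilde A_{2,2}$, and that the index-$2$ subgroup furnished by the $2$-cover is exactly what produces the involution $s$ on $H$. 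If one wanted to be careful, the one genuinely content-bearing remark is that the $e=12$ disconnected case really does need the connectedness of $H$ (invoked inside Lemma~\ref{L - octagonal collar}'s proof to exclude the $P_5\sqcup P_1$ possibility), which is why the proposition is stated for connected nerves.
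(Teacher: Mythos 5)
Your assembly is correct and is essentially the paper's own argument: the proposition is stated there as a summary of Lemma~\ref{L - nerve 6 12} (which forces $e\in\{6,12\}$ and supplies the involution $s$), Proposition~\ref{P - not type T} for the $6$-edge case, and the two $12$-edge lemmas (connected span components giving cubic or $\Theta$ and excluding $\Theta'$, disconnected ones giving the octagonal nerve via Lemma~\ref{L - octagonal collar}), exactly as you organize it. One slip in your preamble: since the faces are equilateral triangles with angles $\pi/3<\pi/2$, the complexes of type $\frac 7 4$ and $\tilde A_{2,2}$ are in fact $\frac\pi2$-convex --- this convexity holds rather than fails, and it is needed (through Lemma~\ref{L-minimal collar spans}) to know the span decomposition of $S$ and $T$ consists of two paths of length $3$, which is what the involution argument excluding $T$ rests on.
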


\section{Group cobordisms}\label{S - cobordisms}

\begin{definition}\label{D - group cobordisms}
A \emph{group cobordism} is a 2-complex $X$ together with a pair $(C,D)$ of collars in $\Col(X)$ whose boundaries $\del^-C$ and $\del^+ D$ form a partition of the topological boundary of $X$:
\[
\del X= \del^-C\sqcup \del ^+ D.
\] 
\end{definition} 

We call $C$ and $D$ respectively the domain and range of $X$, and use the notation 
$X\colon C\to D$ to denote cobordisms of domain $C$ and range $D$. We view $C$ and $D$ as subcomplexes of $X$. Namely, we assume that $X$ is endowed with fixed injective 2-complex morphisms $L_X\colon \overline C\inj X$ and $R_X\colon \overline D\inj X$. The \emph{collar boundary} of $X$ is defined by 
\[
\del_c X:=\overline C +\overline D.
\]
Here and below, $+$ indicates set-theoretic union. If either $\overline C$ or $\overline D$ is empty, we say that $X$ is a null-cobordism or a filling.
If $X=C=D$ we call $X$ the identity cobordism from $C$ to $D$. In this paper we assume that every group cobordism $X\colon C\to D$ which is not the identity  is such that $\del^+C$ and $\del^-D$ are included in the interior of $X$.   

We shall call \emph{dual} of a group cobordism $X\colon C\to D$ is the group cobordism $X'\colon D'\to C'$ where $X$ coincide with $X'$ as a topological space, and $C',D'$ are the respective dual of $C$ and $D$. We have a partition
\[
\del X' = \del^- D' \sqcup \del ^+ C'.
\]

\begin{definition}\label{D - cobordism isomorphism}
A morphism (resp.\ isomorphism) of group cobordisms is 2-complex morphism (resp.\ isomorphism) that takes collar boundaries to collar boundaries. 
 \end{definition}

Let $A$ be a type (combinatorial or metric). 
We define a small category $\Bord_A$ as follows: 
\begin{itemize}
\item Object set: the set $\Col_A$ of all isomorphism classes of collars in 2-complexes $X$ of  type $A$, which we view as 2-complexes $C,D,E,\ldots$
\item Arrow set:  the set of all equivalence classes of group cobordisms of type $A$, where $X\colon C\to D$ and $Y\colon C\to D$ are said to be equivalent if there is a 2-complex isomorphism $\p \colon X\to Y$, such that $\p\circ L_X = L_Y$ and $\p\circ R_X = R_Y$.
\end{itemize}

The composition in $\Bord_A$ is defined by amalgamation as in a standard cobordism category. If $X\colon C\to D$ and $Y\colon D\to E$ are group cobordisms, then 
\[
Y\circ X\colon C\to E
\]
is a group cobordism, with domain $C$ and range $E$, given by
\[
Y\circ X = (X \sqcup Y)/\sim_{\overline D}
\]
where  $\sim_{\overline D}$ identifies the two copies of $\overline D$ using $L_Y\circ R_X^{-1}$, and 
\[
\del (Y\circ X) = \del^-C^-\sqcup \del^+E^+.
\]
This gives a well-defined arrow from $C$ to $E$, and composition is strict. The following lemma holds because our notion of type is local.

\begin{lemma}
If $X$ and $Y$ are group cobordisms of  type $A$ then $Y\circ X$ is a group cobordism of  type $A$.
\end{lemma}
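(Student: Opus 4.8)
The plan is to verify that the type of the amalgamated complex $Y\circ X$ coincides locally with the types of $X$ and $Y$, using crucially that ``type'' is defined by purely local data (links at interior vertices and the set of shapes), as set up in \ts\ref{S - types}. First I would recall the definition of $Y\circ X = (X\sqcup Y)/\sim_{\overline D}$, where $\sim_{\overline D}$ glues the two copies of $\overline D$ via $L_Y\circ R_X^{-1}$. Since this gluing is a simplicial (resp.\ simplicial isometric) identification of the collar closure $\overline D\subset X$ with $\overline D\subset Y$, the quotient $Y\circ X$ inherits a well-defined 2-complex (resp.\ metric 2-complex) structure, and its set of shapes is the union of the shapes appearing in $X$ and in $Y$, each of which lies in $A$ by hypothesis; this settles condition b) in the definition of $A_{Y\circ X}$.

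Next I would treat condition a), the links at interior vertices of $Y\circ X$. The key observation is that the vertices of $Y\circ X$ fall into three classes: those coming from the interior of $X$ and not lying on $\overline D$, those coming from the interior of $Y$ and not lying on $\overline D$, and those lying on the identified copy of $\overline D$. For a vertex of the first type, a neighbourhood in $Y\circ X$ is isometric (resp.\ simplicially isomorphic) to a neighbourhood in $X$, so its link is a link occurring in $X$, hence in $A$; symmetrically for the second type. For a vertex $v$ on $\overline D$, one must check that $v$ is \emph{not} in the topological boundary of $Y\circ X$ — this is where the collar axioms enter. Because $\del^+ D = \del^- D'$ sits in the partition $\del X = \del^- C \sqcup \del^+ D$ for $X\colon C\to D$, and likewise on the $Y$ side the copy of $\overline D$ corresponds to $\del^- D$ inside $\del Y = \del^- D\sqcup \del^+ E$ (after relabelling, since $Y\colon D\to E$), the gluing identifies two ``half-neighbourhoods'' of $v$ whose union is a full neighbourhood with no boundary. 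Concretely, $C$ being open in $X$ (and $D$ in $Y$) forces every face adjacent to an open edge of $\overline D$ to lie on the appropriate side, so the faces of $X$ and the faces of $Y$ around $v$ assemble into a closed disk's worth of faces, and $v$ becomes an interior vertex; its link is the union of the two arcs (subgraphs of $\Lk_X v$ and $\Lk_Y v$) glued along the nerve $H$, which is precisely the link that already occurs at the corresponding interior vertex of the ambient 2-complexes used to build $X$ and $Y$ as cobordisms.

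The main obstacle I anticipate is making the last point rigorous: one must argue that the link at a glued vertex $v\in\overline D$ in $Y\circ X$ equals a link that is genuinely already present (at an interior vertex) in some complex of type $A$ — not merely a formal union of two half-links. The clean way is to use that $X$ and $Y$ are themselves collars-in-2-complexes by definition (their collars come from embeddings into 2-complexes of type $A$), so the ``missing half'' of the link on each side is supplied by the ambient complex; since the collar closure $\overline D$ together with its two-sided neighbourhood data is an isomorphism invariant (Definition of isomorphic collars, and the boundary graphs $\del^\pm$), the reassembled link at $v$ is the link at an interior vertex of a type-$A$ complex obtained by gluing those two ambient pieces along $\overline D$. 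Hence $A_{Y\circ X}\subset A$, i.e.\ $Y\circ X$ is of type $A$; and one checks separately (trivially) that the collar boundary $\del_c(Y\circ X) = \del^- C + \del^+ E$ with $\del(Y\circ X)=\del^- C\sqcup\del^+ E$ exhibits $Y\circ X$ as a bona fide group cobordism $C\to E$, which is already recorded in the composition law preceding the lemma. I would close by remarking that the metric case is identical, replacing ``simplicial isomorphism'' by ``simplicial isometry'' throughout, since all gluings are by isometries and the link condition is checked on the angular metric.
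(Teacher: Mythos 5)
Your treatment of the shapes and of the vertices away from $\overline D$ matches the paper, but the crucial step --- the links at the vertices of $\overline D$ --- contains a genuine gap, and it is exactly the point where the paper's argument is different and simpler. The key observation you are missing is an asymmetry built into the definition of a cobordism: every vertex of $\overline D$ lies in $\del^- D$ or in $\del^+ D$, and since $\del X = \del^- C \sqcup \del^+ D$ while $\del Y = \del^- D \sqcup \del^+ E$, a vertex $v\in\del^-D$ is already an \emph{interior} vertex of $X$ (and a vertex of $\del^+D$ an interior vertex of $Y$). So its complete link, which is a link of $A$ because $X$ is of type $A$, is supplied entirely by one side of the gluing; the other side contributes only simplices of $\overline D$, which are \emph{identified} with simplices already counted rather than added. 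In particular the link of $v$ in $Y\circ X$ is the link of $v$ in $X$ (resp.\ in $Y$), and there is nothing to reassemble. Your picture of two ``half-neighbourhoods'' whose union is a full neighbourhood, with the link at $v$ appearing as a union of two arcs glued along the nerve, is not what happens, and it is precisely because that picture would leave you with a new, unidentified graph that you run into your ``main obstacle''.

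The patch you propose for that obstacle does not work: you argue that the reassembled link is a link occurring at an interior vertex of ``a type-$A$ complex obtained by gluing those two ambient pieces along $\overline D$''. But the assertion that a complex obtained by gluing two type-$A$ pieces along a common collar closure is again of type $A$ is the statement of the lemma itself (indeed, gluing the ambient pieces along $\overline D$ is exactly an instance of the composition $Y\circ X$), so the argument is circular; nor is there a canonical pre-existing type-$A$ complex in which the glued link is known to occur, since the objects of $\Bord_A$ only record collars up to isomorphism. Once you replace this by the interior-vertex observation above, the proof closes in two lines, as in the paper: shapes are inherited from $X$ and $Y$, and the only links to check are those at vertices of $\overline D$ not on $\del(Y\circ X)$, each of which is a link at a non-boundary vertex of $X$ or of $Y$, hence described by $A$. (Also note that you do not need to ``check that $v$ is not in the topological boundary of $Y\circ X$'': the boundary formula $\del(Y\circ X)=\del^-C\sqcup\del^+E$ is part of the definition of the composition; what the type condition requires is only that the links at the non-boundary vertices belong to $A$.)
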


\begin{proof}
 Since both $X$ and $Y$ are of embedded type $A$, and $\sim_D$ identifies the two copies of $D$, the set of shapes in $Y\circ X$ is indeed described by $A$. To check the link condition, we only need to consider  the vertices in the boundary of $\overline D$ (viewed as sitting in $Y\circ X$) which do not belong to the boundary of $Y\circ X$. If  $x\in \del^-D$ (resp.\ $y\in \del D^+$) is such a vertex, then it is an interior vertex of $X$ (resp.\ of $Y$), and therefore its link is indeed described by $A$. 
\end{proof}

Observe that the collar  closure of $C\colon H\times (0,1)\to X$ defines itself a group cobordism of  type $A$, which corresponds to the unit arrow in the category $\Bord_A$.

\begin{definition}
We call $\Bord_A$ the (unoriented) group cobordism category of type $A$.
\end{definition}

As in the classical case, the category $\Bord_A$ is a symmetric monoidal category, even if it is not, strictly speaking, a  cobordism category in the classical sense \cite{stong1968notes}. 
Nevertheless, the abstract cobordism relation (see \cite[Chapter I]{stong1968notes}) defines an equivalence relation on $\Bord_A$. 

\begin{definition}
Two objects $C,D$ in $\Bord_A$ are cobordant, in symbols, $C\equiv D$, if there exists two arrows $X, Y$ in $\Bord_A$  such that 
\[
\overline C + \del_c X \simeq \overline  D+ \del_c Y
\]
where $\simeq$ denotes collar isomorphism.
\end{definition}

Note that if $C\equiv D$ and $E\equiv F$ then $C+D\equiv E+F$.

\begin{lemma}
The relation $\equiv$ is an equivalence relation.
\end{lemma}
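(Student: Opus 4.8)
The plan is to check the three defining properties of an equivalence relation straight from the definition of $\equiv$, using only formal manipulations; the sole point that uses structure is transitivity, where I invoke the monoidal structure on $\Bord_A$.

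\emph{Reflexivity and symmetry.} For $C\equiv C$, take $X=Y$ to be the unit arrow at $C$ in $\Bord_A$ (the collar closure of $C$ viewed as a cobordism $C\to C$), which exists by the remark preceding the definition; then $\overline C+\del_c X$ and $\overline C+\del_c Y$ are literally the same collar, so the witnessing pair is $(X,X)$. For symmetry, suppose $C\equiv D$ is witnessed by a pair of arrows $(X,Y)$ with $\overline C+\del_c X\simeq\overline D+\del_c Y$. Since collar isomorphism $\simeq$ is itself a symmetric relation, $\overline D+\del_c Y\simeq\overline C+\del_c X$, so the pair $(Y,X)$ witnesses $D\equiv C$.

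\emph{Transitivity.} This is the substantive step. Suppose $C\equiv D$ is witnessed by $(X_1,Y_1)$ and $D\equiv E$ by $(X_2,Y_2)$, so that
\[
\overline C+\del_c X_1\simeq\overline D+\del_c Y_1,\qquad \overline D+\del_c X_2\simeq\overline E+\del_c Y_2 .
\]
I would add $\del_c X_2$ to the first isomorphism, then use the second isomorphism on the resulting middle term, chaining
\[
\overline C+\del_c X_1+\del_c X_2\ \simeq\ \overline D+\del_c Y_1+\del_c X_2\ \simeq\ \overline E+\del_c Y_1+\del_c Y_2 ,
\]
where I use that $+$ is a commutative, associative operation on collars compatible with $\simeq$ (so that the summands may be reordered). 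It then remains to realize $\del_c X_1+\del_c X_2$ and $\del_c Y_1+\del_c Y_2$ as collar boundaries of single arrows of $\Bord_A$: since $\Bord_A$ is symmetric monoidal with monoidal product given by disjoint union, $X_1\sqcup X_2$ and $Y_1\sqcup Y_2$ are arrows of type $A$, and by the definition $\del_c X=\overline C+\overline D$ of the collar boundary one has $\del_c(X_1\sqcup X_2)=\del_c X_1+\del_c X_2$ and similarly for $Y$. Hence $(X_1\sqcup X_2,\,Y_1\sqcup Y_2)$ witnesses $C\equiv E$.

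The only "obstacle" is the bookkeeping in transitivity: one must be sure that $+$ on collar closures is commutative and associative up to $\simeq$, and that $\del_c$ is additive under the monoidal product $\sqcup$ of cobordisms. Both are immediate from the definitions ($\del_c X=\overline C+\overline D$, and disjoint union of cobordisms disjointly unites their collar boundaries), and no curvature, convexity or type-specific input is needed beyond the already-recorded fact that $\Bord_A$ is closed under $\sqcup$ and that $\sqcup$ of arrows of type $A$ is again of type $A$.
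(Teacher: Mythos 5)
Your proof is correct and follows essentially the same route as the paper: reflexivity and symmetry are immediate, and transitivity is obtained exactly as in the paper by adjoining the second pair of cobordisms (the paper uses the arrows $X+Z$ and $Y+T$, i.e.\ the disjoint unions you call $X_1\sqcup X_2$ and $Y_1\sqcup Y_2$) and chaining the two collar isomorphisms. The only cosmetic difference is that the paper witnesses reflexivity with $X=Y=\emptyset$ rather than the unit arrow, which changes nothing.
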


\begin{proof}
$X=Y=\emptyset$ provides reflexivity, symmetry is obvious, and if $C\equiv  D$ and $D\equiv E$, then choosing arrows $X,Y,Z,T$ such that
\[
\overline C + \del_cX \simeq \overline D+ \del_cY\text{  and } \overline D + \del_cZ \simeq \overline E+ \del_cT
\]
then using the arrows $X+Z$ and $Y+T$ we see that
\[
\overline C+\del_c X + \del_cZ\simeq \overline D+ \del_c Y+\del_c Z\simeq \overline E+ \del_c T+\del_c Y
\]
so $C\equiv E$.
\end{proof}

\begin{definition}
Let $A$ be a type (topological or metric). The (unoriented) group cobordism monoid of type $A$ is the set
\[
\Omega_A:=\Bord_A^0/\equiv
\] 
where $\Bord_A^0$ denotes the object set and the monoid structure is given by
\[
[C]+[D]:=[C + D]
\] 
with neutral element $[\emptyset]$.
\end{definition}

It is obvious that:

\begin{lemma}
$\Omega_A$ is an abelian monoid.
\end{lemma}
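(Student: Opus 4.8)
The statement to prove is that $\Omega_A$, defined as $\Bord_A^0/\equiv$ with addition $[C]+[D]:=[C+D]$ and neutral element $[\emptyset]$, is an abelian monoid. The plan is to check the monoid axioms one by one, observing that each follows immediately from the corresponding property of disjoint union $+$ on 2-complexes together with the fact (already recorded just before the statement) that $\equiv$ is an equivalence relation compatible with $+$ in the sense that $C\equiv D$ and $E\equiv F$ imply $C+E\equiv D+F$.

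First I would verify that $+$ is well defined on equivalence classes: this is precisely the compatibility remark $C\equiv D,\ E\equiv F\ \Rightarrow\ C+E\equiv D+F$ noted in the text, so $[C]+[D]$ does not depend on the chosen representatives. Next, associativity: $(C+D)+E$ and $C+(D+E)$ are literally the same 2-complex (disjoint union is strictly associative as a set-theoretic operation on complexes and collars), hence their classes agree, giving $([C]+[D])+[E]=[C]+([D]+[E])$. Commutativity is the same observation — $C+D$ and $D+C$ are isomorphic collars (in fact equal up to reordering the disjoint summands), so $[C+D]=[D+C]$, i.e. $[C]+[D]=[D]+[C]$. For the identity, $\emptyset+C=C$ as a 2-complex, hence $[\emptyset]+[C]=[C]$; one should also note $\emptyset$ is itself a (degenerate) collar so $[\emptyset]\in\Omega_A$ makes sense, which is why the definition records it as the neutral element.

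There is essentially no obstacle here: the only thing requiring a sentence of care is that "$+$" throughout means set-theoretic disjoint union of collars viewed as objects of $\Bord_A$, and that disjoint union of two complexes of type $A$ is again of type $A$ (the link condition and shape conditions are local and unchanged by taking disjoint copies), so $\Omega_A$ is closed under addition. I would state the proof compactly as: well-definedness is the compatibility of $\equiv$ with $+$; associativity, commutativity, and the identity law for $+$ on classes are inherited verbatim from the corresponding identities $(C+D)+E=C+(D+E)$, $C+D\simeq D+C$, $\emptyset+C=C$ for disjoint union, passing to $\equiv$-classes. The main "obstacle" — really just the only bookkeeping point — is making sure one does not accidentally need a cancellation or inverse statement; the claim is only that $\Omega_A$ is a monoid, not a group, so no such thing is required, and indeed in general there is no reason for $[C]$ to have an inverse.

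\begin{proof}
Recall that $+$ denotes disjoint (set-theoretic) union of collars, viewed as objects of $\Bord_A$; the disjoint union of two complexes of type $A$ is again of type $A$, since the link and shape conditions defining a type are local and are unaffected by taking disjoint copies, so $+$ is an operation on $\Bord_A^0$.

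\emph{Well-definedness.} If $C\equiv C'$ and $D\equiv D'$, then $C+D\equiv C'+D'$, as noted after the definition of $\equiv$; hence $[C]+[D]:=[C+D]$ depends only on $[C]$ and $[D]$.

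\emph{Associativity and commutativity.} For any collars $C,D,E$ one has the strict identities $(C+D)+E=C+(D+E)$ and a canonical collar isomorphism $C+D\simeq D+C$ of disjoint unions; passing to $\equiv$-classes yields $([C]+[D])+[E]=[C]+([D]+[E])$ and $[C]+[D]=[D]+[C]$.

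\emph{Identity.} The empty complex $\emptyset$ is a (degenerate) collar with $\emptyset+C=C$ for every collar $C$, so $[\emptyset]+[C]=[C]$, and $[\emptyset]\in\Omega_A$ is a two-sided neutral element.

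Thus $(\Omega_A,+,[\emptyset])$ is an abelian monoid.
\end{proof}
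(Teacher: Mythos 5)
Your verification is correct and is exactly the routine check the paper has in mind: the paper states this lemma without proof (``It is obvious that\ldots''), relying on the compatibility remark $C\equiv D,\ E\equiv F\Rightarrow C+D\equiv E+F$ and the evident associativity, commutativity, and identity of disjoint union, which is precisely what you spelled out. Your extra sentence noting that disjoint union preserves type $A$ (locality of the link and shape conditions) is a reasonable bit of bookkeeping and does not diverge from the intended argument.
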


The following elementary proposition, combined with Prop. \ref{P - 2cover final}, allows to show that some classes vanish in $\Omega_\MK $ (and $\Omega_{\tilde A_2}$).

\begin{proposition}\label{P - vanishing collar in Omega} Let $A$ be a type. If $C$ be the separating collar in a complex of type $A$ with two vertices, then 
\[
[C]=[C']=0
\] 
in the group cobordism monoid $\Omega_{A}$. (Here $C'$ denotes the dual collar.) 
\end{proposition}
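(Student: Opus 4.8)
The plan is to exhibit an explicit null-cobordism for $C$. Let $X$ be a complex of type $A$ with exactly two vertices $x,y$, and let $C$ be the separating collar between them, so that by Lemma~\ref{L - separating collar} (or rather its mild-transitivity version) $X$ decomposes as
\[
X \simeq \left( M_x \sqcup M_y \sqcup \overline C\right)/\sim,
\]
where $M_x,M_y$ are the model geometries at the two vertices and the identifications glue $\del^-C$ into $M_x$ and $\del^+C$ into $M_y$. The idea is that the two halves $M_x\cup \overline C$ and $M_y\cup\overline C$ — more precisely, suitable open pieces realizing these as group cobordisms — provide the two arrows $X,Y$ required by the definition of the cobordism relation $\equiv$, witnessing $[C]=0$.

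First I would make precise the fillings. Consider the piece $P_x$ of $X$ obtained by removing the (open) core $\Co M_y$; this is an open subcomplex of $X$ containing $\overline C$, and because $X$ has type $A$ its links away from the topological boundary are described by $A$, so $P_x$ is a group cobordism of type $A$. Its topological boundary is $\del^- C$ together with $\del^+C$ — wait, rather, after removing the core of $M_y$ the piece $P_x$ has collar boundary exactly $\overline C$ on one side and is "capped off" by $M_x$ on the other, so $P_x\colon C \to \emptyset$ is a null-cobordism (a filling) with $\del_c P_x = \overline C$. Symmetrically, removing $\Co M_x$ gives a filling $P_y\colon C' \to \emptyset$ with $\del_c P_y = \overline C$, using the dual collar since the orientation of $C$ is reversed when viewed from the $y$-side. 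Then taking $X := P_x$ and $Y := \emptyset$ in the definition of $\equiv$, we get
\[
\overline C + \del_c P_x = \overline C + \overline C \simeq \emptyset + \del_c(\text{something})?
\]
This is not quite it; the cleaner route is: to show $[C] = 0 = [\emptyset]$ we need arrows $X,Y$ with $\overline C + \del_c X \simeq \overline \emptyset + \del_c Y = \del_c Y$. Take $X$ to be the filling $P_x\colon C\to\emptyset$ itself (so $\del_c X = \overline C$) — then $\overline C + \del_c X = \overline C + \overline C$ — and take $Y$ to be the cobordism $\overline C\sqcup P_x$ reorganized appropriately, or more simply observe the doubling trick: glue two copies of $P_x$ along $\overline C$ to realize $\overline C + \overline C$ as $\del_c$ of a genuine cobordism. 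I would spell this out: the double $DP_x := (P_x \sqcup P_x)/\sim_{\overline C}$ is a cobordism of type $A$ (by the lemma on composition of cobordisms, since the gluing is along a collar closure and type is local), and its collar boundary is... again $\overline C$? No — the double of a filling along its collar boundary has empty collar boundary. So instead: $\overline C$ itself, as a collar closure, is a unit arrow $C\to C$; then $P_x$ composed with nothing, versus the trivial cobordism, gives $\overline C + \del_c(\overline C\text{-unit}) \simeq \overline C + \overline C$ on one side and $\del_c(P_x\circ P_x') $ on the other where $P_x'$ is the dual. The bookkeeping is the only subtlety.

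The key steps, in order: (1) decompose $X$ into $M_x$, $M_y$, $\overline C$ using the two-vertex hypothesis and the decomposition from \S\ref{S - model geometry}; (2) define the two fillings $P_x$ and $P_y$ as $X$ minus an open core, and verify each is a group cobordism of type $A$ with collar boundary $\overline C$ (resp. $\overline{C'}$) — this uses locality of the type condition exactly as in the composition lemma; (3) feed these into the definition of $\equiv$, taking one of the witnessing arrows to be a filling and the other to be the appropriate union, to conclude $\overline C + \del_c X \simeq \del_c Y$, hence $[C] = [\emptyset] = 0$; (4) run the identical argument with the dual collar to get $[C'] = 0$, noting that removing $\Co M_x$ rather than $\Co M_y$ naturally produces the dual. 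The main obstacle I expect is step (3): getting the formal combination of arrows in the definition of $\equiv$ to match up $\overline C$ against the collar boundaries of genuine (not merely filling) cobordisms, i.e.\ making sure the "$+$" bookkeeping of collar boundaries is handled correctly so that $\overline C$ appears as $\del_c$ of something on the nose up to collar isomorphism. Once one realizes that a filling $X\colon C\to\emptyset$ already has $\del_c X = \overline C$, and that $\emptyset$ is a legitimate arrow with $\del_c\emptyset = \emptyset$, the relation $\overline C + \del_c\emptyset = \overline C = \overline\emptyset + \del_c X$ reading $[\emptyset]\equiv[C]$ falls out immediately — so the real content is precisely constructing the filling $P_x$ and checking its type, which is routine given the locality lemma.
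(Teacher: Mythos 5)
Your argument is essentially the paper's proof: the two-vertex complex splits along the separating collar as $X=(X^-\sqcup X^+)/\sim_{\overline C}$ into two fillings with $\del_c X^-=\overline C$ and $\del_c X^+=\overline{C'}$, and a filling together with the empty arrow witnesses $\overline C + \del_c\emptyset \simeq \del_c X^-$, i.e.\ $C\equiv\emptyset$ (and likewise $C'\equiv\emptyset$). The doubling and composition detours in the middle of your write-up are unnecessary; the clean bookkeeping you settle on in the final paragraph is exactly what the paper does.
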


\begin{proof}
By assumption we can write $X = (X^-\sqcup X^+)/\sim_{\overline C}$, where $\del_c X^- = \overline C$ and $\del_c X^+ = \overline C'$. 
\end{proof}

\begin{definition}
We say that a group of type $A$ is \emph{accessible by surgery} if it is isomorphic to the fundamental group of quotient of the form $X/\sim$, where $X\colon C\to C$ is a cobordism of type $A$, $C$ is an object in $\Bord_A$, and $\sim$ identifies the two copies of $C$ in $X$. 
\end{definition}

Accessibility raises  natural questions on $A$; are there, for instance, conditions on  $A$ which ensures that `most' (resp., `almost no') groups of  type $A$ are accessible by surgery  (`most' may refer to `all' or `all but finitely many', and `almost no' to `none', or `finitely many', for example).

\section{Fake double covers}\label{S-fake cover}
 
In this section we construct ``fake'' double covers for Moebius--Kantor complexes\ using a simple surgery in the following way: 
 \begin{enumerate}
 \item assume that $X'\surj X$ is a double cover of a complex $X$ of rank \sq\ with 1 vertex, whose separating collar $C$ is (say, connected) of type $S$
 \item choose a collar $D$ of type $T$ whose boundary is isomorphic to $\del C$:
 \[
 \del^-D=\del^-C\text{ and }\del^+D=\del^+D,
 \] 
 \item substitute $D$ to $C$ in $X'$. 
 \end{enumerate} 

 This  leads to a new complex $X''$, which is ``of fake rank \sq''.  
 Here is an explicit construction.

 The classification in \cite[\ts 4]{rd} provides several complexes $X$ of rank \sq\ with one vertex such that $H_1(X,\ZI/2\ZI)\neq 0$. One of the simplest examples is the complex (denoted $V_2^3$ in \cite[\ts 4]{rd}) defined by 
\[
X := [[1,1,3],[2,2,4],[1,5,2],[3,6,4],[3,7,6],[4,6,8],[5,7,8],[5,8,7]]
\]   
whose rational homology is reduced to $\ZI$. 
The corresponding group of rank \sq\ has a presentation of the following form: 
\[
\G := \langle s,t\mid           s^2t^3s^3t^2=   t^2  s^2,\
    t^2=s^2t^2s^2t^2s^{-2} ts
 \rangle.
\]
The group $\G$ admits an index 2 subgroup $\G'$, which appears as the fundamental group of a covering space $X'\surj X$ of degree 2 and can be explicitly computed to be:
\[
X' := [[1,11,3],[2,12,4],[1,15,12],[3,6,4],[3,7,6],[4,6,8],[5,7,8],[5,8,7],
\]
\[
[11,1,13],[12,2,14],[11,5,2],[13,16,14],[13,17,16],[14,16,18],[15,17,18],[15,18,17]]
\]  
where the new edges are labeled $1x$ for every edge with label $x$ in $X$. 

A direct computation shows:

\begin{lemma}
The separating collar $C$ in $X'$ is of type $S$.
\end{lemma}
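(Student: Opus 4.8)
The claim is purely computational: given the explicit complex $X'$ written as a list of triangles (triples of edge labels), one must identify the separating collar $C$ between the two vertices and check that its nerve is the graph $S$ (a $4$-cycle with two opposite doubled edges) with the correct span decomposition. First I would set up the combinatorial data. Each triangle $[a,b,c]$ in the list is a face whose three sides carry the edge labels $a,b,c$; since $X'$ is a $2$-cover of the one-vertex complex $X$, the two vertices of $X'$ — call them $x$ and $y$ — are the two lifts of the unique vertex of $X$. Each triangle is therefore a crossing face meeting both $x$ and $y$ (or only one of them, in which case it lies in a model geometry); one reads off from the covering data which edges run from $x$ to $x$, from $y$ to $y$, or from $x$ to $y$. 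The separating collar $C$, by Lemma \ref{L - separating collar}, is the union of the open triangles together with the open edges whose vertex-set closure is exactly $\{x,y\}$, i.e.\ the triangles having one side a genuine $x$--$y$ edge, together with the middle segments of those sides.

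Next I would compute the nerve $H$ and its span decomposition $H = H_x \cup H_y$ directly from the list. Concretely: enumerate the $x$--$y$ edges of $X'$; these middle segments are the edges of $H$. For the span decomposition, recall from \ts\ref{S - Minimal linking graph} that $H^1 \surj \spn(C) = \{x,y\}$ sends an edge of $H$ (a triangle $t$ in $C$) to whichever of $x,y$ is the vertex of $X'$ that the two ``non-$xy$'' sides of $t$ are incident to — so each triangle in $C$ has two sides at $x$ and one $xy$-side (contributing an edge to $H_x$) or two sides at $y$ and one $xy$-side (contributing to $H_y$). Grouping the triangles this way partitions the edges of $H$ into $H_x$ and $H_y$; the vertices of $H_x$ are the $x$--$x$ edges appearing as sides, those of $H_y$ the $y$--$y$ edges. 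Since $X$ has rank \sq, the link at the vertex of $X$ is the Moebius--Kantor graph, so $|L_x| = |L_y| = 24$ and the bounds $2e_x + e_y \le 24$, $2e_y + e_x \le 24$ from Lemma \ref{L - nerve 6 12} give $e \le 16$; the involution of the $2$-cover forces $e_x = e_y$ and $e$ a multiple of $3$, so $e \in \{6,12\}$. It therefore suffices to verify from the explicit face list that $C$ has exactly $6$ edges (equivalently $e_x = e_y = 3$), and then, by the ST lemma (Lemma \ref{L - minimal linking graph}) together with Proposition \ref{P - not type T}, $H$ must be $S$ rather than $T$ — ruling out $T$ is automatic once we know $C$ comes from a $2$-cover. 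For completeness one should exhibit the explicit $4$-cycle-with-two-doubled-opposite-edges structure by tracing the six $xy$-edges of $X'$ and how they meet the $xx$- and $yy$-edges.

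\textbf{Main obstacle.} The proof is entirely a finite bookkeeping exercise with the two length-$8$ lists of triples, so there is no conceptual difficulty; the real work — and the only place an error can creep in — is correctly decoding the covering relation $X' \surj X$ encoded in the labelling convention ``the new edges are labeled $1x$ for every edge $x$ in $X$''. One must track which triangles of $X'$ are crossing faces separating $x$ from $y$ versus which lie inside a model geometry $M_x$ or $M_y$, and this requires reading the deck transformation correctly off the label pairs $(x, 1x)$ and seeing how it acts on the faces. Once that identification is pinned down, counting the $xy$-edges (obtaining $6$), splitting them into $H_x$ and $H_y$ (obtaining a path of length $3$ on each side, by Lemma \ref{L-minimal collar spans}), and invoking the ST lemma plus Proposition \ref{P - not type T} to conclude ``type $S$'' is routine. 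I would present the computation as a short table listing, for each of the sixteen triangles, its three vertices among $\{x,y\}$ and, for the crossing triangles, which side is the $xy$-edge — from which the nerve $S$ with its span decomposition can simply be read off.
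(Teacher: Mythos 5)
Your overall strategy is sound and is essentially the computation the paper intends: the lemma is stated as the outcome of a direct check on the explicit face list, and since $X'$ is a connected $2$-cover of a one-vertex complex of rank \sq, Lemma \ref{L - nerve 6 12}, the ST lemma (Lemma \ref{L - minimal linking graph}) and Proposition \ref{P - not type T} legitimately reduce the verification to showing that the separating collar has exactly $6$ nerve edges, i.e.\ exactly $6$ crossing faces (connectedness is then automatic, since each component of a nerve is itself a nerve with at least $6$ edges), after which only type $S$ survives.

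The gap is in the combinatorial dictionary you set up, which is the entire content of the computation and which, followed literally, produces the wrong graph. The vertices of the nerve $H$ are the midpoints of the $x$--$y$ edges of $X'$ (the edges with inequivalent extremities), and the edges of $H$ are the inscribed segments, one per crossing triangle; you state the opposite (``these middle segments are the edges of $H$'', ``the vertices of $H_x$ are the $x$--$x$ edges''). The $x$--$x$ and $y$--$y$ edges are not even contained in the collar, and with your dictionary the edge count would come out as $4$ (the $x$--$y$ edges of $X'$ are $1,11,2,12$) rather than the $6$ your argument needs. Relatedly, a crossing triangle does not have ``two sides at $x$ and one $xy$-side'': it has one $xx$- (or $yy$-) side and two $xy$-sides, and its nerve edge is the corner at the \emph{simple} vertex, where the two $xy$-sides meet; so a triangle with doubled vertex $x$ contributes an edge to $H_y\subset\Lk(y)$, the reverse of your rule (this swap is harmless for the isomorphism type of $H$, since it only relabels the two halves of the span decomposition, but the vertex/edge confusion is not). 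With the corrected dictionary the check is indeed routine: the crossing faces are the six triangles $[1,11,3]$, $[2,12,4]$, $[1,15,12]$, $[11,1,13]$, $[12,2,14]$, $[11,5,2]$, the nerve has vertices $1,11,2,12$ with $\{1,11\}$ and $\{2,12\}$ doubled and $\{1,12\}$, $\{11,2\}$ simple, which is exactly the square with two opposite double edges.
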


In order to construct a ``fake'' double cover, we have to substitute to $C$ a collar $D$ of type $T$.

The collar closure of $C$ in $X'$ is the 2-complex given by:
\[
[[1,11,3],[2,12,4],[1,15,12],[11,1,13],[12,2,14],[11,5,2]].
\]
We will consider instead the following collar
\[
[[1,-2,3],[-11,12,4],[1,15,12],[11,1,13],[12,2,14],[11,5,2]].
\]
where $-x$ indicate that the edge $x$ is opposite in the triangle. Note that, by definition, this construction flips the appropriate edges in the nerve of the collar $C$, which will therefore become a collar of type $T$. 

In other words, we define a new complex $X''$ as follows:
\[
X'' := [[1,-2,3],[-11,12,4],[1,15,12],[3,6,4],[3,7,6],[4,6,8],[5,7,8],[5,8,7],
\]
\[
[11,1,13],[12,2,14],[11,5,2],[13,16,14],[13,17,16],[14,16,18],[15,17,18],[15,18,17]]
\]  

By construction we have:

\begin{lemma}
The separating collar $D$ in $X''$ is of type $T$.
\end{lemma}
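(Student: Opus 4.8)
The statement to be proved is that the separating collar $D$ in the explicitly constructed complex
\[
X'' := [[1,-2,3],[-11,12,4],[1,15,12],[3,6,4],[3,7,6],[4,6,8],[5,7,8],[5,8,7],[11,1,13],[12,2,14],[11,5,2],[13,16,14],[13,17,16],[14,16,18],[15,17,18],[15,18,17]]
\]
is of type $T$. The plan is to carry out a direct combinatorial verification, tracking how the edge-flips in the first two triangles alter the nerve of the separating collar, and then identifying the resulting graph with the tetrahedron $T$ using the classification already available from the ST lemma (Lemma \ref{L - minimal linking graph}).

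First I would recall that $X''$ has two vertices (the vertex set is partitioned into the ``unbarred'' vertices $1,\dots,8$ and the ``barred'' vertices $11,\dots,18$, exactly as for $X'$), so by Lemma \ref{L - separating collar} there is a well-defined separating collar $D$ between them, namely the union of the open triangles and open edges whose vertex-set closure is the pair of vertices. These are precisely the six triangles $[1,-2,3],[-11,12,4],[1,15,12],[11,1,13],[12,2,14],[11,5,2]$, since $X''$ agrees with $X'$ outside this block and the separating collar of $X'$ was already determined (in the preceding lemma) to consist of the corresponding six triangles. Next I would compute the nerve: each of these six triangles is a crossing face whose inscribed polygon (in the sense of \ts\ref{S - model geometry}) is, by mild transitivity, a segment, and these six segments assemble into a graph $H''$ on $4$ vertices (the two span vertices $x,y$ together with two ``centerpiece'' vertices) with $6$ edges, exactly as in the type-$ST$ situation. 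The key point is the effect of the notation ``$-x$'': writing the edge $x$ in opposite orientation inside a triangle permutes which pair of the three triangle-edges meet which span vertex, and hence permutes which ends of the six inscribed segments get glued. I would compare the two gluing patterns side by side — the one coming from $[[1,11,3],[2,12,4],[1,15,12],[11,1,13],[12,2,14],[11,5,2]]$ (which yields $S$, by the lemma just preceding in \ts\ref{S-fake cover}) and the one coming from $[[1,-2,3],[-11,12,4],\dots]$ — and check that the two edge-flips convert the ``square with two opposite double edges'' $S$ into the complete graph on $4$ vertices $T$, with the prescribed span decomposition into two paths of length $3$ (cf.\ Lemma \ref{L-minimal collar spans}).

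Rather than verifying the tetrahedral structure by brute force, I would invoke the constraints already proved: by Lemma \ref{L - minimal linking graph}, a minimal thick collar in a complex of this type has nerve $S$ or $T$, and since $X''$ is of type $\frac 7 4$ (the link condition and shape set are unchanged by the flip, as the flip only re-routes edges within a fixed set of equilateral triangles), the nerve of $D$ is one of these two. So it remains to rule out $S$: by construction the flip acts on the nerve of $C$ by reversing exactly the two edges dictated by the ``$-$'' signs in $[1,-2,3]$ and $[-11,12,4]$, and one checks this reversal does not preserve the set of double edges of $S$ — equivalently, the resulting cubic graph on $4$ vertices is simple, hence $T$. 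The main obstacle is purely bookkeeping: one must be careful about the convention relating edge labels, edge orientations, and the induced vertex-gluing of the inscribed segments, and one must confirm that both flipped edges lie in the separating collar and that flipping them does indeed land on the tetrahedron rather than, say, producing a non-thick or disconnected nerve. Once the gluing diagram is drawn explicitly this is immediate, and no curvature or metric input beyond what is already recorded for type $\frac 7 4$ is needed.
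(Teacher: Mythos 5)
Your fallback plan (identify the six crossing triangles, compute how the two sign-flips re-route the gluing of the six inscribed segments, and read off the nerve) is exactly what the paper means by ``by construction'', and that part is fine. But the shortcut you actually propose in place of the bookkeeping contains a genuine error: you assert that $X''$ is of type $\frac 7 4$ because ``the link condition and shape set are unchanged by the flip''. This is false, and the paper itself says so two lemmas later: the flip exchanges the extremities of two edges in each link, and the links of $X''$ are the \emph{fake} Moebius--Kantor graph, which has girth $5$. Hence $X''$ is not nonpositively curved (sections of positive curvature appear) and is not of type $\frac 7 4$ --- that is precisely the point of the ``fake rank $\frac 7 4$'' construction. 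Consequently your appeal to Lemma \ref{L - minimal linking graph} to constrain the nerve of $D$ to $\{S,T\}$ is unjustified as stated; at best one could invoke the systolic variant in Remark \ref{R -systolic} (girth $\geq 4$ suffices, and the fake graph has girth $5$), but you would still need to know that $D$ is thick and that its nerve is minimal, neither of which you address.

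The second problem is that the shortcut does not actually save the work it is meant to avoid. To ``rule out $S$'' you must determine whether the re-glued cubic graph on $4$ vertices has double edges, and that determination is exactly the edge-by-edge tracking of which midpoints of crossing edges the two flipped segments now join --- i.e.\ the direct computation of the nerve, which immediately exhibits $K_4$ and makes the detour through the ST classification superfluous. (Once you know the nerve is a connected cubic multigraph on $4$ vertices, ``simple $\Rightarrow K_4=T$'' is elementary and needs no lemma.) So the proof should be restructured: drop the type-$\frac 7 4$ claim, and either carry out the explicit nerve computation for the six triangles of $X''$ (the paper's route), or, if you want a structural argument, verify thickness and invoke the systolic form of the ST lemma --- but in either case the decisive step is the same explicit check you deferred.
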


The flip in the nerve of $X'$ corresponds to a transformation of its links which exchanges the extremities of two edges. This transformation can easily be computed explicitly:

\begin{lemma}
The links in $X''$ are isomorphic to:
\begin{figure}[h]
\centering
\begin{tikzpicture}[shift ={(0.0,0.0)},scale = 2.0]
\tikzstyle{every node}=[font=\small]

\coordinate (v1) at (1.0,0.0);
\node [right] at (v1) {$\ \ \ \simeq\ \ \ $};
\coordinate (v2) at (0.92,0.38);
\coordinate (v3) at (0.71,0.71);
\coordinate (v4) at (0.38,0.92);
\coordinate (v5) at (-0.0,1.0);
\coordinate (v6) at (-0.38,0.92);
\coordinate (v7) at (-0.71,0.71);
\coordinate (v8) at (-0.92,0.38);
\coordinate (v9) at (-1.0,-0.0);
\coordinate (v10) at (-0.92,-0.38);
\coordinate (v11) at (-0.71,-0.71);
\coordinate (v12) at (-0.38,-0.92);
\coordinate (v13) at (0.0,-1.0);
\coordinate (v14) at (0.38,-0.92);
\coordinate (v15) at (0.71,-0.71);
\coordinate (v16) at (0.92,-0.38);
\draw[solid,thin,color=black,-] (v1) -- (v2);
\draw[solid,thin,color=black,-] (v2) -- (v3);
\draw[solid,thin,color=black,-] (v3) -- (v4);
\draw[solid,thin,color=black,-] (v4) to[bend left] (v6);
\draw[solid,thin,color=black,-] (v5) -- (v6);
\draw[solid,thin,color=black,-] (v5) to[bend left] (v7);
\draw[solid,thin,color=black,-] (v7) -- (v8);
\draw[solid,thin,color=black,-] (v8) -- (v9);
\draw[solid,thin,color=black,-] (v9) -- (v10);
\draw[solid,thin,color=black,-] (v10) -- (v11);
\draw[solid,thin,color=black,-] (v11) -- (v12);
\draw[solid,thin,color=black,-] (v12) -- (v13);
\draw[solid,thin,color=black,-] (v13) -- (v14);
\draw[solid,thin,color=black,-] (v14) -- (v15);
\draw[solid,thin,color=black,-] (v15) -- (v16);
\draw[solid,thin,color=black,-] (v16) -- (v1);
\draw[solid,thin,color=black,-] (v1) -- (v6);
\draw[solid,thin,color=black,-] (v3) -- (v8);
\draw[solid,thin,color=black,-] (v5) -- (v10);
\draw[solid,thin,color=black,-] (v7) -- (v12);
\draw[solid,thin,color=black,-] (v9) -- (v14);
\draw[solid,thin,color=black,-] (v11) -- (v16);
\draw[solid,thin,color=black,-] (v13) -- (v2);
\draw[solid,thin,color=black,-] (v15) -- (v4);
\shade[ball color=black] (v1) circle (.02);
\shade[ball color=black] (v2) circle (.02);
\shade[ball color=black] (v3) circle (.02);
\shade[ball color=black] (v4) circle (.02);
\shade[ball color=black] (v5) circle (.02);
\shade[ball color=black] (v6) circle (.02);
\shade[ball color=black] (v7) circle (.02);
\shade[ball color=black] (v8) circle (.02);
\shade[ball color=black] (v9) circle (.02);
\shade[ball color=black] (v10) circle (.02);
\shade[ball color=black] (v11) circle (.02);
\shade[ball color=black] (v12) circle (.02);
\shade[ball color=black] (v13) circle (.02);
\shade[ball color=black] (v14) circle (.02);
\shade[ball color=black] (v15) circle (.02);
\shade[ball color=black] (v16) circle (.02);
\end{tikzpicture}
\begin{tikzpicture}[shift ={(0.0,0.0)},scale = 2.0]
\tikzstyle{every node}=[font=\small]

\coordinate (v1) at (1.0,0.0);
\coordinate (v2) at (0.92,0.38);
\coordinate (v3) at (0.71,0.71);
\coordinate (v4) at (0.38,0.92);
\coordinate (v6) at (-0.0,1.0);
\coordinate (v5) at (-0.38,0.92);
\coordinate (v7) at (-0.71,0.71);
\coordinate (v8) at (-0.92,0.38);
\coordinate (v9) at (-1.0,-0.0);
\coordinate (v10) at (-0.92,-0.38);
\coordinate (v11) at (-0.71,-0.71);
\coordinate (v12) at (-0.38,-0.92);
\coordinate (v13) at (0.0,-1.0);
\coordinate (v14) at (0.38,-0.92);
\coordinate (v15) at (0.71,-0.71);
\coordinate (v16) at (0.92,-0.38);
\draw[solid,thin,color=black,-] (v1) -- (v2);
\draw[solid,thin,color=black,-] (v2) -- (v3);
\draw[solid,thin,color=black,-] (v3) -- (v4);
\draw[solid,thin,color=black,-] (v4) -- (v6);
\draw[solid,thin,color=black,-] (v5) -- (v6);
\draw[solid,thin,color=black,-] (v5) -- (v7);
\draw[solid,thin,color=black,-] (v7) -- (v8);
\draw[solid,thin,color=black,-] (v8) -- (v9);
\draw[solid,thin,color=black,-] (v9) -- (v10);
\draw[solid,thin,color=black,-] (v10) -- (v11);
\draw[solid,thin,color=black,-] (v11) -- (v12);
\draw[solid,thin,color=black,-] (v12) -- (v13);
\draw[solid,thin,color=black,-] (v13) -- (v14);
\draw[solid,thin,color=black,-] (v14) -- (v15);
\draw[solid,thin,color=black,-] (v15) -- (v16);
\draw[solid,thin,color=black,-] (v16) -- (v1);
\draw[solid,thin,color=black,-] (v1) -- (v6);
\draw[solid,thin,color=black,-] (v3) -- (v8);
\draw[solid,thin,color=black,-] (v5) -- (v10);
\draw[solid,thin,color=black,-] (v7) -- (v12);
\draw[solid,thin,color=black,-] (v9) -- (v14);
\draw[solid,thin,color=black,-] (v11) -- (v16);
\draw[solid,thin,color=black,-] (v13) -- (v2);
\draw[solid,thin,color=black,-] (v15) -- (v4);
\shade[ball color=black] (v1) circle (.02);
\shade[ball color=black] (v2) circle (.02);
\shade[ball color=black] (v3) circle (.02);
\shade[ball color=black] (v4) circle (.02);
\shade[ball color=black] (v5) circle (.02);
\shade[ball color=black] (v6) circle (.02);
\shade[ball color=black] (v7) circle (.02);
\shade[ball color=black] (v8) circle (.02);
\shade[ball color=black] (v9) circle (.02);
\shade[ball color=black] (v10) circle (.02);
\shade[ball color=black] (v11) circle (.02);
\shade[ball color=black] (v12) circle (.02);
\shade[ball color=black] (v13) circle (.02);
\shade[ball color=black] (v14) circle (.02);
\shade[ball color=black] (v15) circle (.02);
\shade[ball color=black] (v16) circle (.02);
\end{tikzpicture}

\caption{The fake Moebius--Kantor graph}
\end{figure}
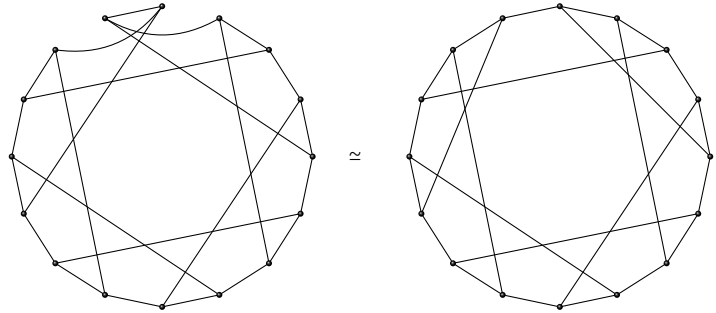
\end{lemma}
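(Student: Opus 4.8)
The statement is a concrete computation, and the plan is to reduce it to applying the edge--transposition described just above the lemma to the Moebius--Kantor graph.

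First I would exploit locality. Passing from $X'$ to $X''$ only touches the six triangles making up the collar closure of $C$, and among those only the two in which the loops $11$ and $2$ are interchanged (with a reversal of orientation), namely $[1,11,3]$ replaced by $[1,-2,3]$ and $[2,12,4]$ replaced by $[-11,12,4]$, are genuinely altered. Consequently every link of $X''$ at a vertex not incident to these triangles is literally unchanged, so it suffices to recompute the links at the two vertices $x,y$ forming the span of $C$; by construction these are both copies of the Moebius--Kantor graph, since $X'\to X$ is a covering of a complex of rank $\frac74$.

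Next I would make precise how the flip acts on a link. Reversing the orientation of an edge $e$ inside a triangle $f$ incident to a vertex $p$ keeps the corner of $f$ at $p$ contributing a single edge of the link $L_p$, but swaps its two endpoints --- the germs at $p$ of the other two sides of $f$, distinguished by which side of $e$ they lie on. In $X''$ two triangles are flipped, and at each of $x$ and $y$ they involve exactly the pair of corners in question; combining the two local moves, the net effect on $L_x$, and likewise on $L_y$, is to transpose the extremities of two edges of the Moebius--Kantor graph --- precisely the transformation announced before the lemma. I would then identify those two edges explicitly from the incidence data of $X'$ (the link edges carried by $[1,11,3]$ and $[2,12,4]$ at $x$, and by $[11,1,13]$ and $[12,2,14]$ at $y$), perform the transposition in a fixed explicit model of the Moebius--Kantor graph, and verify by a vertex relabelling that the outcome is the graph drawn in the statement; the coordinates $v_1,\dots,v_{16}$ in the figure essentially supply this relabelling, so the check reduces to comparing the $24$ incidences. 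Repeating the computation at $y$ produces the second graph.

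The main obstacle is purely bookkeeping: at each of the two vertices one must keep track of which germ of which loop lies on which side of the flipped edge, so that the \emph{same} abstract transposition is applied at both $x$ and $y$ and no spurious loop or double edge is created. Once the orientation conventions at the six collar triangles are fixed, what remains is a finite mechanical verification with no conceptual content; in practice I would confirm it by a short direct computation of the two vertex links from the combinatorial presentation of $X''$.
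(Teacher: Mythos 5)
Your proposal is correct and matches the paper's treatment: the paper likewise observes that the substitution of the collar amounts to a transformation of the two vertex links exchanging the extremities of two edges, and then settles the lemma by an explicit direct computation of the links from the combinatorial presentation of $X''$, which is exactly the finite verification you describe.
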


Note that the fake Moebius--Kantor graph has girth 5, and therefore the space $X''$ admits  sections of positive curvature; 
at the same time, part of the flatness is turned into negative curvature. A simplicial 2-complex whose link are isomorphic to the fake Moebius--Kantor graph is said to be of fake rank \sq.  

The lack of symmetries of collars of type $T$ (compare the proof of Prop.\ \ref{P - not type T}) shows that: 

\begin{proposition}\label{P - X'' not double cover}
The complex $X''$ is not a double cover. 
 \end{proposition}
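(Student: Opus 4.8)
The plan is to argue by contradiction: suppose $X''$ were a double cover, say $X'' \surj Y$ for some 2-complex $Y$ of rank \sq\ with one vertex. Then $Y$ would have two vertices lifting to the two vertices $x,y$ of $X''$ (or one vertex, but the covering involution would still have to act), and the covering involution $\iota\colon X'' \to X''$ would have to exchange the two vertices of $X''$ and hence restrict to an involution of the separating collar $D$ that swaps $\del^- D$ and $\del^+ D$, i.e.\ an involution of the nerve of $D$ respecting the span decomposition but permuting its two components. By the previous lemma $D$ is of type $T$, and the span decomposition of a collar of type $T$ is the one recorded in Lemma \ref{L-minimal collar spans}. So what is needed is precisely the statement of Lemma \ref{L - order4}(2): there is no graph involution $T \to T$ that respects the span decomposition in the sense of exchanging $H_x$ and $H_y$. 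This immediately yields the contradiction.

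Concretely, I would first recall that if $X'' \to Y$ is a connected double cover then the deck group is $\ZI/2\ZI$, generated by a fixed-point-free involution $\iota$; since $Y$ has one vertex, $\iota$ must send the vertex $x$ of $X''$ to $y$ (they are the two preimages of the single vertex of $Y$). Next, $\iota$ is a simplicial automorphism of $X''$, so it carries the separating collar $D$ between $x$ and $y$ to the separating collar between $y$ and $x$, which is $D$ again (a collar and its dual have the same collar closure), hence $\iota$ restricts to an automorphism of $\overline D$. Passing to nerves, $\iota$ induces a graph automorphism of the nerve $H$ of $D$ which must take the span subgraph $H_x$ to $H_y$ and vice versa, because $\iota$ swaps the span vertices $x$ and $y$. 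Finally, invoke the lemma above that $D$ has type $T$, together with Lemma \ref{L - order4}(2), to conclude that no such automorphism exists — contradiction.

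The one point that requires a little care, and which I expect to be the main (minor) obstacle, is justifying that the covering involution genuinely \emph{restricts} to the collar, i.e.\ that the separating collar is preserved setwise by any vertex-swapping automorphism of $X''$. This follows from the explicit description of the separating collar in Lemma \ref{L - separating collar} (the union of all open edges and open triangles whose vertex-set closure is $\{x,y\}$): this union is manifestly invariant under any automorphism fixing the pair $\{x,y\}$, since such an automorphism permutes the triangles with vertex set $\{x,y\}$ among themselves. Once this is in hand, the argument is exactly the symmetry obstruction already used in the proof of Proposition \ref{P - not type T}, now applied to $T$ rather than to the six-edge case, and the conclusion is immediate. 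I would phrase the proof in three or four sentences, ending with the pointer "compare the proof of Proposition \ref{P - not type T}".
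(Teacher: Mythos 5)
Your argument is correct and coincides with the paper's: the paper deduces the proposition from exactly this symmetry obstruction, namely that a deck involution of a double cover would have to exchange the two vertices, hence induce an involution of the nerve $T$ of the separating collar exchanging the two parts of its span decomposition, which is impossible by Lemma~\ref{L - order4}(2) (compare the proof of Proposition~\ref{P - not type T}). The only cosmetic caveat is your appeal to Lemma~\ref{L-minimal collar spans}, whose nonpositive-curvature and convexity hypotheses $X''$ does not satisfy (it is only of \emph{fake} rank $\frac 7 4$); this is harmless, since every involution of $T=K_4$ fixes an edge and therefore cannot interchange the two parts of any edge partition, which is all the argument needs.
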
 
 
In general, a 2-complex with two vertices  fails to be a double cover for various reasons, which can be detected in balls of sufficiently large radius around the vertices;  the most obvious reason is to have non isomorphic links at the vertices (see  \ts\ref{S - 158} for the existence of such complexes).

\section{Non filling group cobordisms of Moebius--Kantor type}

Model geometries, as defined in  \ts \ref{S - model geometry}, can be classified, and give rise to the simplest group cobordisms. In the present section we classify the nonfilling such cobordisms, for groups of Moebius--Kantor type. In turn, these cobordisms can be used to construct new groups of intermediate rank.
  
  The main result is that:

\begin{theorem} \label{T - cobordism 74}
There exist precisely two non filling group cobordisms of rank \sq\ with one vertex whose boundary collars are connected of type $ST$. Furthermore, the two cobordisms are self-dual, and their collars are pairwise isomorphic,  self-dual, and of type $S$.
\end{theorem}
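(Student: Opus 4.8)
## Proof plan

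The plan is to set up the problem as a finite enumeration constrained by the link condition for rank \sq, i.e.\ by the requirement that the link at the unique vertex of the cobordism $X$ be a subgraph of the Moebius--Kantor graph (the link of type \sq), together with the boundary data prescribed by a collar of type $ST$. First I would record the combinatorial shape of a type-$ST$ collar in a complex of rank \sq: by the ST lemma (Lemma \ref{L - minimal linking graph}) the nerve is $S$ or $T$, both cubic on $4$ vertices with $6$ edges, and by Lemma \ref{L-minimal collar spans} (which applies since equilateral triangles give a $\frac\pi2$-convex, indeed $\frac{2\pi}3$-convex, metric) the span decomposition $H=H_x\cup H_y$ splits the nerve into two paths of length $3$. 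Since the cobordism has a single vertex $x=y$, both $H_x$ and $H_y$ embed into the \emph{same} link $L_x\subset \mathrm{MK}$ (the Moebius--Kantor graph), and they are edge-disjoint. So the first concrete step is: classify the ways two edge-disjoint paths of length $3$ can sit inside $\mathrm{MK}$ and reassemble, through the collar, into a graph $H\cong S$ or $H\cong T$, compatibly with the boundary identifications $\del^-C$, $\del^+C$ being the two quotients $H/H_x$, $H/H_y$.

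Next I would reconstruct the cobordism from this local data. A non-filling cobordism of rank \sq\ with one vertex and type-$ST$ boundary collars $C$ (domain) and $D$ (range) is, by the decomposition of \ts\ref{S - model geometry}, determined by the model geometry $M_x$ at the vertex together with how the two collars attach to $\del M_x$; since there is a single vertex, $M_x$ is the ``centrepiece'' and $X$ is obtained from $M_x$ by gluing $\overline C$ and $\overline D$ along $\del^-C\cup\del^+D=\del X$. The link $L_x=\mathrm{MK}$ is partitioned into the span graphs $H_x^C, H_x^D$ of the two collars plus the part of the link internal to $M_x$; counting edges ($\mathrm{MK}$ has $12$ edges, each collar contributes span graphs with $3+3=6$ edges) forces $H_x^C$ and $H_x^D$ together to use all $12$ edges, so $M_x$ has \emph{no} interior faces at the vertex beyond those glued to the collars — the cobordism is ``thin''. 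This rigidity is what makes the enumeration finite and short: $\mathrm{MK}$ must be written as an edge-disjoint union of four paths of length $3$, two for $C$ and two for $D$, and one checks which such partitions are realizable as honest $2$-complexes of rank \sq\ (this is where the precise combinatorics of $\mathrm{MK}$ from \cite[\ts4]{rd} enters). I expect exactly two partitions to survive, giving the two cobordisms.

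For the remaining assertions: self-duality of each cobordism should follow from an explicit symmetry of $\mathrm{MK}$ (the Moebius--Kantor graph is vertex- and edge-transitive with a rich automorphism group) that swaps the $C$-span pair with the $D$-span pair and simultaneously reverses the $[0,1]$ direction; concretely, one exhibits the graph involution of $\mathrm{MK}$ interchanging $H_x^C\leftrightarrow H_x^D$ and checks it extends to the $2$-complex. That the boundary collars are of type $S$ rather than $T$ is forced by the symmetry argument already used in Proposition \ref{P - not type T} / Lemma \ref{L - order4}: a collar that is a boundary of a symmetric (self-dual) cobordism must admit the involution $\theta$ exchanging the two span paths, and by Lemma \ref{L - order4}(2) the tetrahedral nerve $T$ admits \emph{no} such involution, whereas $S$ does — so the boundary collars have nerve $S$. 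Pairwise isomorphism of the four boundary collars (two per cobordism, dual to each other, across two cobordisms) then comes from the self-duality together with the fact that in $\mathrm{MK}$ the realizable span-$3$-path pairs forming an $S$-collar are unique up to automorphism.

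The main obstacle I anticipate is the enumeration step: verifying \emph{completely} that there are exactly two ways — no more, no fewer — to decompose the edge set of the Moebius--Kantor graph into four length-$3$ paths that (a) pair up into two valid collars of type $S$ or $T$ and (b) actually close up to a genuine simplicial $2$-complex of strict rank \sq\ (so that every re-glued face is a triangle and no new vertices or non-\sq\ links are created). This is a finite but genuinely fiddly case analysis in the specific graph $\mathrm{MK}$, and ruling out the spurious combinatorial solutions — those that pass the link/edge-count test but fail to assemble into an actual cobordism, or that produce a filling rather than a non-filling one — is where the real work lies; it presumably relies on the explicit list of rank-\sq\ complexes from \cite[\ts4]{rd} and on checking the weight equation of \ts\ref{S - model geometry}. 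Everything else (self-duality, type $S$, isomorphism of collars) is then a short symmetry argument on top of the classification.
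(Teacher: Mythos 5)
Your enumeration scheme rests on two factual errors that together invalidate the ``thin cobordism'' rigidity you build the proof on. First, the Moebius--Kantor graph has $16$ vertices and $24$ edges (this is stated explicitly in \ts\ref{S - 158}), not $12$. Second, in a cobordism $X\colon C\to D$ with a single interior vertex, each boundary collar contributes only \emph{one} half of its span decomposition to the link $L$ of that vertex: the other path of length $3$ is attached along the free boundary ($\del^-C$, resp.\ $\del^+D$), whose vertices lie in $\del X$. So the two collars contribute two disjoint paths of length $3$ (the roots $\alpha$ and $\beta$ of the paper's proof), i.e.\ $6$ of the $24$ link edges --- not $12$ of $12$. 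Consequently the cobordism is not ``thin'': the remaining $18$ link edges are carried by genuine core faces, and the heart of the argument is precisely to pin down this core (four loops $\gamma_1,\dots,\gamma_4$ and their faces, cf.\ Lemmas \ref{L - loops descriptions} and \ref{L - gamma determined} and Figures \ref{F - Cobordism 32}, \ref{F - Cobordism 2}). Your proposed reduction --- partition the edge set of the Moebius--Kantor graph into four length-$3$ paths and check realizability --- is an impossible constraint (it accounts for only $12$ of $24$ edges) and in any case not the right reduction; the paper instead classifies the pair $(\alpha,\beta)$ up to $\Aut(L)$ via the rank of roots ($\frac 32$ or $2$, two orbits), shows $\beta$ is forced by $\alpha$ using the disjointness of $1$-neighborhoods (which follows from $\del X=\del^-C\sqcup\del^+D$), and then shows the loops and core faces are forced by the local geometry of collars of type $S$ and $T$ (Figure \ref{F - local geometry collar}).

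The symmetry argument you invoke for ``the boundary collars are of type $S$'' is also misplaced. Proposition \ref{P - not type T} rules out type $T$ for the separating collar of a \emph{double cover}, where a deck involution exchanging the two halves $H_x,H_y$ of that collar's span decomposition exists a priori; in the cobordism setting no such involution is given, and the self-duality of the cobordism (which exchanges the two collars $C$ and $D$, i.e.\ the roots $\alpha$ and $\beta$, not the two span halves of a single collar) is itself a conclusion of the theorem, established only after the classification by exhibiting an explicit automorphism (a product of eight transpositions). Deducing ``type $S$'' from self-duality plus Lemma \ref{L - order4} is therefore circular and, even granting self-duality, does not apply to the right involution. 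In the paper, type $S$ is read off directly from the determined faces: the faces $a$ and $c$ (resp.\ $a'$ and $c'$) are adjacent along the root, which identifies the nerve as $S$.
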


Observe that such cobordisms do not exist, for example, for groups (of rank 2) of type $\tilde A_2$.

\begin{proof}[Proof of Theorem \ref{T - cobordism 74}] 
Let $X\colon C\to D$ be such a group cobordism, and $L$ be the link of the unique vertex in $X$ not in the boundary. By assumption, $L$ is isomorphic to the Moebius--Kantor graph (link of rank \sq).

By definition of the collar of type $S$ (see Lemma \ref{L - minimal linking graph}), we have two disjoint embeddings of the path of length 3 in $L$, given by the span decomposition. Let $\alpha$ and $\beta$ denote the two copies of these two paths. 

Observe that $\alpha$ and $\beta$ are roots in  $L$ in the sense of \cite{chambers}. Since $L$ is the graph of rank \sq, these roots have rational rank
 \[
  \rk(\alpha),\rk(\beta)\in \left\{\frac 3 2, 2\right\}.
\]

\begin{lemma}\label{L - aut two orbits roots} The automorphism group of $L$  has exactly two orbits of roots, which are the set of roots of rank 2 and the set of roots of rank $\frac 3 2$. 
\end{lemma}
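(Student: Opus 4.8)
The plan is to exploit the exceptional symmetry of the link $L$ --- the Moebius--Kantor graph --- together with the combinatorial definition of the rank of a root. Recall the relevant facts about $L$: it is the unique cubic symmetric graph on $16$ vertices, it is bipartite of girth $6$, and $\Aut(L)$ has order $96$ and is \emph{$2$-arc-regular}, i.e.\ it acts simply transitively on the $16\cdot 3\cdot 2 = 96$ directed paths of length $2$ in $L$. (All of this may be quoted, or checked by hand from the description of $L$ as the generalized Petersen graph $GP(8,3)$.) A root is an unoriented path of length $3$; since the girth is $6$ it is automatically embedded, and counting as above there are $16\cdot 3\cdot 2\cdot 2 = 192$ oriented paths of length $3$, hence $96$ roots.

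First I would determine the $\Aut(L)$-orbits on \emph{oriented} paths of length $3$. The stabiliser of an oriented $3$-path is contained in the stabiliser of its initial oriented $2$-subpath, which is trivial by $2$-arc-regularity; hence every orbit of oriented $3$-paths has size $96$, so there are exactly two of them, say $O_1$ and $O_2$. The reversal involution $\rho$ on oriented $3$-paths commutes with the $\Aut(L)$-action, so it permutes $\{O_1,O_2\}$. Therefore the $96$ roots form either a single $\Aut(L)$-orbit (if $\rho$ interchanges $O_1$ and $O_2$) or exactly two orbits of $48$ (if $\rho$ preserves each).

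Second, I would use the rank to exclude the first alternative and to label the orbits. By its definition in \cite{chambers}, the rational rank $\rk(\alpha)$ of a root $\alpha$ depends only on the isomorphism type of the pair $(L,\alpha)$: it is $\Aut(L)$-invariant, insensitive to an orientation of $\alpha$, and for a root of the Moebius--Kantor graph it equals $2$ or $\frac 32$ according to a local count (the number of hexagons of $L$ through $\alpha$). Computing $\rk$ on one representative of $O_1$ and one of $O_2$ exhibits a root of rank $2$ and a root of rank $\frac 32$; as $\rk$ is orientation-insensitive it is non-constant on the set of roots, so $\rho$ must preserve each $O_i$. Hence there are exactly two orbits of roots, and they are precisely the roots of rank $2$ and the roots of rank $\frac 32$.

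The only genuine computation is this last one: one must recall the precise definition of $\rk$ from \cite{chambers} and carry out the finite --- but slightly fiddly --- verification inside the $16$-vertex graph $L$ that both values $2$ and $\frac 32$ occur (equivalently, that reversal does not fuse $O_1$ with $O_2$); the rest is bookkeeping with $\Aut(L)$. As a consistency check, running the same argument for type $\tilde A_2$, where the link is the incidence graph of a projective plane and its automorphism group is transitive on paths of length $3$, yields a single orbit of roots and no rank distinction --- which is what underlies the remark, after Theorem \ref{T - cobordism 74}, that the analogous cobordisms do not exist in rank $2$.
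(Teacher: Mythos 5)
Your argument is correct and follows essentially the same route as the paper: high transitivity of $\Aut(L)$ on oriented (pointed) paths of length $2$ bounds the number of orbits of roots by two, and the $\Aut(L)$-invariance of the rank, together with the fact that both values $2$ and $\frac 3 2$ occur, forces exactly two orbits labelled by rank. The paper obtains the bound directly from transitivity on flags $A\subset\gamma$ (a $2$-path with a chosen extremity), so your $2$-arc-regularity count and the orientation-reversal discussion are extra bookkeeping rather than a genuinely different idea.
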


\begin{proof}
The argument can be extracted from the proof of Proposition 41 in \cite{rd}. Namely, the automorphism group $G$ is transitive on the flags $A\subset \gamma$ where $\gamma$ is a simplicial path of length 2 and $A$ is an  extremity of $\gamma$, from which it follows that there are at most 2 orbits of roots. It is clear on the other hand that roots of rank 2 and roots of rank $\frac 3 2$ belong to distinct $G$-orbits.  
\end{proof}

\begin{lemma} 
The open 1-neighbourhoods of $\alpha$ and $\beta$ are disjoint: $N_1(\alpha)\cap N_1(\beta)=\emptyset$.
\end{lemma}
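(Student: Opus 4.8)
The plan is to argue by contradiction, exploiting the structure of the Moebius--Kantor graph $L$ and the two paths $\alpha,\beta$ of length $3$ furnished by the span decomposition of a collar of type $S$. Recall that in a collar of type $S$, in the span decomposition $H=H_x\cup H_y$ with $\spn(C)=\{x\}$ here (one vertex), both $H_x$ and $H_y$ embed as subgraphs of $L$; moreover the collar being acylindrical means $\del^-C$ and $\del^+C$ share no edge, and being vertex-free and open constrains how $\alpha$ and $\beta$ sit relative to the triangles around the vertex. The key geometric input is that $\alpha$ and $\beta$ are the two ``boundary copies'' of the nerve path, so the triangles of the collar that touch $x$ from the $\alpha$-side are disjoint (as open faces) from those touching from the $\beta$-side; an edge of $L$ in $N_1(\alpha)\cap N_1(\beta)$ would produce a triangle adjacent to both, i.e.\ a common face, contradicting acylindricity (or boundary injectivity, whichever is assumed for these cobordisms).

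Concretely, first I would recall that a root $\gamma$ of rank $\tfrac32$ or $2$ in $L$ occupies $4$ vertices and $3$ edges, and that its open $1$-neighborhood $N_1(\gamma)$ consists of $\gamma$ together with all edges of $L$ incident to a vertex of $\gamma$; since $L$ is cubic, each of the $4$ vertices of $\gamma$ contributes at most one further edge beyond the two of $\gamma$ meeting it (and the two endpoints of $\gamma$ contribute two further edges each), so $N_1(\gamma)$ has a bounded, explicitly computable number of vertices and edges. Second, I would translate ``$N_1(\alpha)\cap N_1(\beta)\neq\emptyset$'' into a statement about the collar $C$: a common edge $e\in L$ would lie in a triangle $f$ of $X$ incident to $x$; tracking which span-graph $H_x$ vs.\ $H_y$ that triangle belongs to, one sees $e$ (or a vertex of $e$) must be shared between the closure of the triangles counted by $H_x$ and those counted by $H_y$, forcing an edge of $\del^-C$ to coincide with an edge of $\del^+C$, or an edge of a boundary to lie in two faces of $C$. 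This contradicts acylindricity and boundary injectivity of $C$ as built into the hypothesis of Theorem~\ref{T - cobordism 74}. Third, I would also handle the ``intersection is empty at the level of vertices'' subtlety: even if $N_1(\alpha)$ and $N_1(\beta)$ shared only a vertex and no edge, that vertex would be a span vertex common to both sides, again contradicting $|\spn(C)|=1$ together with the vertex-free hypothesis, or else forcing the two paths $\alpha,\beta$ to be non-disjoint, contrary to the conclusion of Lemma~\ref{L-minimal collar spans}.

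The main obstacle I expect is bookkeeping: making precise the dictionary between ``edges in $N_1(\alpha)\cap N_1(\beta)$ inside $L$'' and ``faces/edges of the collar closure $\overline C$'', because the germ embedding $M_x\inj X$ is not literally a simplicial map and one must pass through the quotient (subdivided) structure. The clean way to do this is to use the span decomposition map $H^1\surj\spn(C)$ and the fact that for $x$ the unique span vertex $H\cong H_x$, so $\alpha$ and $\beta$ are literally two disjoint subgraphs of $L$ coming from the two boundary components; then ``$N_1(\alpha)$ meets $N_1(\beta)$'' exactly says a triangle of $X$ at $x$ has one side on $\alpha$'s neighborhood and another on $\beta$'s, i.e.\ it is counted toward both $\del^-C$ and $\del^+C$, which is forbidden. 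Once this correspondence is set up, the contradiction is immediate and the proof is short; I would keep the argument at the level of ``such a triangle would be adjacent to both boundary components, contradicting acylindricity/boundary injectivity'' rather than enumerating cases in $L$.
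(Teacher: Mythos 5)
Your proposal misreads the setup and then lands on a contradiction that does not follow. In the proof of Theorem \ref{T - cobordism 74}, $\alpha$ and $\beta$ are not the two sides of a single collar with $\spn(C)=\{x\}$: they are the two roots (paths of length $3$) cut out in the link $L$ of the unique interior vertex by the two \emph{different} boundary collars $C$ (domain) and $D$ (range), each collar of type $S$$T$ contributing the half of its span decomposition that lives in $L$ (the other half lives in the link of the second span vertex). In particular the premise ``$\spn(C)=\{x\}$ here (one vertex)'' is contrary to the paper's own setup, since collars in $\Col(X)$ are nonconical and the paper shows $|\spn(C)|\neq 1$ for such collars; your fallback appeal to ``$|\spn(C)|=1$ together with the vertex-free hypothesis'' in the vertex-sharing case therefore has no basis.

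More importantly, the face of $X$ corresponding to a link edge joining $\alpha$ to $\beta$ is in general a face of \emph{neither} $C$ nor $D$ (its corner at the interior vertex is an edge of $L$ lying in neither root), so nothing forces ``a triangle counted toward both $\del^-C$ and $\del^+C$'': acylindricity only concerns the two boundaries of one and the same collar, and boundary injectivity (Definition \ref{D - boundary-injective}) is not among the hypotheses of Theorem \ref{T - cobordism 74} at all. The missing idea is the one the paper uses: one endpoint of the offending link edge is an edge of $X$ crossed by $C$, the other an edge crossed by $D$, so the corresponding triangle has three \emph{distinct} vertices --- the interior vertex, a vertex of $\del^-C$, and a vertex of $\del^+D$ --- and its side opposite the interior vertex is then a boundary edge of $X$ connecting the two collar boundaries, which is impossible because the defining partition of a group cobordism is $\del X=\del^-C\sqcup\del^+D$. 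Without that step (distinct vertices, hence a boundary edge joining $\del^-C$ to $\del^+D$), your reduction to acylindricity/boundary injectivity does not yield a contradiction, so the proof as proposed has a genuine gap.
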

\begin{proof} If not, an  edge connecting $\alpha$ and $\beta$ would correspond to a triangle whose vertices are distinct embedded vertices of $X$, i.e., form three distinct vertices in a complex of rank \sq\ in which $X$ embeds. Therefore such an edge corresponds to an edge in  the boundary of $X$ that connects the collars $C$ and $D$, which contradicts the fact that $\del X= \del^-C\sqcup \del ^+ D$.     
\end{proof}

\begin{lemma}\label{L - loops descriptions} The loops in $X$  (viewed as a model geometry, as defined in \ts \ref{S - model geometry}) can be of three types:
\begin{itemize}
\item disjoint from  $N_1(\alpha \cup \beta)$
\item connecting two vertices in $N_1(\alpha)$
\item connecting two vertices in $N_1(\beta)$
\end{itemize}  
\end{lemma}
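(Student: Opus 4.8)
The plan is to read off, from the link $L=\Lk X$ of the central vertex $x$, which of the two collars $C$, $D$ (if either) a given loop is ``attached to''. A loop of $X$, in the model-geometry sense, is an edge both of whose ends are $x$, so it determines an unordered pair $\{u,v\}$ of vertices of $L$, its two germs at $x$. Since $L$ is the Moebius--Kantor graph, which is $3$-regular, such an edge lies on exactly three triangular faces, and each of these faces meets the loop in two corners --- one at the $u$-end and one at the $v$-end --- hence records one edge of $L$ at $u$ and one at $v$; together they exhaust the three link-edges at each of $u$ and $v$. Recall from Lemma~\ref{L-minimal collar spans} that $\alpha=H_x^C$ and $\beta=H_x^D$ are paths of length $3$ embedded in $L$, and that $N_1(\alpha)\cap N_1(\beta)=\emptyset$ by the preceding lemma; hence the three sets $N_1(\alpha)$, $N_1(\beta)$ and $L\setminus N_1(\alpha\cup\beta)$ are pairwise disjoint and the asserted trichotomy is well posed. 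What must be shown is that the germs $u,v$ of a loop always lie in a common one of these sets.

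First I would transport the position of one germ to the other along the fan of a collar. Suppose a germ of a loop $\ell$, say $u$, is a vertex of $\alpha$, and let $a$ be a neighbour of $u$ in the path $\alpha$. The face $f$ of $\ell$ carrying the link-edge from $u$ to $a$ has that link-edge inside $H_x^C$, so $f$ is a face of $C$; since $C$ is open and simplicially closed, $f$ lies with its closure in $\overline C$, so every vertex of $f$ belongs to $\spn(C)=\{x,y\}$. As $f$ contains the loop side $\ell$ (joining $x$ to $x$), its third vertex cannot be $x$ --- that would put a $3$-cycle in $L$, against girth $6$ --- so it is $y$, and $f=(x,x,y)$. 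Its remaining side $e'$, joining the other end of $\ell$ to $y$, again lies in $\overline C$, so the $x$-germ $w$ of $e'$ is a vertex of $\alpha$; and $w$ is joined to $v$ in $L$ through the corner of $f$ at the other end of $\ell$. Thus $v\in N_1(\alpha)$. Running the same argument with $D$ and $\beta$ shows that a germ of $\ell$ in $\beta$ forces the other germ into $N_1(\beta)$. Since $N_1(\alpha)\cap N_1(\beta)=\emptyset$, no loop can have one germ in $\alpha$ and the other in $\beta$ (nor, by symmetry, one germ in each of $N_1(\alpha)$, $N_1(\beta)$). Consequently a loop with a germ in $\alpha$ has both germs in $N_1(\alpha)$ and is of the second type, and a loop with a germ in $\beta$ is of the third type.

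It remains to handle a loop none of whose germs lies in $\alpha\cup\beta$, and to show that then neither germ lies in $N_1(\alpha\cup\beta)$, so that the loop is disjoint from $N_1(\alpha\cup\beta)$ and of the first type. For this I would reprise the argument from the proof of the disjointness lemma: an edge of $L$ joining a germ $u$ of $\ell$ to a vertex of $\alpha$ would be the link-edge at $x$ of a triangle of $X$ whose three vertices are three distinct vertices of a complex of rank \sq\ in which $X$ embeds, and the segment inscribed in that triangle would be an edge of $\del X$ running between the collar $C$ and the interior loop $\ell$ --- incompatible with $\del X=\del^-C\sqcup\del^+D$. The step I expect to be the main obstacle is exactly this propagation of ``being attached to a collar'' from one end of a loop to the other: it forces one to track the three faces of the loop simultaneously and to use, in combination, the openness and simplicial-closedness of the collars, the $3$-regularity and girth of the Moebius--Kantor graph, and the constraint that $X$ embeds in a rank-\sq\ complex --- essentially the same ingredients that drive the two preceding lemmas.
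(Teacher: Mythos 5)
The step that actually carries the lemma is the one your final paragraph gets backwards: a germ $u$ of a loop $\ell$ lying at distance one from $\alpha$, i.e.\ joined to a vertex $a\in\alpha$ by a link edge that is \emph{not} an edge of $\alpha$. You rule this configuration out by the ``three distinct vertices'' argument borrowed from the disjointness lemma, but that argument does not apply here: the face whose corner at $x$ is the link edge $(u,a)$ has as its two sides at that corner the loop $\ell$ (running from $x$ to $x$) and the collar-crossed edge $e_a$ (running from $x$ to a boundary vertex), so it has only \emph{two} distinct vertices, its inscribed polygon is degenerate, and no edge of $\del X$ joining $C$ to $\ell$ is produced. Such faces must exist: the collar $C$ of type $S$ or $T$ has six faces but $\alpha$ has only three edges, so three of its faces are attached at boundary vertices, and each of these has an edge from $x$ to $x$ --- a loop --- as its third side (these are the faces adjacent to the loops $\gamma_2,\gamma_3,\gamma_4$ in Figure \ref{F - Cobordism 32}). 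Consequently the conclusion you aim for in that paragraph --- a loop with no germ on $\alpha\cup\beta$ has no germ in $N_1(\alpha\cup\beta)$ --- is false; indeed \emph{no} loop has a germ on $\alpha\cup\beta$, since the vertices of $\alpha$ and $\beta$ are the germs at $x$ of the edges crossed by the nerves, which run from $x$ to $\del^-C$ or $\del^+D$ and are never loops, so your trichotomy would force all eight loop germs outside $N_1(\alpha\cup\beta)$, which is numerically impossible (in the rank $\frac 3 2$ case only the two extremities of the edge $(7,12)$ lie there).

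For the same reason, the case you treat in detail (a germ of $\ell$ equal to a vertex of $\alpha$) is vacuous, and inside it the face carrying an edge \emph{of} $\alpha$ has the two crossed edges, not $\ell$, as its sides at that corner. The paper's short proof handles the genuine case by openness rather than by disjointness: since the open edge $e_a$ lies in the open collar $C$, the face $f$ with corner $(u,a)$ is contained in $C$; being a face of $C$ having a loop as a side, it is attached at a boundary vertex, its other two sides are crossed edges, and its second corner at $x$ joins the other germ of $\ell$ to another vertex of $\alpha$. Hence both germs of $\ell$ lie in $N_1(\alpha)$, and combined with $N_1(\alpha)\cap N_1(\beta)=\emptyset$ this gives the trichotomy. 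Your general strategy --- reading off in $L$ which collar a loop is attached to and transporting this across a face to the other germ --- is the right one, but the transport must go through the openness of $C$, not through the ``three distinct vertices'' obstruction.
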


\begin{proof}
This follows from the fact that if a loop intersects $N_1(\alpha)$, then it is the boundary to an embedded triangle of $X$ having an angle in $N_1(\alpha)$. Such a triangle  belongs to $C$, and its boundary loop connects two vertices in $N_1(\alpha)$.  
\end{proof}

By Lemma \ref{L - aut two orbits roots} it is enough to consider the two cases
\begin{enumerate}[a)]
\item $\rk(\alpha)= \frac 3 2$, and,
\item $\rk(\alpha)= 2$.
\end{enumerate}  
We start with b). The following lemma is straightforward.

\begin{lemma}\label{L - rank 2 case}
If $\rk(\alpha)=2$, then  $L \setminus N_2(\alpha)$ is a segment of simplicial length 5.
\end{lemma}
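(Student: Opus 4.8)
The statement concerns the Moebius--Kantor graph $L$ (the link of rank $\tfrac74$), a root $\alpha$ which is a path of length $3$, and the claim that when $\rk(\alpha)=2$ the complement $L\setminus N_2(\alpha)$ is a simplicial path with $5$ edges. The plan is to argue entirely inside the fixed finite graph $L$, using its known structure: $L$ is the bipartite cubic graph on $16$ vertices of girth $6$ (the $(8,3)$ generalized Petersen / Moebius--Kantor graph), so $|L^0|=16$ and $|L^1|=24$. First I would pin down the combinatorial meaning of ``root of rank $2$'': by the discussion of roots in \cite{chambers} invoked just above, a root of rank $2$ is, roughly, a path of length $3$ that is ``straight'' (geodesic and not branching), i.e.\ one whose $2$-neighborhood in $L$ already fills out a flat-type configuration; concretely, $\alpha=(a_0,a_1,a_2,a_3)$ with $a_0,a_3$ at distance $3$ in $L$. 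I would fix this description and then simply enumerate.

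\textbf{Key steps.} (1) Write $\alpha = (a_0 - a_1 - a_2 - a_3)$ and record the vertices of $L$ by distance from $\alpha$. Since $L$ is cubic, $a_0$ has two neighbors off $\alpha$, $a_1$ has one, $a_2$ has one, $a_3$ has two; that is $6$ vertices in $N_1(\alpha)\setminus\alpha$, so $|N_1^0(\alpha)| = 4+6 = 10$. (2) From each of those six distance-$1$ vertices, count neighbors at distance $2$; using girth $6$ (no short cycles force coincidences within distance $2$) one gets the distance-$2$ shell, and I would check that $|N_2^0(\alpha)| = 10 + (\text{distance-2 vertices})$ leaves exactly $16 - |N_2^0(\alpha)|$ vertices at distance $\geq 3$. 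The rank-$2$ (``straight'') hypothesis is what makes this count come out so that exactly $6$ vertices remain outside $N_2(\alpha)$. (3) Identify the induced subgraph on those $6$ remaining vertices: again by the girth-$6$ and cubic constraints, together with the fact that $L$ is vertex-transitive and edge-transitive, the induced graph on $L^0\setminus N_2^0(\alpha)$ must be connected with $6$ vertices and, since it is disjoint from all of $N_2(\alpha)$, each of its vertices has at most a limited number of edges leaving it toward $N_2(\alpha)$; a direct check shows it is a path $P_5$ (5 edges, 6 vertices). A clean way to organize step (3): the antipodal-type symmetry of $L$ exchanging $\alpha$ with a ``parallel'' root shows $L\setminus N_2(\alpha)$ must itself look like (a neighborhood of) a root, and the only $\rk=2$ candidate of the right size is $P_5$.

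\textbf{Main obstacle.} The only real work is the bookkeeping of distances in $L$: making sure that no two of the nominally distinct vertices in the distance-$1$ and distance-$2$ shells actually coincide or get joined by an unexpected edge. This is exactly controlled by the girth being $6$ (any coincidence or extra edge would produce a cycle of length $\leq 5$), so the argument is safe, but it must be done carefully — ideally by drawing $L$ in its standard $8+8$ vertex layout, marking $N_2(\alpha)$, and reading off the complement. I would present it as: ``an explicit inspection of the Moebius--Kantor graph, using girth $6$, shows $|N_2^0(\alpha)| = 10$ and that the induced subgraph on the remaining $6$ vertices is a path of length $5$.'' The transitivity lemma (Lemma~\ref{L - aut two orbits roots}) guarantees this is independent of the chosen rank-$2$ root, so a single verification suffices.
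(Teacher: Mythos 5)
The paper itself records this lemma as ``straightforward'' and gives no written argument; the intended proof is exactly the kind of direct inspection of the Moebius--Kantor graph you describe, with Lemma~\ref{L - aut two orbits roots} reducing everything to one representative. So your overall strategy matches the paper. But there is a genuine gap in your execution: the combinatorial characterization you adopt for ``root of rank $2$'' is vacuous. Since $L$ has girth $6$, the endpoints of \emph{every} embedded path of length $3$ are automatically at distance $3$, so ``geodesic/straight, $d(a_0,a_3)=3$'' does not distinguish the two orbits of Lemma~\ref{L - aut two orbits roots} at all, and with that criterion your enumeration cannot even identify which orbit it is supposed to be inspecting. A correct discriminant is, for instance, the number of $6$-cycles of $L$ containing the root (two for a rank-$2$ root, one for a rank-$\frac32$ root --- this is what produces the average $\frac74$), or simply a concrete representative of the rank-$2$ orbit, e.g.\ the path (outer edge, outer edge, spoke) $u_0u_1u_2v_2$ in the standard model with outer cycle $u_0,\dots,u_7$, spokes $u_iv_i$ and inner edges $v_iv_{i+3}$.

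Several of your counting claims are also off in a way that matters. For a root $\alpha$ of \emph{either} rank one has $|N_1^0(\alpha)|=10$ and the remaining $6$ vertices all lie at distance exactly $2$; there are no vertices at distance $\geq 3$, so your step (2), phrased with the closed $2$-neighborhood and ``vertices at distance $\geq 3$'', is empty, and the lemma must be read with the open $2$-neighborhood. In particular the count of $6$ outside vertices is not ``what the rank-$2$ hypothesis buys'': it holds in both cases. What the rank governs is the \emph{induced edge structure} on those $6$ vertices: $5$ edges forming a connected segment in the rank-$2$ case, versus $4$ edges forming a root plus a disjoint edge in the rank-$\frac32$ case (which is precisely the next lemma of the paper). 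Your closing symmetry argument (``the complement must look like a root, and the only rank-$2$ candidate is $P_5$'') is not correct as stated: $P_5$ is not a root, and in the other orbit the complement is disconnected. Once the rank-$2$ orbit is correctly identified, the honest proof is the explicit check you propose; e.g.\ for $\alpha=u_0u_1u_2v_2$ the vertices at distance $\geq 2$ are $u_4,u_5,u_6,v_3,v_4,v_6$ and the induced graph is the segment $v_4-u_4-u_5-u_6-v_6-v_3$ of length $5$.
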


  This gives us 3 possibilities for the position of $\beta$. One corresponds to a root of rank 2, and the two other cases, which are permuted by a graph automorphism, to a root of rank $\frac 3 2$. 
  
  In fact, the position of $\beta$ in $L$ is uniquely determined relative to $\alpha$:
  
\begin{lemma}\label{L - rank 2 case beta symmetric}
If $\rk(\alpha)=2$, then $\beta$ is the unique root of rank 2 included in $L \setminus N_2(\alpha)$.
\end{lemma}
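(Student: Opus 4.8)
The plan is to work entirely inside the Moebius--Kantor graph $L$, with the rank-$2$ root $\alpha$ fixed. By Lemma \ref{L - rank 2 case}, the subgraph $L \setminus N_2(\alpha)$ is a path $P$ of simplicial length $5$; I would name its consecutive vertices $v_0, v_1, \dots, v_5$ once and for all.

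\emph{Confining $\beta$ to $P$.} Using the lemma that asserts $N_1(\alpha)\cap N_1(\beta)=\emptyset$ together with $\alpha\cap\beta=\emptyset$, I would show that every vertex $u$ of $\beta$ lies at simplicial distance $\geq 3$ from $\alpha$. Indeed $u\in\beta\subseteq N_1(\beta)$ already gives $u\notin N_1(\alpha)$, and every neighbour $w$ of $u$ also lies in $N_1(\beta)$, hence $w\notin N_1(\alpha)$; so no vertex within distance $1$ of $u$ is within distance $1$ of $\alpha$, i.e.\ $d(u,\alpha)\geq 3$. Therefore $\beta\subseteq L\setminus N_2(\alpha)=P$, and, being a path of length $3$ inside a path of length $5$, it must be one of the three subpaths $v_0v_1v_2v_3$, $v_1v_2v_3v_4$, $v_2v_3v_4v_5$. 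This is the ``$3$ possibilities'' picture referred to in the text preceding the statement.

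\emph{Ranks of the three positions, and uniqueness.} Using the definition of the rank of a root in $L$ from \cite{chambers}, I would compute that the middle subpath $v_1v_2v_3v_4$ is a root of rank $2$, while the two outer subpaths $v_0v_1v_2v_3$ and $v_2v_3v_4v_5$ have rank $\frac 3 2$; moreover the latter two are interchanged by the end-reversing symmetry of $P$, which is realized by an automorphism of $L$ fixing $\alpha$, so they constitute a single position up to $\Aut(L)_\alpha$. This bounded computation in a $16$-vertex graph immediately yields the uniqueness clause: the middle subpath is the only root of rank $2$ contained in $L\setminus N_2(\alpha)$.

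\emph{Identifying $\beta$ with that root, and the main obstacle.} It remains to exclude the possibility that $\beta$ is one of the two rank-$\frac 3 2$ outer subpaths. Here I would invoke the symmetry of the type-$S$ collar: since a complex of rank \sq\ is built from equilateral triangles, the involution of $S$ exchanging the two pieces of the span decomposition (Lemma \ref{L - order4}(1) and the Remark following it) is realized geometrically, which, together with the way $\alpha$ and $\beta$ enter the span decompositions of the domain and range collars at the unique interior vertex, forces $\rk(\beta)=\rk(\alpha)=2$; alternatively, one argues directly that if $\beta$ is an outer subpath then the loop/face configuration dictated by Lemma \ref{L - loops descriptions} cannot be completed so that the interior link is isomorphic to $L$. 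Either way $\beta$ is the middle subpath. The first two steps are routine (a distance chase and a finite computation), so the genuine difficulty is this last step: ruling out the rank-$\frac 3 2$ positions for $\beta$ rather than merely observing that they describe the cobordism viewed through its dual. I expect the crux to be making the symmetry argument precise --- pinning down exactly which automorphism of $L$ the collar involution induces, and checking that it swaps $\alpha$ with $\beta$ and not with the span-path sitting in a boundary link.
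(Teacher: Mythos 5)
Your first two steps are fine in substance (modulo a slip noted below), but they only reproduce what the paper already takes for granted: $\beta$ lies in the segment $P=L\setminus N_2(\alpha)$, so it is one of three subpaths, of which the middle one is the unique root of rank $2$. The entire content of the lemma is your third step --- excluding the two rank-$\frac 32$ positions --- and there you have a genuine gap. Your primary mechanism, the involution $\theta$ of the type-$S$ nerve (Lemma \ref{L - order4}(1)), cannot force $\rk(\beta)=\rk(\alpha)$: $\theta$ exchanges the two halves $H_x,H_y$ of the span decomposition of a \emph{single} collar, and those halves live in the links of two \emph{different} vertices (the interior vertex and a boundary vertex of the cobordism). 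So it relates $\alpha$ to a path in a boundary link, not to $\beta$, which comes from the \emph{other} collar; there is no symmetry exchanging $C$ and $D$ available at this stage (self-duality of the cobordisms is only established at the end of the theorem's proof, and the equality $\rk(\alpha)=\rk(\beta)$ is itself recorded in the paper only as a \emph{consequence} of the present lemma --- using it as an input would be circular). Your fallback suggestion ("the loop/face configuration cannot be completed") is indeed the right kind of argument, but you do not carry it out. The paper's proof is a short counting argument: if $\beta$ were an outer (rank-$\frac 32$) subpath, there is a unique vertex $u$ of $L$ at distance $2$ from both $\alpha$ and $\beta$; every vertex of $L$ outside $\alpha\cup\beta$ is an endpoint of a loop of the model geometry, so $u$ lies on a loop, and the loop's other endpoint is then forced into $N_1(\alpha)\cup N_1(\beta)$ --- contradicting Lemma \ref{L - loops descriptions}, since a loop through $u\notin N_1(\alpha\cup\beta)$ must be disjoint from $N_1(\alpha\cup\beta)$. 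That is the missing idea.

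A smaller point: your distance chase proves too much and is in fact false. Disjointness of the \emph{open} $1$-neighborhoods only rules out common vertices and edges joining $\alpha$ to $\beta$, i.e.\ it gives $d(u,\alpha)\geq 2$ for $u\in\beta$, not $\geq 3$; indeed all six vertices of $P$ are at distance exactly $2$ from $\alpha$ (there are $16=4+6+6$ vertices), so no vertex of $\beta$ is at distance $3$. The containment $\beta\subset L\setminus N_2(\alpha)$ you need still follows, but the "$\geq 3$" step should be corrected.
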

  
\begin{proof}  
Assume that $\beta$ is one of the two roots of rank $\frac 3 2$. Let  $u$ denote the unique vertex of $L$ which is at distance 2 from both $\alpha$ and $\beta$. The unique loop $\gamma$ of $M$ through  $x$ intersect $L$ at a vertex $v$, which must be at distance 1 from either $\alpha$ or $\beta$, which is a contradiction. 
\end{proof}

Let us now consider a). We have the following analog of Lemma \ref{L - rank 2 case}:

\begin{lemma}
If $\rk(\alpha)= \frac 3 2$, then  $L \setminus N_2(\alpha)$ is the disjoint union of a root and an edge.
\end{lemma}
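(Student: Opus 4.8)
The claim is a structural statement about the Moebius--Kantor graph $L$: if $\alpha$ is a root of rank $\tfrac32$, then $L\setminus N_2(\alpha)$ is the disjoint union of a root and an edge. The plan is to work purely combinatorially inside $L$, using the explicit description of $L$ as the incidence graph of the affine plane over $\mathbb{F}_2$ (equivalently, the unique $(8+8)$-vertex cubic bipartite graph of girth $6$, equivalently the generalized Petersen graph $GP(8,3)$), together with the analysis of roots carried out in \cite{rd} (and already invoked via Lemma \ref{L - aut two orbits roots} and Lemma \ref{L - rank 2 case}). Since $\Aut(L)$ is transitive on roots of each fixed rank (Lemma \ref{L - aut two orbits roots}), it suffices to fix one representative root $\alpha$ of rank $\tfrac32$ and compute.

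\textbf{Key steps.} First I would recall that $\alpha$, being a root of rank $\tfrac32$, is a simplicial path of length $3$ in $L$ whose pointwise stabilizer acts on the complement in a way reflecting the rank (the rank $\tfrac32$ versus rank $2$ dichotomy is exactly what distinguishes how $\alpha$ sits in $L$; this is Proposition 41 of \cite{rd}). Second, I would write down the $2$-neighborhood $N_2(\alpha)$ explicitly: $L$ has $16$ vertices, $\alpha$ has $4$ vertices, its open $1$-neighborhood $N_1(\alpha)$ adds the vertices adjacent to $\alpha$ but not on it, and $N_2(\alpha)$ adds one further layer. A direct count (using that $L$ is cubic of girth $6$, so there is no "wraparound" collision until distance $3$) shows how many vertices survive in $L\setminus N_2(\alpha)$; the point is that for a rank $\tfrac32$ root this residue has more vertices than in the rank $2$ case of Lemma \ref{L - rank 2 case} — precisely enough for a path of length $3$ (a root) plus a disjoint single edge. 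Third, I would identify the induced subgraph structure on these remaining vertices, checking that it splits into exactly two connected components: one a simplicial path of length $3$, which is a root of $L$, and one a single edge. Finally, I would note (or verify by the transitivity statement) that the component of length $3$ is genuinely a root in the sense of \cite{chambers}, i.e.\ is convex/isometrically embedded, so the description is complete.

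\textbf{Main obstacle.} The only real work is the bookkeeping of the explicit neighborhood computation in $L$, and in particular making sure the argument is independent of the choice of rank $\tfrac32$ root (so that transitivity genuinely applies) and that the two components are correctly identified as "a root and an edge" rather than, say, two paths of different lengths or a path plus an isolated vertex. I expect the cleanest route is to reuse the apparatus of \cite{rd}: the proof of Lemma \ref{L - rank 2 case} presumably already sets up coordinates on $L$ and computes $L\setminus N_2$ for a rank $2$ root; the analogous computation for a rank $\tfrac32$ root should follow the same template with the root repositioned, so I would phrase the proof as "the same computation as in Lemma \ref{L - rank 2 case}, now with $\alpha$ a root of rank $\tfrac32$, yields \ldots" and carry out the (short) neighborhood count. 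The verification that the length-$3$ component is a root is then immediate from its being an isometrically embedded path, as in the setup of Lemma \ref{L - minimal linking graph}.
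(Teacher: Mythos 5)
Your overall strategy---use Lemma \ref{L - aut two orbits roots} to fix a single representative root $\alpha$ of rank $\frac 3 2$ and compute $L\setminus N_2(\alpha)$ explicitly in the Moebius--Kantor graph---is exactly what the paper intends: the lemma is stated as the straightforward analogue of Lemma \ref{L - rank 2 case}, and its proof is precisely this finite check. However, one step of your sketch, the one you offer as the conceptual reason the answer differs from the rank $2$ case, is wrong. Since $L$ is cubic of girth $6$, an embedded path $\alpha$ of length $3$ always has exactly $6$ vertices at distance $1$ (the count $2+1+1+2$ admits no coincidences, as any coincidence would create a cycle of length at most $5$), so $L\setminus N_2(\alpha)$ has exactly $16-4-6=6$ vertices \emph{whatever the rank of $\alpha$ is}. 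The rank $2$ complement (a segment of length $5$) and the rank $\frac 3 2$ complement (a root plus an edge) both have $6$ vertices; what distinguishes the two cases is not the vertex count but the induced edge set, i.e.\ which of the six surviving vertices are adjacent in $L$. So the assertion that the rank $\frac 3 2$ residue ``has more vertices\dots precisely enough for a path of length $3$ plus a disjoint single edge'' is false and cannot carry the argument; the component structure must be read off from the explicit position of a rank $\frac 3 2$ root in $L$ (as in the figures accompanying the proof of Theorem \ref{T - cobordism 74}), which is what your third step does anyway.

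Two smaller corrections. First, $L$ is not the incidence graph of the affine plane over $\IF_2$; it is the incidence graph of the Moebius--Kantor configuration $8_3$ (equivalently the generalized Petersen graph $GP(8,3)$, or $AG(2,3)$ with one point and the four lines through it removed). This does not affect the method, but the coordinates in which you carry out the neighborhood computation should come from a correct model of $L$. Second, your final verification is indeed immediate: an embedded path of length $3$ in a graph of girth $6$ is automatically isometrically embedded, hence a root. With the vertex-count argument replaced by the explicit determination of the induced subgraph on the six remaining vertices, your computation proves the lemma along the same lines the paper has in mind.
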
 

In particular, the root $\beta$ is uniquely determined relative to $\alpha$.
Note that in both cases a)  and b) we have:

\begin{lemma}  $\rk(\alpha)=\rk(\beta)$.
\end{lemma}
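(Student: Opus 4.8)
The plan is to prove the final claim $\rk(\alpha) = \rk(\beta)$ by a direct case analysis, relying on the structural lemmas already established just above it. Since Lemma \ref{L - aut two orbits roots} tells us there are exactly two orbits of roots in $L$ (those of rank $2$ and those of rank $\frac 3 2$), and by the preceding discussion the position of $\beta$ in $L$ is always uniquely determined relative to $\alpha$, it suffices to inspect the two cases a) $\rk(\alpha) = \frac 3 2$ and b) $\rk(\alpha) = 2$ and check that the forced position of $\beta$ has the same rank in each case.

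First I would dispose of case b). By Lemma \ref{L - rank 2 case}, when $\rk(\alpha) = 2$ the complement $L\setminus N_2(\alpha)$ is a segment of simplicial length $5$; and by Lemma \ref{L - rank 2 case beta symmetric}, the root $\beta$ — which must be disjoint from $N_1(\alpha)$ by the disjointness-of-neighborhoods lemma, hence actually contained in $L\setminus N_2(\alpha)$ after accounting for the way the span decomposition sits in the link — is forced to be the unique root of rank $2$ inside that segment. Hence $\rk(\beta) = 2 = \rk(\alpha)$ in this case. The only subtle point here is to confirm that $\beta$, as one of the two span-decomposition paths of length $3$, really does land inside $L\setminus N_2(\alpha)$ rather than merely outside $N_1(\alpha)$; this follows because a path of length $3$ in $L$ that meets $N_2(\alpha)\setminus N_1(\alpha)$ but not $N_1(\alpha)$ would, together with the loop structure described in Lemma \ref{L - loops descriptions}, violate the constraint that loops through a vertex at distance $2$ from $\alpha$ touch $N_1(\alpha)$ — exactly the mechanism used in the proof of Lemma \ref{L - rank 2 case beta symmetric}.

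Next I would handle case a). By the lemma immediately preceding the statement, when $\rk(\alpha) = \frac 3 2$ the complement $L\setminus N_2(\alpha)$ is the disjoint union of a root $\rho$ and an edge $f$. The root $\beta$ must be $\pi_1$-disjoint from $\alpha$ and, by the same neighborhood argument, cannot use the stray edge $f$ (a path of length $3$ has $3$ edges, so it cannot be contained in a single edge, and if it used $f$ together with edges nearer $\alpha$ it would re-enter $N_2(\alpha)$ and again hit a loop-contradiction). Therefore $\beta$ coincides with the root $\rho$, and I claim $\rk(\rho) = \frac 3 2$: indeed, in the Moebius--Kantor graph $L$, one checks directly — or extracts from the analysis in \cite{rd} underlying Lemma \ref{L - aut two orbits roots} — that $N_2(\alpha)$ of a rank-$\frac 3 2$ root $\alpha$ has exactly the right size so that the remaining root in its complement is again of rank $\frac 3 2$ (a rank-$2$ root would have $2$-neighborhood too large to coexist). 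Hence $\rk(\beta) = \frac 3 2 = \rk(\alpha)$.

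The main obstacle I anticipate is the purely combinatorial verification in case a) that the residual root $\rho \subset L\setminus N_2(\alpha)$ has rank $\frac 3 2$ and not $2$ — this requires knowing the precise incidence combinatorics of the Moebius--Kantor graph and the sizes of the second neighborhoods $N_2$ of its roots of each rank. This is a finite check (the graph has $16$ vertices and $24$ edges), and the cleanest route is to note that the automorphism group of $L$ acts with two orbits on (root, complementary-root) configurations, so that rank is an invariant of the configuration; combined with the fact, already implicit in the statement of the case-a) lemma, that the complement is ``a root and an edge'' rather than ``two edges'' or ``a rank-$2$ root'', the rank of $\beta$ is pinned down. Everything else is an immediate consequence of lemmas already proved, so the proof is short: it is essentially the sentence ``by Lemmas \ref{L - rank 2 case beta symmetric} and the case-a) lemma, $\beta$ is determined and has the same rank as $\alpha$ in both cases.''
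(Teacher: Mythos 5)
Your case b) is fine and is exactly the paper's (implicit) reasoning: Lemma \ref{L - rank 2 case beta symmetric} already states that $\beta$ is the unique root of rank $2$ contained in $L\setminus N_2(\alpha)$, so $\rk(\beta)=2=\rk(\alpha)$ there, and the containment discussion you add is not even needed to invoke that lemma. The gap is in case a): the whole content of the statement in that case is the claim that the root occurring in $L\setminus N_2(\alpha)$ when $\rk(\alpha)=\tfrac 3 2$ has rank $\tfrac 3 2$, and you assert this rather than prove it. Neither of your two suggested justifications works as stated. Lemma \ref{L - aut two orbits roots} gives two orbits of \emph{roots}, not two orbits of pairs (root, complementary root); transitivity of $\Aut(L)$ on rank-$\tfrac 3 2$ roots only tells you that all these complements are isomorphic to each other, not what the rank of the complementary root is — you still have to inspect one configuration, which is precisely the finite check you defer. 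And nothing is ``implicit in the statement'' of the case-a) lemma: ``the disjoint union of a root and an edge'' says nothing about the rank of that root. So as written, the key step of case a) is an unverified assertion supported by a heuristic (``$2$-neighborhood too large to coexist'') that is never substantiated.

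The fix is easy, and cleaner than the finite check (which is what the paper's two drawings of the configurations effectively record). The roles of $\alpha$ and $\beta$ are completely symmetric: they are the two components of the span decomposition of the boundary collars, and all the lemmas leading up to this point (disjointness of the $1$-neighborhoods, Lemma \ref{L - loops descriptions}, Lemmas \ref{L - rank 2 case} and \ref{L - rank 2 case beta symmetric}) apply verbatim with $\alpha$ and $\beta$ interchanged (equivalently, pass to the dual cobordism). Hence, in case a), if one had $\rk(\beta)=2$, then Lemma \ref{L - rank 2 case beta symmetric} applied with $\beta$ in the role of $\alpha$ would force $\alpha$ to be the unique rank-$2$ root in $L\setminus N_2(\beta)$, so $\rk(\alpha)=2$, contradicting $\rk(\alpha)=\tfrac 3 2$. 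Since the only possible ranks are $\tfrac 3 2$ and $2$, this yields $\rk(\beta)=\tfrac 3 2$ with no additional combinatorics, and together with your case b) it completes the proof in the same spirit as the paper, which presents the lemma as an immediate consequence of the two uniqueness statements.
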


We shall refer to a) as the rank $\frac 3 2$ case and to b) as the rank $2$ case. They are represented   on the left hand side and respectively  the right hand side in the figure below.

\begin{center}
\includegraphics[width=4cm]{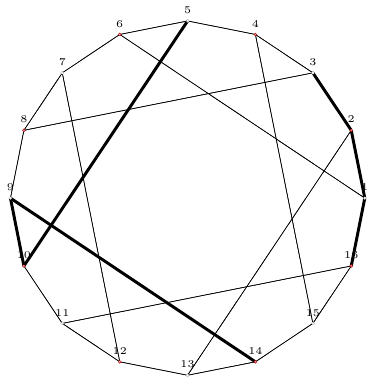}\hskip1cm 
\includegraphics[width=4cm]{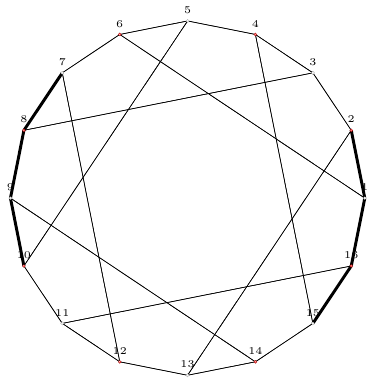} 
\end{center}
 
In order to prove Theorem \ref{T - cobordism 74},  we have to show that there exists precisely one group cobordism for each of these two cases. 
We consider the rank $\frac 3 2$ case  (left) first.

By Lemma \ref{L - loops descriptions}, the loops connect points in the 1-neighbourhood of the roots, and there exists a unique edge at distance 2 from both roots (edge $(7,12)$ in the figure). This edge must have its two extremities in a loop, say $\gamma_1$. 

We have to show that:

\begin{lemma}\label{L - gamma determined} There exists a unique choice for the three remaining loops $\gamma_2,\gamma_3,\gamma_4$ in $X$, up to permutation of the symbols.
\end{lemma}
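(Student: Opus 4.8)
\textbf{Proof proposal for Lemma \ref{L - gamma determined}.}

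The plan is to analyze the combinatorics of the Moebius--Kantor graph $L$ directly. By the preceding lemmas we know the exact position of the two roots $\alpha$ and $\beta$ (rank $\frac32$ case), the set $N_1(\alpha\cup\beta)$ of vertices available to be joined by loops, and the fact (Lemma \ref{L - loops descriptions}) that every loop of $X$ either avoids $N_1(\alpha\cup\beta)$ altogether, or joins two vertices of $N_1(\alpha)$, or joins two vertices of $N_1(\beta)$. First I would record explicitly, from the figure, which vertices lie in $N_1(\alpha)\setminus\alpha$ and in $N_1(\beta)\setminus\beta$, together with the vertices and edges lying at distance $\geq 2$ from both roots; in particular the distinguished edge $(7,12)$ and the four vertices that are to be matched by $\gamma_1,\dots,\gamma_4$. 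Since $\gamma_1$ is already forced to be the loop carrying the extremities of edge $(7,12)$, the remaining task is to match up the other endpoints.

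The key step is a counting/parity argument on the weight equation together with the link condition. Each loop of the model geometry $X$ is the boundary of a collection of triangles glued around the central vertex, and the corresponding faces contribute to $L$ a specific short subpath; the constraint that $L$ be exactly the Moebius--Kantor graph (and not some graph with a vertex of valence $\neq 3$ or with girth $<5$) pins down which pairs of endpoints can be joined. Concretely, I would argue that once $\gamma_1$ is fixed, a loop $\gamma_i$ joining two vertices in $N_1(\alpha)$ is determined up to a binary choice, and similarly on the $\beta$ side; then I would rule out the ``wrong'' choices by showing they either create a double edge in $L$ (contradicting the fact that the Moebius--Kantor graph is simple and has girth $5$, and also contradicting strict convexity as in the proof of Lemma \ref{L - minimal linking graph}), or produce a vertex of valence $4$, or disconnect one of the span graphs $H_x,H_y$ in a way incompatible with Lemma \ref{L-minimal collar spans}. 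What survives is a single configuration up to relabeling the symbols $\gamma_2,\gamma_3,\gamma_4$, which is exactly the assertion.

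I would organize the case check symmetrically: the graph automorphism exchanging the two ``sides'' of $L$ (which exists by Lemma \ref{L - aut two orbits roots}, since $\rk(\alpha)=\rk(\beta)$) lets me treat the $\alpha$-loops and $\beta$-loops on the same footing, so effectively only one local matching problem needs to be solved, and then the remaining loop (type ``disjoint from $N_1(\alpha\cup\beta)$'', if any) is forced by what is left over. The main obstacle I anticipate is bookkeeping: making the vertex labels of the Moebius--Kantor graph from the figure precise enough that ``only one matching works'' is genuinely verified rather than asserted, and making sure no exotic loop configuration (e.g.\ a loop of length $>3$, or two loops sharing a face) has been overlooked. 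This is where I would lean hardest on the girth-$5$ property of $L$ and on strict convexity of the faces to eliminate candidates, keeping the verification finite and explicit.
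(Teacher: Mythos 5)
There is a genuine gap: the constraints you propose to do the eliminating are not the ones that actually bite. The link $L$ is \emph{given} to be the Moebius--Kantor graph; the loops of the model geometry do not add edges to $L$, they only pair up some of its vertices (each loop meets $L$ in two vertices). So ruling out ``wrong'' matchings by saying they would create a double edge in $L$, a vertex of valence $4$, or violate girth $5$ is vacuous --- no choice of pairing alters $L$ --- and the weight equation is likewise a global edge count that does not distinguish the candidate pairings. What actually forces uniqueness in the paper is different, and your plan never reaches it: (i) a core triangle of $X$ is glued on either $2$ or $3$ distinct loops, and since $\gamma_1$ joins the two ends of the edge $(7,12)$ (two adjacent edges of $L$), it must carry one core triangle of each kind; this is what introduces $\gamma_2$ (from the triangle on $2$ loops) and $\gamma_3,\gamma_4$ (from the triangle on $3$ loops) and shows there are exactly two symmetric choices $(6,11)$ or $(8,13)$ for $\gamma_2$; (ii) by Lemma \ref{L - loops descriptions}, any loop meeting $N_1(\alpha)$ or $N_1(\beta)$ bounds a triangle of the corresponding boundary collar, and the \emph{local geometry induced by a collar of type $S$ or $T$ near its root} (the paper's Figure \ref{F - local geometry collar}, obtained by a separate direct computation) dictates exactly which pairs of link vertices adjacent to a root can be joined by such loops. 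It is this collar-induced constraint, not any intrinsic property of $L$, that pins down $\gamma_3=(4,13)$ and then $\gamma_4=(8,15)$ up to relabeling.

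You do gesture in the right direction when you invoke compatibility with the span decomposition of Lemma \ref{L-minimal collar spans}, and your use of the symmetry exchanging the two roots matches the paper's reduction of the two choices for $\gamma_2$ to one case. But without the dichotomy on core triangles and without the explicit type-$S$/type-$T$ local models near the roots, your ``finite explicit check'' has no criterion against which to test a candidate matching, so the uniqueness assertion would remain unverified.
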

 
\begin{proof}
A triangle in the core of $X$ can either be glued on 2 or on 3 distinct loops.   
Since $\gamma_1$ connects two adjacent edges of $L$, it must contains a core triangle of each type. We denote $\gamma_2$ the loop connecting the edges of the first triangle face (say $f$) and $\gamma_3,\gamma_4$ the loops connecting the edges of the second triangle face (say $g$). 

There are two solutions for $\gamma_2$, namely $(6,11)$ or $(8,13)$, which are symmetric. 
\begin{center}
\includegraphics[width=4cm]{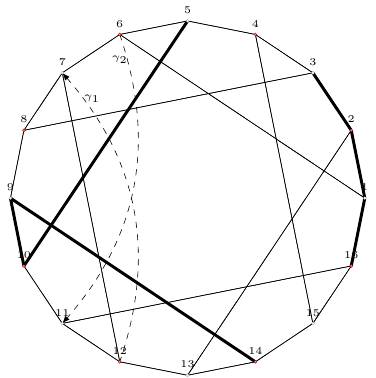}\hskip1cm
\includegraphics[width=4cm]{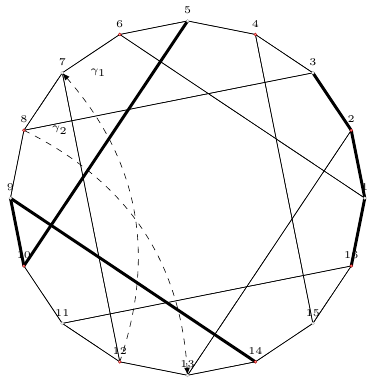}
\end{center}
The link support of $g$ is therefore, respectively, 
\[
[(12,13),(7,8),(4,15)]\text{ or, } [(11,12),(6,7),(4,15)].
\]
We have to show that these two configurations exclude collars of type $T$, but are compatible with collars of type $S$. We will show that there is a unique way to extend the first configuration, and by symmetry, and unique way to extend the second, which are therefore isomorphic as group cobordisms. The unique extension is represented in Figure \ref{F - Cobordism 32}.

A direct computation shows  that the collars of type $S$ and $T$ induce the following local geometry in $X$: 

\begin{figure}[h]
\begin{tikzpicture}[line join=bevel,z=-5.5]
\tikzstyle{every node}=[font=\tiny]

\coordinate (a) at (0,3);
\coordinate (b) at (0,2);
\coordinate (c) at (0,1);
\coordinate (d) at (0,0);

\coordinate (e) at (1,3);
\coordinate (f) at (1,2);
\coordinate (g) at (1,1);
\coordinate (h) at (1,0);

\coordinate (e1) at (2,3.5);
\coordinate (e2) at (2,3);
\coordinate (f1) at (2,2);
\coordinate (g1) at (2,1);
\coordinate (h1) at (2,0);
\coordinate (h2) at (2,-0.5);

\draw[solid,line width=0.3mm,color=black,-] (a) -- (b);
\draw[solid,line width=0.3mm,color=black,-] (b) -- (c);
\draw[solid,line width=0.3mm,color=black,-] (c) -- (d);
\draw[solid,line width=0.3mm,color=black,-] (e) -- (f);
\draw[solid,line width=0.3mm,color=black,-] (f) -- (g);
\draw[solid,line width=0.3mm,color=black,-] (g) -- (h);
\draw[solid,line width=0.1mm,color=black,-] (e) -- (e2);
\draw[solid,line width=0.1mm,color=black,-] (e) -- (e1);
\draw[solid,line width=0.1mm,color=black,-] (f) -- (f1);
\draw[solid,line width=0.1mm,color=black,-] (g) -- (g1);
\draw[solid,line width=0.1mm,color=black,-] (h) -- (h1);
\draw[solid,line width=0.1mm,color=black,-] (h) -- (h2);

\draw[dashed,line width=0.1mm,color=black,-] (a) -- (f);
\draw[dashed,line width=0.1mm,color=black,-] (b) -- (e);
\draw[dashed,line width=0.1mm,color=black,-] (c) -- (h);
\draw[dashed,line width=0.1mm,color=black,-] (d) -- (g);
\draw[dashed,line width=0.1mm,color=black,-] (e2) to[bend left] (f1);
\draw[dashed,line width=0.1mm,color=black,-] (g1) to[bend left] (h1);
\draw[dashed,line width=0.1mm,color=black,-] (e1) to[bend left] (h2);

\draw (a) node[below left =10pt]{$a$};
\draw (b) node[below left =10pt]{$b$};
\draw (c) node[below left =10pt]{$c$};

\draw (e) node[above right =10pt]{$b$};
\draw (e) node[right =10pt]{$a$};
\draw (f) node[right =10pt]{$a$};
\draw (g) node[right =10pt]{$c$};
\draw (h) node[right =10pt]{$c$};
\draw (h) node[below right =10pt]{$b$};

\shade[ball color=black] (a) circle (0.2ex);
\shade[ball color=black] (b) circle (0.2ex);
\shade[ball color=black] (c) circle (0.2ex);
\shade[ball color=black] (d) circle (0.2ex);
\shade[ball color=black] (e) circle (0.2ex);
\shade[ball color=black] (f) circle (0.2ex);
\shade[ball color=black] (g) circle (0.2ex);
\shade[ball color=black] (h) circle (0.2ex);
\shade[ball color=black] (e1) circle (0.2ex);
\shade[ball color=black] (e2) circle (0.2ex);
\shade[ball color=black] (f1) circle (0.2ex);
\shade[ball color=black] (g1) circle (0.2ex);
\shade[ball color=black] (h1) circle (0.2ex);
\shade[ball color=black] (h2) circle (0.2ex);

\end{tikzpicture}
\hskip1cm
\begin{tikzpicture}[line join=bevel,z=-5.5]
\tikzstyle{every node}=[font=\tiny]

\coordinate (a) at (0,3);
\coordinate (b) at (0,2);
\coordinate (c) at (0,1);
\coordinate (d) at (0,0);

\coordinate (e) at (1,3);
\coordinate (f) at (1,2);
\coordinate (g) at (1,1);
\coordinate (h) at (1,0);

\coordinate (e1) at (2,3.5);
\coordinate (e2) at (2,3);
\coordinate (f1) at (2,2);
\coordinate (g1) at (2,1);
\coordinate (h1) at (2,0);
\coordinate (h2) at (2,-0.5);

\draw[solid,line width=0.3mm,color=black,-] (a) -- (b);
\draw[solid,line width=0.3mm,color=black,-] (b) -- (c);
\draw[solid,line width=0.3mm,color=black,-] (c) -- (d);
\draw[solid,line width=0.3mm,color=black,-] (e) -- (f);
\draw[solid,line width=0.3mm,color=black,-] (f) -- (g);
\draw[solid,line width=0.3mm,color=black,-] (g) -- (h);
\draw[solid,line width=0.1mm,color=black,-] (e) -- (e2);
\draw[solid,line width=0.1mm,color=black,-] (e) -- (e1);
\draw[solid,line width=0.1mm,color=black,-] (f) -- (f1);
\draw[solid,line width=0.1mm,color=black,-] (g) -- (g1);
\draw[solid,line width=0.1mm,color=black,-] (h) -- (h1);
\draw[solid,line width=0.1mm,color=black,-] (h) -- (h2);

\draw[dashed,line width=0.1mm,color=black,-] (a) -- (f);
\draw[dashed,line width=0.1mm,color=black,-] (b) -- (h);
\draw[dashed,line width=0.1mm,color=black,-] (c) -- (e);
\draw[dashed,line width=0.1mm,color=black,-] (d) -- (g);
\draw[dashed,line width=0.1mm,color=black,-] (e1) to[bend left] (h2);
\draw[dashed,line width=0.1mm,color=black,-] (e2) to[bend left] (g1);
\draw[dashed,line width=0.1mm,color=black,-] (f1) to[bend left] (h1);

\draw (a) node[below left =10pt]{$a$};
\draw (b) node[below left =10pt]{$b$};
\draw (c) node[below left =10pt]{$c$};

\draw (e) node[above right =10pt]{$b$};
\draw (e) node[right =10pt]{$c$};
\draw (f) node[right =10pt]{$a$};
\draw (g) node[right =10pt]{$c$};
\draw (h) node[right =10pt]{$a$};
\draw (h) node[below right =10pt]{$b$};

\shade[ball color=black] (a) circle (0.2ex);
\shade[ball color=black] (b) circle (0.2ex);
\shade[ball color=black] (c) circle (0.2ex);
\shade[ball color=black] (d) circle (0.2ex);
\shade[ball color=black] (e) circle (0.2ex);
\shade[ball color=black] (f) circle (0.2ex);
\shade[ball color=black] (g) circle (0.2ex);
\shade[ball color=black] (h) circle (0.2ex);
\shade[ball color=black] (e1) circle (0.2ex);
\shade[ball color=black] (e2) circle (0.2ex);
\shade[ball color=black] (f1) circle (0.2ex);
\shade[ball color=black] (g1) circle (0.2ex);
\shade[ball color=black] (h1) circle (0.2ex);
\shade[ball color=black] (h2) circle (0.2ex);

\end{tikzpicture}
\caption{Local geometry induced by a collar of type $S$ (left)  and  a collar of type $T$ (right)}\label{F - local geometry collar}
\end{figure}
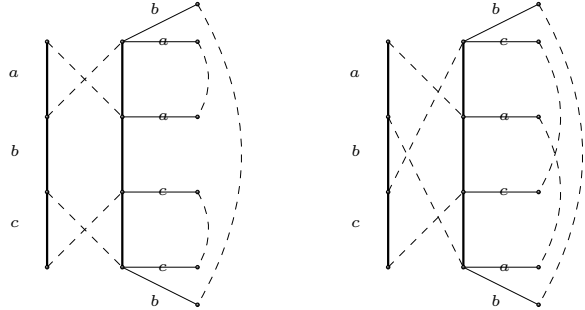

In view of the local geometry, for both types of collars, the loop $\gamma_3$ is determined by the given configuration to have support $(4,13)$, which in turn, forces $\gamma_4$ (up to permutation of indices) to have support $(8,15)$. 
\end{proof}

Lemma \ref{L - gamma determined} determines the faces $a$, $b$, $c$ of the left collar. They are shown in Figure \ref{F - Cobordism 32}. Note that $a$ and $c$ are adjacent in the root $\alpha$, which proves that the left hand side collar  is of type $S$. Furthermore, it turns out that it is indeed possible to extend this configuration by adding three more faces $a'$, $b'$, $c'$ belonging to the right hand side collar. Again, $a'$ and $c'$ are adjacent, and the right hand side collar must be of type $S$.

We summarize these results as follows.

\begin{figure}[h]
\includegraphics[width=5cm]{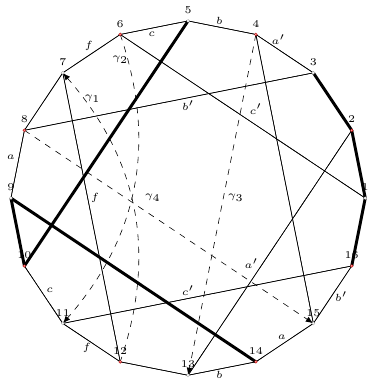}
\caption{The first (rank $\frac 3 2$) group cobordism of Moebius--Kantor type (drawing of the core).}\label{F - Cobordism 32}
\end{figure}

It remains to show that:

\begin{lemma} The first group cobordism of Moebius--Kantor type is  self-dual.
\end{lemma}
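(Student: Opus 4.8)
The plan is to exhibit an explicit automorphism of the ambient complex $X$ that swaps the domain collar with the range collar. Concretely, the cobordism $X\colon C\to D$ has a single interior vertex whose link $L$ is the Moebius--Kantor graph, and by the analysis just completed (Lemmas \ref{L - rank 2 case beta symmetric} and the rank $\frac 3 2$ analog, together with Lemma \ref{L - gamma determined}) the core of $X$ is rigidly determined: the two roots $\alpha,\beta\subset L$ coming from the span decompositions of the two boundary collars are uniquely positioned relative to one another, and the four loops $\gamma_1,\gamma_2,\gamma_3,\gamma_4$ with their supporting edges are pinned down up to relabelling. First I would recall that, since $\rk(\alpha)=\rk(\beta)$ (the last unnumbered lemma before the figure) and the pair $(\alpha,\beta)$ is symmetric in $L$, the automorphism group of $L$ contains an element $\tau$ that interchanges $\alpha$ and $\beta$: in the rank $\frac 3 2$ case this is visible directly from the symmetry of the picture (the edge $(7,12)$ at distance $2$ from both roots is the fixed locus, and reflecting through it swaps the two roots), and this is exactly the graph symmetry already used implicitly when we observed that the two solutions $(6,11)$ and $(8,13)$ for $\gamma_2$ are symmetric.

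Next I would argue that $\tau$ extends to a simplicial automorphism $\tilde\tau\colon X\to X$. This is where the rigidity from Lemma \ref{L - gamma determined} does the work: the faces $a,b,c$ of the left collar and $a',b',c'$ of the right collar, together with the core triangles glued to the loops, are the unique completion of the configuration (Figure \ref{F - Cobordism 32}), so any graph automorphism of $L$ preserving the combinatorial type of the loop data automatically lifts to the unique model geometry realizing that data. Since $\tau$ sends the loop supports on the $\alpha$-side to the loop supports on the $\beta$-side (and conversely), it carries the faces of $C$ to the faces of $D$ and vice versa; hence $\tilde\tau$ interchanges $\overline C$ and $\overline D$ as subcomplexes of $X$, and in particular $\tilde\tau(\del^-C)=\del^+D$ and $\tilde\tau(\del^+D)=\del^-C$. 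Composing with the order-reversal $t\mapsto 1-t$ on the collar parameter (the operation defining duals of collars, cf.\ \ts\ref{S - Collars}), we obtain an isomorphism of group cobordisms $X\xrightarrow{\ \simeq\ } X'$ in the sense of Definition \ref{D - cobordism isomorphism}, which is precisely the assertion of self-duality.

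The main obstacle I expect is verifying that $\tau$ genuinely extends to $X$ rather than merely to the link, i.e.\ checking compatibility of $\tau$ with the cyclic orderings of edges around the loops that the core faces impose. This is a finite check, but it must be done on the explicit loop supports $(7,12)$, $(6,11)$ (or $(8,13)$), $(4,13)$, $(8,15)$ read off from Figure \ref{F - Cobordism 32}: one confirms that $\tau$ maps this set of four unordered edge-pairs to itself (swapping the $\alpha$-adjacent pair with the $\beta$-adjacent pair and the two ``far'' loops with each other) and respects which pairs of loops share a common core triangle. Granting that, the extension is forced by uniqueness, and the rank $2$ case is handled by the identical argument using the symmetric position of $\beta$ established in Lemma \ref{L - rank 2 case beta symmetric}.
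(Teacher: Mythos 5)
Your overall strategy---exhibit an involution of the core that exchanges the two roots, hence the two boundary collars, and then use the rigidity of Lemma \ref{L - gamma determined} to extend it to an isomorphism of $X$ with its dual in the sense of Definition \ref{D - cobordism isomorphism}---is the same in spirit as the paper's. The gap is that you never actually pin down the involution, and the candidate you do describe is the one that does not work. You propose the symmetry ``reflecting through the edge $(7,12)$'', with that edge as fixed locus, i.e.\ with $7$ and $12$ fixed. The paper's proof proceeds by elimination and rules out precisely this case: a duality must exchange the roots and preserve the loops, so it fixes $\{7,12\}$ as a set, leaving only the options $3\to 5,\ 7\to 7$ or $3\to 14,\ 7\to 12$; in the first option one is forced to fix $12$, then $15$, then $8$, and finally the whole tripod at $7$, contradicting the fact that the two halves of the span decomposition must be swapped. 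The only surviving possibility is $3\to 14,\ 7\to 12$, which determines the fixed-point-free involution $(3,14)(2,9)(1,10)(16,5)(7,12)(8,13)(6,11)(4,15)$. Note that under it $\gamma_1$ and $\gamma_2$ are preserved (with orientation reversed) while $\gamma_3$ and $\gamma_4$ are exchanged---not the pattern you predict (``$\alpha$-adjacent pair swapped with $\beta$-adjacent pair, the two far loops swapped with each other''), and it has no fixed vertices at all, so calling the edge $(7,12)$ a fixed locus is already a sign the wrong symmetry is being used.

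Consequently the ``finite check'' you defer is not a formality: it is the entire content of the lemma, and carried out on your stated candidate it fails. The existence of \emph{some} graph automorphism of $L$ exchanging $\alpha$ and $\beta$ (which your symmetry-of-the-picture remark, or Lemma \ref{L - aut two orbits roots}, does give) is not sufficient; compatibility with the four loops and the core triangles of Figure \ref{F - Cobordism 32} eliminates one of the two root-exchanging options and forces the other. Once the correct involution is identified, your rigidity argument does the rest exactly as in the paper: one verifies that this explicit permutation preserves the loop supports and face incidences, hence defines an automorphism of the group cobordism interchanging the two collar boundaries, which is self-duality. So the proposal needs to be repaired by replacing the guessed reflection with the elimination argument (or at least by actually performing the finite verification, which would reveal the correct involution).
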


\begin{proof}
Assuming that the duality automorphism exist, it has to exchange the roots and in particular it can only send 3 to 5 or to 14. Since it must also preserve the loops, so in particular it must fix $\{7,12\}$ as a set.  Since $d(3,7)=2$ and $d(3,12)=3$, we have either $3\to 5, 7\to 7$ or $3\to 14, 7\to 12$.   The former case implies $16\to 14,12\to12$, so 15 is fixed, it follows that 8 is also fixed, and so is the tripod at 7, which is a contradiction. Therefore, the partial permutation
\[
(3,2,1,16,7)\lra (14,9,10,5,12)
\] 
is determined, and particular, $8\lra 13$ and $6\lra 11$, so $\gamma_1$ and $\gamma_2$ are preserved (reversing the orientation), while $\gamma_3$ and $\gamma_4$ are permuted. It follows that the automorphism can be written as the following product of eight transpositions:
\[
(3,14)(2,9)(1,10)(16,5)(7,12)(8,13)(6,11)(4,15).
\] 
Conversely, one checks that this indeed defines an automorphism of the given group cobordism (in the sense of Def.\ \ref{D - cobordism isomorphism}).
\end{proof}

We shall now turn to the case $\rk(\alpha)=2$.

By Lemma \ref{L - rank 2 case}, $L \setminus N_2(\alpha)$ is a segment of simplicial length 5 and, by Lemma \ref{L - rank 2 case beta symmetric}, the same is true of $L \setminus N_2(\beta)$. Let us call $\overline \beta$ and $\overline \alpha$ these two segments, respectively. The extremities of $\overline \alpha$ and at distance 1 from $\alpha$ and 2 from $\beta$, and similarly for that of $\overline \beta$.  By   Lemma \ref{L - loops descriptions}, there must therefore exist a loop $\gamma_1$ joining the extremities of $\overline\alpha$, and similarly a loop $\gamma_2$ joining the extremities of $\overline \beta$.

Therefore, the two other loops must connects vertices pairwise in the set $\{3,6,11,14\}$. There is at most one possibility that is compatible with the model geometries in Figure \ref{F - local geometry collar}. 

\begin{figure}[h]
\includegraphics[width=5cm]{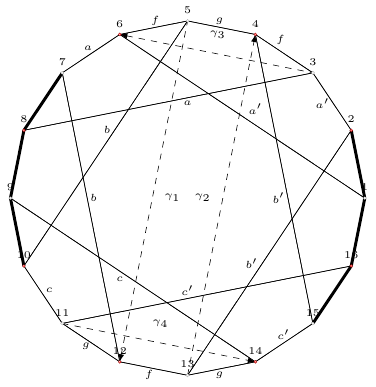}
\caption{The second (rank $2$) group cobordism of Moebius--Kantor type (drawing of the core).}\label{F - Cobordism 2}
\end{figure}

Conversely, it is easy to show that this configuration extends to a model geometry of type $A_\MK$, by completing the two remaining faces $f$ and $g$, as shown in Figure \ref{F - Cobordism 2}.

Finally note that, as in the rank $\frac 3 2$ case:

\begin{lemma} The second group cobordism of Moebius--Kantor type is  self-dual.
\end{lemma}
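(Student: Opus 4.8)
The plan is to follow verbatim the strategy used for the first (rank $\frac32$) cobordism: assuming a self-duality exists, constrain it first on the link $L$ of the interior vertex, pin it down to a single candidate permutation of the labelled vertices, and then verify that this permutation does extend to a $2$-complex automorphism of the core exchanging the two collars. A self-duality of $X\colon C\to D$ is a $2$-complex isomorphism $\varphi\colon X\to X$ carrying $\overline C$ to $\overline D$ (with reversed collar parameter) and $\overline D$ to $\overline C$; in particular $\varphi$ induces a graph automorphism of $L$ that exchanges the two roots $\alpha$ and $\beta$ coming from the span decompositions of $C$ and $D$, and it carries the set of loops of the core onto itself.

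First I would record that we are in the \emph{symmetric} sub-case: by Lemma \ref{L - rank 2 case}, $L\setminus N_2(\alpha)$ is a segment $\overline\beta$ of simplicial length $5$, and by Lemma \ref{L - rank 2 case beta symmetric} (applied to both roots) $L\setminus N_2(\beta)=\overline\alpha$ is likewise a segment of length $5$, with $\beta$ the unique rank-$2$ root inside $\overline\alpha$ and $\alpha$ the unique rank-$2$ root inside $\overline\beta$. Thus the whole configuration $\{\alpha,\beta,\overline\alpha,\overline\beta\}$ is manifestly invariant under interchanging $\alpha$ and $\beta$, and since by Lemma \ref{L - aut two orbits roots} the automorphism group of the Moebius--Kantor graph is transitive on rank-$2$ roots, such an interchange is realized by an honest graph automorphism $\iota\colon L\to L$. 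Next I would pin $\iota$ down on the loops: by Lemma \ref{L - loops descriptions} the four loops split as $\gamma_1$ joining the extremities of $\overline\alpha$, $\gamma_2$ joining the extremities of $\overline\beta$, and $\gamma_3,\gamma_4$ connecting the vertices $\{3,6,11,14\}$ pairwise. Since $\iota$ exchanges $\overline\alpha$ with $\overline\beta$ it must send $\gamma_1\leftrightarrow\gamma_2$, and it permutes $\gamma_3,\gamma_4$; because there is (as noted in the construction of Figure \ref{F - Cobordism 2}) at most one placement of $\gamma_3,\gamma_4$ compatible with the local geometry induced by the collars in Figure \ref{F - local geometry collar}, the action of $\iota$ on $\{3,6,11,14\}$ and hence on all of $L^0$ is forced, and one reads off the explicit product of transpositions exactly as was done in the rank $\frac32$ case.

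It then remains to check that this forced vertex permutation extends to the faces of the core. The core of $X$ consists of the four loops together with the core triangles (and the two faces $f,g$ completing the model geometry, drawn in Figure \ref{F - Cobordism 2}). Since $\iota$ preserves the loop structure and, by the computation above, matches the link supports of all these faces---the local-geometry table of Figure \ref{F - local geometry collar} being itself symmetric under the $S$-collar involution $\theta$ of Lemma \ref{L - order4}(1)---it sends each face to a face, hence defines a $2$-complex automorphism of $X$ interchanging $\overline C$ and $\overline D$, i.e.\ a self-duality in the sense of Definition \ref{D - cobordism isomorphism}.

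The step I expect to be the main obstacle is the final verification: one must be sure that the unique candidate extension $\iota$ is genuinely compatible with \emph{every} core face, including $f$ and $g$, and that the induced orientation-reversal on $\gamma_1,\gamma_2$ (which already occurred in the rank $\frac32$ computation) is consistent throughout. This is a finite but somewhat delicate check, best carried out by writing $\iota$ out as an explicit permutation of the labelled vertices in Figure \ref{F - Cobordism 2} and confirming face-by-face, in exact parallel with the self-duality proof for the first group cobordism.
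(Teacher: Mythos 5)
Your proposal follows exactly the route the paper intends: for this second lemma the paper supplies no separate argument at all (it is asserted ``as in the rank $\frac 3 2$ case''), and your plan---force the candidate exchange of the roots $\alpha,\beta$ on the link using Lemmas \ref{L - rank 2 case} and \ref{L - rank 2 case beta symmetric}, pin down its effect on the loops $\gamma_1,\dots,\gamma_4$, then verify face-by-face that it defines a cobordism automorphism swapping $\overline C$ and $\overline D$---is precisely the argument carried out explicitly for the first cobordism. The only thing separating your outline from a complete proof is the deferred finite check: as in the rank $\frac 3 2$ case you should write the candidate out as an explicit product of transpositions of the labelled vertices of Figure \ref{F - Cobordism 2} (including its compatibility with the faces $f$ and $g$), since that verification, not the existence of a graph automorphism of $L$ exchanging the roots, is the actual content of the lemma.
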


This concludes the proof of Theorem \ref{T - cobordism 74}.
\end{proof}

\section{Constructions with prescribed type: an example}\label{S - 158}

The aim of the present section is, a) to construct a compact (connected) simplicial complex $X$ of strict type $A_\MK+\tilde A_2$ with two vertices, whose links are respectively  the Moebius--Kantor graph  the incidence graph of the Fano plane and b) to illustrate the constructibility problem concluding the discussion of types in \ts\ref{S - types}.

The Moebius--Kantor graph has 16 vertices and 24 edges, and the double cover $X$ constructed in \ts\ref{S-fake cover}  therefore has 16 faces (and Euler characteristic 2). 

The incidence graph of the Fano plane $P^2\IF_2$, on the other hand, has 14 vertices and 21 edges. Both graphs are transitive and cubic, and there is no a priori obstruction---whether it be combinatorial or of homogeneity---to the existence of a complex $X$ as described in a) above. Such a complex, if it exists, must have 15 faces (and Euler characteristic 2).

It is unclear however that the lack of combinatorial and homogeneity obstructions ensures that  a 2-complex of the given (strict) type actually exists. This is the theme of the type constructibility problem from Section \ref{S - types} mentioned in b).  

Our goal is to construct a complex of strict metric type $A_\MK+\tilde A_2$, which is defined by:
 \begin{enumerate}
 \item[-] the Moebius-Kantor graph and the incidence graph of the Fano plane $P^2\FI_2$, where all edges have length $\pi/3$ \item[-] one equilateral triangle  
 \end{enumerate}

Consider the complex  $X$ defined by the following 15 faces:
\[
X:=[[1,2,3],[1,8,13],[1,12,4],[2,13,10],[2,12,7],[3,7,6],[3,14,7],
\]
\[
[4,4,5],[5,15,14],[5,14,15],[6,9,11],[6,11,9],[8,8,9],[10,13,15],[10,11,12]]
\] 

It is not hard to check that the corresponding group $G$ admits a presentation with 3 generators and 4 relations:
\[
G:=\ip{a,b,c\mid a^2b^{2}=b^2a^{2},\ 
cac^{-1}=b^{3}a^{-2},\
a^2c^{-1}=ba^{-1}c^2b^{-1},\
bab^{-1}=ca^{-1}c^2a^{-1}}.
\]
Observe that $\ZI^2\inj G$. Furthermore, we have:
\[
H_1(X,\ZI) \simeq \ZI\times \ZI/3\ZI\times \ZI/3\ZI.
\]

A direct computation shows that $X$ solves the constructibility problem for groups of type $\frac{15}8$:
\begin{proposition}
The 2-complex $X$ is of strict type $A_\MK+\tilde A_2$.
\end{proposition}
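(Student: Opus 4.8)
The strict type $\frac{15}{8}$ is determined by its link set --- the Moebius--Kantor graph and the incidence graph of the Fano plane $P^2\FI_2$, all edges of length $\pi/3$ --- together with its single shape, the equilateral triangle. Since $X$ is presented as a list of $15$ triples of edge labels, each triple a triangular face, the proof is a bounded combinatorial verification. The plan is to (i) recover the vertex set and the two links of $X$ from the face list, (ii) identify those links with the two named graphs, and (iii) read off the shape set and check that $X$ has empty topological boundary.

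For (i), each face $[i,j,k]$ carries three corners, each corner identifying an end of one edge with an end of a cyclically consecutive edge; the $45$ corners of the $15$ faces thus generate an equivalence relation on the $30$ ends of the edges $1,\dots,15$, whose classes are the vertices of $X$. I would check directly that there are exactly two classes, of sizes $16$ and $14$; this produces two vertices $p,q$ and, via $\chi=V-E+F$ with $E=F=15$, gives $\chi(X)=2$, as anticipated in \ts\ref{S-fake cover}. The link $\Lk p$ (resp.\ $\Lk q$) is then the graph on the size-$16$ (resp.\ size-$14$) class whose edges are the corners meeting that class, each edge assigned length $\pi/3$ because every face is equilateral. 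Here the bookkeeping $24+21=45$ corners and $16+14=30$ edge-ends, matching the Moebius--Kantor and Heawood graphs, is a useful consistency check.

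Step (ii) is the substantive one. I would first verify that each link is a \emph{simple}, \emph{connected}, $3$-regular graph --- equivalently, that $X$ is connected and every edge-end of $X$ meets exactly three corners, with no corner joining the two ends of a single edge at the same vertex. For $\Lk q$: a connected cubic simple graph on $14$ vertices has girth at most $6$ by the Moore bound, so once one checks it contains no cycle of length $3,4,5$ it must be the $(3,6)$-cage, i.e.\ the Heawood graph, which is the incidence graph of $P^2\FI_2$. For $\Lk p$: I would exhibit an explicit labelling of the size-$16$ class identifying $\Lk p$ with the standard Moebius--Kantor graph (checking as a shortcut that it is bipartite of girth $6$, which pins it down among the cubic symmetric graphs on $16$ vertices). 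This identification --- reading the two link graphs correctly off the face list and matching them to the named graphs --- is the step I expect to be the real work, though it is entirely finite; routing $\Lk q$ through the cage characterisation and $\Lk p$ through an explicit vertex labelling is the most economical path.

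Finally, for (iii): every one of the $15$ faces is a triangle, metrized as the Euclidean equilateral triangle, so the set of shapes of $X$ is exactly $\{\text{equilateral triangle}\}$; and since both links are $3$-regular no edge of $X$ is free, so $X$ has empty topological boundary and $A_X$ is computed from all of its vertices. Combining the three steps, $A_X$ is precisely the Moebius--Kantor graph together with the incidence graph of $P^2\FI_2$ (edges of length $\pi/3$) and one equilateral triangle, i.e.\ $A_X=\frac{15}{8}$, which is the assertion.
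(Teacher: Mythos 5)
Your proposal is correct and is essentially the paper's argument: the paper simply asserts the proposition as ``a direct computation,'' and the computation you outline (recovering the two edge-end classes of sizes $16$ and $14$ from the corner identifications, checking each link is simple, cubic and connected, identifying the $14$-vertex link as the Heawood graph via the $(3,6)$-cage characterisation and the $16$-vertex link as the Moebius--Kantor graph by an explicit labelling, then reading off the shape set and the absence of boundary edges) is exactly that verification. One small caution: your parenthetical shortcut for the $16$-vertex link (``bipartite of girth $6$ pins it down among the cubic symmetric graphs'') presupposes symmetry of the link, which is not known a priori; it should instead be justified by the uniqueness of the $8_3$ configuration (a cubic bipartite graph on $16$ vertices with girth at least $6$ is its Levi graph), or simply by the explicit isomorphism you already propose.
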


It is obvious that $X$ is not the double cover of a complex. As for the fake double cover in Section \ref{S-fake cover}, one can describe the collar closure explicitly, which consists of  9 faces:
\[
[[1,2,3],[1,8,13],[1,12,4],[2,13,10],\\
\]
\[
[2,12,7],[3,7,6],[3,14,7],[10,13,15],[10,11,12]].
\]
The nerve of this collar, with 6 vertices and 9 edges, is represented in the following figure.
 \[
 \begin{tikzpicture}[baseline=2.7ex]
 \coordinate (A) at (0,0);
 \coordinate (B) at (2,0);
 \coordinate (C) at (2,1);
 \coordinate (D) at (0,1);
 \coordinate (E) at (1,0);
 \coordinate (F) at (1,1);
    \draw (A) -- (B);
    \draw (C) -- (D);
    \draw (E) -- (F);
    \draw (A) to [bend left] (D);
    \draw (B) to [bend left] (C);
    \draw (A) to [bend right] (D);
    \draw (B) to [bend right] (C);
  \end{tikzpicture}
 \]

% \begin{remark}We discovered the complex $X$ in 2009 while deepening the constructions presented in \cite{rd}.\end{remark}

\section{Type preserving surgery}\label{S - type preserving constructions}

Theorem \ref{T - cobordism 74} suggests to construct groups using a surgery similar to the collar surgery in \ts\ref{S-fake cover}, by substituting cobordisms instead of collars. The point of these constructions is that the type of the given group (here type $A_\MK$) is preserved by the surgery.

Here is what a portion of such a group would look like:
\[
\cdots - \stackrel2{\bullet} - \stackrel{\frac 3 2}{\bullet} - \stackrel2{\bullet}-\stackrel2{\bullet} - \stackrel{\frac 3 2}{\bullet} - \stackrel2{\bullet}-\cdots
\]  
where ``$\stackrel2{\bullet}$'' and ``$\stackrel{\frac 3 2}{\bullet}$'' respectively symbolize the first and the second cobordism from Theorem \ref{T - cobordism 74}, and ``$-$'' refers to the collar of type $S$. 

More precisely, the construction starts with two fixed nonisomorphic group cobordisms $X\colon C\to C$ and $Y\colon C\to C$, where $C$ is a collar of type $S$, and the resulting complex of rank \sq\ corresponds to compositions in $\Bord_{\MK}$ as prescribed by the given sequence. In the above example, the complex is $\cdots\circ Y\circ X\circ Y\circ Y\circ X\circ Y\circ\cdots$

In addition, the 2-cover described in \ts\ref{S-double cover} provides filling group cobodisms of type $A_\MK$, whose collars  are also isomorphic to the collars (of type $S$) appearing the cobodisms of Theorem \ref{T - cobordism 74}.

Using these cobordisms as fillings, one can  construct four families of groups of type $A_\MK$:

\begin{enumerate}
\item the segment groups,
\item the circle groups, 
\item the $\IN$-groups, 
\item the $\ZI$-groups, 
\end{enumerate}
which are parametrized, respectively, by
\begin{enumerate}
\item  $\{\frac 3 2,2\}^{\{0,1,\ldots,n\}}$, for $n\geq 0$,
\item $\{\frac 3 2,2\}^{\ZI/n\ZI}$, for $n\geq 2$,
\item $\{\frac 3 2,2\}^{\IN}$,
\item $\{\frac 3 2,2\}^{\IZ}$.
\end{enumerate}
The corresponding groups are finitely presented in case (1) and (2), and infinitely presented in case (3) and (4). 

For example, the group $G_\omega$ in case (1) parametrized by $\omega=(\frac  3 2, 2,2)$ in $\{\frac 3 2,2\}^{\{0,1,2\}}$ is associated with the Moebius--Kantor complex  $X_\omega$ defined by
\[
{\bullet} - \stackrel{\frac 3 2}{\bullet} - \stackrel2{\bullet}-\stackrel2{\bullet} - {\bullet}
\]  
where ${\bullet} - $ and $- {\bullet} $ are filling cobordisms of type $A_\MK$.

One can write down explicit presentations for these groups, at least in cases (1) and (2).
It is clear that:

\begin{proposition}
The groups $G_\omega$ for $\omega\in \{\frac 3 2,2\}^S$, where $S=\{0,1,\ldots,n\},\ \ZI/n\ZI,\ \IN$, or  $\IZ$, admit a free $\frac 23$-transitive action on a CAT(0) Moebius--Kantor complex.
\end{proposition}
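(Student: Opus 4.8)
Recall that $G_\omega=\pi_1(X_\omega)$, where $X_\omega$ is the $2$-complex of rank \sq obtained by amalgamating the cobordisms of Theorem \ref{T - cobordism 74} with the double cover fillings of \ts\ref{S-double cover} along their common collar, in the pattern prescribed by $\omega$. The plan is to make this assembly precise, to check that $X_\omega$ is nonpositively curved, and to let $G_\omega$ act on the universal cover $X:=\widetilde{X_\omega}$.

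For the assembly, let $P\colon C\to C$ and $Q\colon C\to C$ be the two (rank $\frac 3 2$, resp.\ rank $2$) self-dual group cobordisms of Theorem \ref{T - cobordism 74}, whose boundary collar $C$ is connected and of type $S$, and let $F\colon C\to\emptyset$ be a filling of type $\frac 7 4$ coming from a double cover as in \ts\ref{S-double cover}; by Theorem \ref{T - cobordism 74} and \ts\ref{S - type preserving constructions}, the boundary collars of $P$, $Q$ and $F$ are mutually isomorphic of type $S$. For $\omega\in\{\frac 3 2,2\}^S$ put $Z_i=P$ if $\omega_i=\frac 3 2$ and $Z_i=Q$ if $\omega_i=2$, and let $X_\omega$ be obtained by composing the $Z_i$ in $\Bord_{\frac 7 4}$ in the order given by $S$ and closing up according to the shape of $S$: for $S=\{0,\dots,n\}$ cap the two ends with $F$ and its dual; for $S=\ZI/n\ZI$ glue the chain cyclically; for $S=\IN$ cap the one end with $F$; for $S=\IZ$ take the bi-infinite amalgam. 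Since every block has boundary collar $C$ and $C$ is self-dual, all the required identifications exist and $X_\omega$ is a well-defined connected $2$-complex. By the locality of the notion of type (the composition lemma of \ts\ref{S - cobordisms}, applied to the infinite amalgams one finite stretch at a time, together with the fact that the fillings are of type $\frac 7 4$), the complex $X_\omega$ is of type $\frac 7 4$; and the capping, resp.\ cyclic or bi-infinite gluing, leaves $X_\omega$ with empty topological boundary, so every vertex link of $X_\omega$ is isomorphic to the Moebius--Kantor graph and every face is an equilateral triangle.

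Next, nonpositive curvature. The Moebius--Kantor graph has girth $6$, and its edges, being the corners of equilateral Euclidean triangles, have angular length $\pi/3$; hence each vertex link of $X_\omega$ has girth $6\cdot(\pi/3)=2\pi$, so $X_\omega$ satisfies the link condition of \cite{gromov1996geometric} and is nonpositively curved. By the Cartan--Hadamard theorem for complete locally $\mathrm{CAT}(0)$ spaces, $X=\widetilde{X_\omega}$ is $\mathrm{CAT}(0)$; being a cover of $X_\omega$ it is again of type $\frac 7 4$, i.e.\ of rank \sq; and it is simplicial, since a simply connected nonpositively curved triangle complex has no loop edges (geodesics in a $\mathrm{CAT}(0)$ space being unique) and no pair of distinct edges with the same endpoints (such a pair would bound a null-homotopic bigon, hence a disk, which combinatorial Gauss--Bonnet excludes). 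The deck action $G_\omega:=\pi_1(X_\omega)\acts X$ is free (and cocompact in cases $(1)$ and $(2)$, where $X_\omega$ is finite).

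Finally, $\frac 2 3$-transitivity. Since the action is free, its orbits of vertices correspond bijectively to the vertices of $X_\omega$, and each triangle of $X$ maps isomorphically onto a triangle of $X_\omega$; so it suffices to show that every triangle of $X_\omega$ contains at most two of its vertices. Any such triangle lies in a unique block (a copy of $P$, $Q$ or $F$): if it lies in the core of that block's model geometry it meets only the block's single interior vertex; otherwise it lies in one of the type-$S$ collars joining two consecutive blocks, and by Lemma \ref{L - separating collar}---in view of the span decomposition of Lemma \ref{L-minimal collar spans}, for which $\spn(C)=\{x,y\}$ has two elements---this collar is the separating collar of the interior vertices of the two adjacent blocks and consists precisely of triangles whose vertex set lies in that two-element set. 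Either way the triangle meets at most two vertices of $X_\omega$, so $G_\omega\acts X$ is $\frac 2 3$-transitive, which proves the proposition. The real work is the bookkeeping of the assembly step---checking that the boundary collars of $P$, $Q$ and $F$ genuinely agree as collars of type $S$, so that the amalgamations are well defined, and that the amalgam is of type $\frac 7 4$ with no residual boundary; granting that, the $\mathrm{CAT}(0)$ property and the transitivity count are essentially formal.
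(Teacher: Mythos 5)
Your proposal is correct and follows essentially the paper's own route: the paper states this proposition as an immediate consequence of the construction just described (composing the two cobordisms of Theorem \ref{T - cobordism 74} and the double-cover fillings along their common collar of type $S$), and offers no further argument, while you simply spell out the same assembly together with the standard verifications (type preservation via the composition lemma, the girth-$2\pi$ link condition for nonpositive curvature, and the count of at most two vertices per triangle giving $\frac 2 3$-transitivity of the free deck action on the CAT(0) universal cover). The extra details you supply (simpliciality of the cover, the separating-collar description of the crossing faces) are consistent with the paper's lemmas and introduce no gap.
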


We now turn to the space $\Lambda_\MK $ of complexes of type $A_\MK$.  

\begin{lemma}\label{L - at most 2to1}
Let $S$ refer to either $\{0,1,\ldots,n\}$, $\ZI/n\ZI$, $\IN$, or to $\IZ$, and $\Lambda_\MK $ denote the space of Moebius--Kantor complexes. 
The map 
\begin{align*}
\rho\colon\{\frac 3 2,2\}^S&\to \ \ \Lambda_\MK \\
\omega &\mapsto (X_\omega,0)
\end{align*}
where $(X_\omega,0)$ means that the complex $X_\omega$ pointed at the unique vertex in the cobordism over $0\in S$, is 
\begin{itemize}
\item injective if $S=\{0,1,2,\cdots,n\}$ or $S=\IN$, and,
\item at most 2-to-1 if $S=\ZI/n\ZI$ or $S=\ZI$.
\end{itemize}
\end{lemma}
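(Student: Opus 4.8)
The plan is to analyze the complex $X_\omega$ as a linear (or cyclic) concatenation of the basic cobordism pieces, and to reconstruct the parameter $\omega$ (up to the relevant symmetry) from the pointed isomorphism type of $(X_\omega,0)$. First I would set up the combinatorial bookkeeping: each $X_\omega$ decomposes as a union of cobordism blocks $B_0, B_1, B_2, \ldots$ glued consecutively along copies of the collar $C$ of type $S$, where block $B_i$ is one of the two cobordisms of Theorem~\ref{T - cobordism 74} according to whether $\omega(i)=\frac 32$ or $\omega(i)=2$, and (in the segment and $\IN$ cases) the two extremal blocks are the filling cobordisms coming from the $2$-cover of \ts\ref{S-double cover}. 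The key local observation is that, by Theorem~\ref{T - cobordism 74}, each interior block contains a single vertex whose link is the Moebius--Kantor graph together with two marked roots $\alpha,\beta$ of equal rank $\rk\in\{\frac 32,2\}$, and this rank is an \emph{isomorphism invariant of the pointed block} (Lemma~\ref{L - aut two orbits roots} shows the two root ranks lie in distinct $\Aut(L)$-orbits). Moreover the collars of type $S$ separating consecutive blocks are all pairwise isomorphic, so the ``gluing interfaces'' carry no extra information and any isomorphism of complexes must carry blocks to blocks.

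Next I would argue that an isomorphism $(X_\omega,0)\xrightarrow{\simeq}(X_{\omega'},0)$ induces an isomorphism of the underlying ``block graphs'' (a path for $S=\{0,\ldots,n\}$ or $S=\IN$, a cycle for $S=\ZI/n\ZI$ or $S=\IZ$) which preserves the distinguished block over $0$ and carries each block to a block of the same rank label. This reduces the statement to a purely combinatorial claim about label-preserving, basepoint-preserving automorphisms of a labelled path or labelled cycle. For a path with a marked endpoint-region (the block over $0$), the only such automorphism is the identity, because a path with a distinguished vertex has trivial automorphism group; hence $\rho$ is injective when $S=\{0,1,\ldots,n\}$, and the same rigidity argument with ``the end containing $0$'' works for $S=\IN$. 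For a cycle $\ZI/n\ZI$ (or the bi-infinite line $\IZ$) with a marked vertex $0$, the automorphism group of the cycle fixing $0$ is $\{\id,\ \text{reflection through }0\}$, of order $2$; therefore at most two parameters $\omega$ — namely $\omega$ and its reflection $i\mapsto\omega(-i)$ — can give isomorphic pointed complexes, so $\rho$ is at most $2$-to-$1$. I would also note the role of self-duality: the reflection only actually identifies $X_\omega$ with $X_{\omega^{\mathrm{rev}}}$ because each cobordism block is self-dual (the last two lemmas in the proof of Theorem~\ref{T - cobordism 74}), so reversing orientation along the chain does not change the isomorphism type of any individual block — this is what makes the reflection an honest isomorphism rather than merely a combinatorial symmetry of the index set.

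The main obstacle I anticipate is making precise that an abstract isomorphism of $2$-complexes $X_\omega\to X_{\omega'}$ must respect the block decomposition, i.e.\ that it cannot ``shear'' across a gluing collar or map part of one block into a neighbouring one. The way I would handle this is to characterize the separating collars of type $S$ intrinsically inside $X_\omega$ — for instance as the collars arising as $C_{x,y}$ between the two consecutive vertices $x,y$ of adjacent blocks, or via the span decomposition of \ts\ref{S - ST Lemma} — so that the set of such collars, and hence the dual graph of the block decomposition, is canonically attached to $X_\omega$ and therefore preserved by any isomorphism. Once the decomposition is shown to be canonical, the rank label of each block is read off from the link at its central vertex via Lemma~\ref{L - aut two orbits roots}, and the counting argument above finishes the proof. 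A secondary point to verify carefully is that in the cyclic cases the reflection genuinely produces a pointed isomorphism (not just a parametrization coincidence), which is exactly where self-duality of the two cobordisms and the self-duality of the type-$S$ collar (Remark after Lemma~\ref{L - order4}) are used.
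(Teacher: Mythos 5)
Your proposal is correct and follows essentially the same route as the paper: the paper's (much terser) proof just observes that a pointed isomorphism must send the cobordism block over $t$ to the block over $t$ (true at $0$ since the basepoint lies there, then by induction along the one-dimensional index set), yielding $\omega=\omega'$ in the path cases and $\omega'(k)\in\{\omega(k),\omega(-k)\}$ in the cyclic cases. Your additional scaffolding (intrinsic characterization of the type-$S$ collars, the rank invariant distinguishing the two blocks, self-duality of the blocks) only makes explicit what the paper leaves implicit, and the self-duality point is in fact not needed for the ``at most 2-to-1'' bound, only for the later ``essentially 2-to-1'' lemma.
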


Note that $\rho$ is continuous, and has as image a compact subset of $\Lambda_\MK $.

\begin{proof}
Assume first that $S=\{0,1,2,\cdots,n\}$ or $S=\IN$. Let  $\omega, \omega'$ be two elements of $\{\frac 3 2,2\}^S$. If 
\[
\p\colon (X_\omega,0)\to (X_{\omega'},0)
\] 
is an isomorphism, then $\p$ sends the cobordism over $t\in S$ in $X_\omega$ to the cobordism over $t$ in $X_{\omega'}$. Indeed this is so by definition for $0\in S$, and the result follows by induction. 
Therefore, for every $s\in S$ we have $\omega(s) = \omega'(s)$, so $\omega=\omega'$. 

If $S=\ZI/n\ZI$ or $S=\ZI$, induction (namely, the 1-dimensional nature of $S$) shows that either $\omega(k) = \omega'(k)$ or $\omega(k) = \omega'(-k)$. Therefore the  map $\rho$ is at most 2-to-1.  
\end{proof}

We will prove: 

\begin{theorem}\label{T - 74 diffuse}
The space $\Lambda_\MK $ supports a diffuse invariant probability measure. 
\end{theorem}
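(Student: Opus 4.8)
The strategy is to produce a diffuse invariant probability measure on $\Lambda_{\frac 7 4}$ by pushing forward a diffuse invariant measure from a full shift. The building blocks are supplied by Theorem \ref{T - cobordism 74}: the two self-dual group cobordisms $X\colon C\to C$ and $Y\colon C\to C$, where $C$ is the collar of type $S$, together with the filling cobordism coming from the double cover of \ts\ref{S-double cover}, whose collar is again isomorphic to $C$. These allow us, for every $\omega\in\{\frac 3 2,2\}^{\IZ}$, to form the $\IZ$-group complex $X_\omega$ obtained by composing the cobordisms $X$ and $Y$ according to $\omega$ (and capping off, or not, as appropriate), exactly as in \ts\ref{S - type preserving constructions}. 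By construction each $X_\omega$ is a CAT(0) complex of strict type $\frac 7 4$ carrying a free $\frac 2 3$-transitive $\IZ$-action by translation along the chain, so $(X_\omega,*)\in\Lambda_{\frac 7 4}$ once we fix a base vertex $*$ in the cobordism over $0\in\IZ$.

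\textbf{Step 1: the coding map.} Consider the Bernoulli shift $\Sigma:=\{\frac 3 2,2\}^{\IZ}$ with the shift homeomorphism $\sigma$, equipped with the $\big(\frac 1 2,\frac 1 2\big)$-Bernoulli measure $\mu_0$, which is diffuse and $\sigma$-invariant. Define $\rho\colon\Sigma\to\Lambda_{\frac 7 4}$ by $\rho(\omega)=(X_\omega,*)$ as in Lemma \ref{L - at most 2to1}; it is continuous and, by that lemma, at most $2$-to-$1$ (the only coincidences are $\omega\leftrightarrow\omega^{-}$, the order-reversal). The key compatibility is that moving the base point of $X_\omega$ from the cobordism over $0$ to the cobordism over $1$ produces the pointed complex $(X_{\sigma\omega},*)$ up to isomorphism, because the cobordisms $X,Y$ are self-dual and the gluing collar $C$ is self-dual (Theorem \ref{T - cobordism 74}); hence $\rho\circ\sigma$ and the ``shift of base point'' operation on $\Lambda_{\frac 7 4}$ agree. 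More precisely, $\rho$ intertwines the orbit equivalence relation generated by $\sigma$ on $\Sigma$ with the base-point dynamic $R_{\frac 7 4}$ restricted to $\rho(\Sigma)$: two sequences lie in the same $\sigma$-orbit (up to the order-two reversal symmetry) iff the corresponding pointed complexes differ only by a choice of base vertex.

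\textbf{Step 2: pushing forward and checking invariance.} Set $\mu:=\rho_*\mu_0$, a Borel probability measure on $\Lambda_{\frac 7 4}$ supported on the compact set $\rho(\Sigma)$. Diffuseness of $\mu$: since $\mu_0$ is diffuse and $\rho$ is at most $2$-to-$1$, every fiber $\rho^{-1}(\text{point})$ has $\mu_0$-measure zero, so $\mu$ has no atoms. Invariance under $R_{\frac 7 4}$: the base-point dynamic is generated (as an equivalence relation) by the countably many ``change of base vertex'' moves; on $\rho(\Sigma)$ the relevant move is exactly the transport of $\rho$ across $\sigma$ described in Step 1, and on the remaining vertices of $X_\omega$ (those inside a cobordism, away from the gluing collars) the base-point change is realized by an isometry of $X_\omega$ itself, hence fixes the point of $\Lambda_{\frac 7 4}$. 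Therefore the Radon--Nikodym cocycle of $\mu$ with respect to $R_{\frac 7 4}$ is the push-forward of that of $\mu_0$ with respect to the $\sigma$-orbit relation, which is trivial because $\mu_0$ is shift-invariant; so $\mu$ is $R_{\frac 7 4}$-invariant.

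\textbf{Main obstacle.} The delicate point is Step 1: one must verify that the base-point transport on $\Lambda_{\frac 7 4}$ genuinely corresponds to the shift on $\Sigma$ and to nothing more, i.e.\ that distinct shift orbits land in distinct $R_{\frac 7 4}$-classes and that no ``accidental'' isomorphism between complexes $X_\omega$ and $X_{\omega'}$ arises from the internal symmetry of the cobordisms or the collar. This is where the rigidity packaged in Theorem \ref{T - cobordism 74}---the uniqueness of the two cobordisms, their self-duality, and the classification of the collar as type $S$ with its specified span decomposition---is essential: it forces any isomorphism $X_\omega\cong X_{\omega'}$ to respect the chain structure and hence to be (up to the order-reversal) a translation, which is precisely the content of Lemma \ref{L - at most 2to1}. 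Once that combinatorial rigidity is in hand, the measure-theoretic part (diffuseness, invariance of the push-forward) is routine. A final cosmetic step is to note that $\mu$, being supported on $\rho(\Sigma)$ and invariant, is a probability measure of the required kind on all of $\Lambda_{\frac 7 4}$.
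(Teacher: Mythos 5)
Your plan is the same as the paper's: code the chain complexes by $\omega\in\{\frac 32,2\}^{\ZI}$, equip this Cantor set with the Bernoulli measure, and push it forward along the map $\rho\colon\omega\mapsto (X_\omega,0)$ of Lemma \ref{L - at most 2to1}, using the rigidity from Theorem \ref{T - cobordism 74} to see that $\rho$ is an orbit map. The genuine gap is in your Step 2, exactly at the point you dismiss as ``routine''. For a non-injective orbit map, invariance of the measure upstairs does \emph{not} automatically descend to the push-forward: if the fibre cardinality of $\rho$ were, say, $1$ on one positive-measure portion of an orbit and $2$ on another, the push-forward would assign unequal masses to sets matched by a partial isomorphism of the target relation, and invariance would fail. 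Your assertion that ``the Radon--Nikodym cocycle of $\mu$ with respect to $R_{\frac 74}$ is the push-forward of that of $\mu_0$'' is precisely the statement that needs proof, and you give no argument for it; nor do you establish the ingredient it requires, namely that $\rho$ is essentially \emph{exactly} $2$-to-$1$ (you only quote ``at most $2$-to-$1$''), which amounts to showing that the reflection-symmetric sequences form a null set.

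The paper treats this as the heart of the proof. It replaces the shift by the infinite dihedral group $G=\ZI\rtimes\ZI/2\ZI$ (so that the order-reversal you mention parenthetically is part of the acting group), proves that this generalized Bernoulli action is essentially free (Lemma \ref{L - essentially free}), deduces that $\rho$ is essentially exactly $2$-to-$1$ (Lemma \ref{L - 2to1}), and then proves a general statement, Lemma \ref{L - quotient measure}: an essentially $n$-to-$1$ Borel orbit map pushes an invariant measure to an invariant measure, shown via the right-counting measures and the flip (or via a Lusin selection argument). The authors explicitly note that this push-forward lemma does not seem to be in the orbit-equivalence literature, so it cannot be waved through as routine. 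A secondary inaccuracy in your Step 2: the claim that a change of base point to the remaining vertices of $X_\omega$ (those on the glued collar closures) ``is realized by an isometry of $X_\omega$ itself'' is unjustified --- there is no reason for an isometry carrying such a vertex to a central one --- and the paper's formulation avoids needing any such statement, since invariance is obtained wholesale from Lemma \ref{L - quotient measure} applied to the $G$-orbit map. To repair your write-up, import these three ingredients: essential freeness of the $D_\infty$-action, exact (essential) $2$-to-$1$-ness of $\rho$, and a proof of the push-forward lemma.
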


\begin{proof}
Consider the semi-direct product $G= \ZI\rtimes \ZI/2\ZI$
acting on the Cantor set $X:=\{\frac 3 2,2\}^\ZI$ as follows
\[
s\cdot (\omega_k)_{k\in \ZI} = (\omega_{s^{-1}k})_{k\in \ZI}.
\]
Endow $X$ with the Bernoulli measure $\mu:=(\frac 1 2 \delta_{3/2}+\frac 1 2\delta_{2})^{\otimes \ZI}$.
It is well--known and easy to prove that:
\begin{lemma}\label{L - essentially free}
The action $G\acts (X,\mu)$ is an essentially free pmp action.
\end{lemma}

\begin{proof}
The fact that $\mu$ is invariant is clear. Let us explain why the action is essentially free. Let $e\neq s\in G$.  Then $s\omega=\omega$ if and only if  $\omega$ is constant on the $s$ orbit in $\ZI$. Since $s\neq e$, we can find infinitely many pairwise disjoint pairs  of integers (which depend on $s$), say $(k_i,l_i)_{i\in \IN}$, such that  $\omega_{k_i}=\omega_{l_i}$ for every fixed point $\omega$ of $s$. This gives infinitely many constraints on the coordinates of the elements $\omega$ in the fixed point set $X^s$, and since $\mu$ is a product measure, it follows that $\mu(X^s)=0$.
\end{proof}

\begin{remark}
The action $G\acts (X,\mu)$ is sometimes called a generalized Bernoulli shift, and is useful to give examples of operator algebras.%  (see e.g.\  \cite{popa2008strong} and references therein).
\end{remark}
 
The map 
\begin{align*}
\rho\colon \{\frac 3 2,2\}^\ZI&\to \ \ \Lambda_\MK \\
\omega &\mapsto (X_\omega,0)
\end{align*}
is an orbit map, namely $x\sim_G y\ssi \rho(x)\sim_{R_A}\rho(y)$. 

Indeed,  if $s\in G$ and $\omega'=s\omega$, then by construction
$(X_{\omega'},0)=(X_\omega,s^{-1}(0))$, so  $(X_{\omega'},0)\sim_{R_A}(X_\omega,0)$. Conversely if $(X_{\omega},0)\sim_{R_A}(X_{\omega'},0)$ then let $\p\colon X_\omega\to X_{\omega'}$ be an isometry. Since the geometry encodes the sequences $\omega$ and $\omega'$ in the  corbodisms at vertices, in an orderly fashion, there exists a $k_0\in \ZI$ such that that either $\omega(k) = \omega'(k_0+k)$ or $\omega(k) = \omega'(k_0-k)$ for all $k\in \ZI$. In both cases we can find $s\in G$ such that $\omega'=s\omega$.

\begin{lemma}\label{L - 2to1}
The map $\rho$ is essentially 2-to-1.
\end{lemma}
 
 \begin{proof}
 By essentially 2-to-1 we mean that there is a measurable set $X'\subset X$ of full measure $\mu(X')=1$ and such that $\rho_{|X'}$ is 2-to-1. By Lemma \ref{L - at most 2to1}, $\rho$ is at most 2-to-1. If $\rho(\omega)=\rho(\omega')$ and $\omega\neq \omega'$ then $\omega(k) = \omega'(-k)$ for all  $k\in \ZI$, that is, $\omega'=s\omega$ where $s\in G$ is the symmetry around 0. Therefore the restriction of $\rho$ to $X':=X\setminus X^s$ is a 2-to-1 map.  By Lemma \ref{L - essentially free}, $\mu(X^s)=0$.
 \end{proof}

\begin{lemma}\label{L - lemma push forward}
The measure $\rho_*(\mu)$ is invariant (and diffuse).
\end{lemma}

This lemma will be a consequence of Lemma \ref{L - 2to1} and Lemma \ref{L - quotient measure} below, and it concludes the proof of Theorem \ref{T - 74 diffuse}.  \end{proof}

This suggests the following definition.
 
\begin{definition}\label{D - structure group}
Let $A$ be a type. A \emph{structure group} for  $A$ is a countable group $G$ for which there exists an integer $n\geq 1$, an essentially free  pmp action $G\acts X$ on standard probability space $X$, and an essentially $n$-to-1 Borel map
\[
\rho \colon X\to \Lambda_A
\]  
such that $x\sim_G y\ssi \rho(x)\sim_{R_A} \rho(y)$ for every $x,y\in X$.
\end{definition} 

It may be useful to also consider the case of nonsingular actions (instead of pmp). 
The map $\rho$ allows to push-forward (diffuse, invariant) measures on $X$  to (diffuse, invariant) measures to the space $\Lambda_A$:

\begin{lemma}\label{L - quotient measure}
Let $\rho\colon X\to Y$ be a  Borel map between standard probability spaces.  Assume that
\begin{enumerate}[a)]
\item $R\subset X\times X$ and $S\subset Y\times Y$ are standard Borel equivalence relations with countable classes
\item $\rho$ is an orbit map, in the sense that $x\sim y\ssi \rho(x)\sim \rho(y)$ for all $x,y\in X$.
\item $\mu$ is an $R$-invariant  probability measure on $X$
\item $n\geq 1$ is an integer such that $\rho$ is essentially $n$-to-1 with respect to $\mu$ (there exists  a measurable set $X'\subset X$ with $\mu(X')=1$ such that $\rho_{|X'}$ is $n$-to-1)
\end{enumerate}
Then the measure $\rho_*(\mu)$ is an $S$-invariant measure on $Y$.   
\end{lemma}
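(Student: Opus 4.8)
The plan is to reduce $S$-invariance of $\rho_*\mu$ to invariance under a countable group generating $S$, and then to lift each such generator, through the map $\rho$, to a measure-preserving Borel automorphism of $X$, using that the fibres of $\rho$ are \emph{finite} together with the orbit-map property (b).

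First I would fix, by the Feldman--Moore theorem applied to $(Y,S)$, a countable group $\Lambda=\{\lambda_m\}_m$ of Borel automorphisms of $Y$ with $S=\bigcup_m\gr(\lambda_m)$, and invoke the standard fact that a measure $\nu$ on $Y$ is $S$-invariant iff it is $\Lambda$-invariant: any partial Borel isomorphism $\phi$ with $\gr(\phi)\subseteq S$ decomposes, by Lusin--Novikov, as $\dom\phi=\bigsqcup_m D_m$ with $\phi|_{D_m}=\lambda_m|_{D_m}$, and $\Lambda$-invariance of $\nu$ then gives $\phi_*(\nu|_{\dom\phi})=\nu|_{\ran\phi}$. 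So, writing $\nu:=\rho_*\mu$, it suffices to prove $\lambda_*\nu=\nu$ for a single generator $\lambda=\lambda_m$.

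The heart of the argument is the lift of $\lambda$ through $\rho$. Let $X'$ be the $\mu$-conull Borel set of hypothesis (d) on which $\rho$ is exactly $n$-to-one. Since $\rho|_{X'}$ has finite fibres, Lusin--Novikov provides Borel sections $\sigma_1,\dots,\sigma_n\colon\rho(X')\to X'$ with $\rho^{-1}(y)\cap X'=\{\sigma_1(y),\dots,\sigma_n(y)\}$ and the $\sigma_i(y)$ pairwise distinct. I would then set
\[
\tilde\lambda\bigl(\sigma_i(y)\bigr):=\sigma_i\bigl(\lambda(y)\bigr),
\]
a Borel map on the set of $x\in X'$ with $\lambda(\rho x)\in\rho(X')$. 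By construction $\rho\circ\tilde\lambda=\lambda\circ\rho$; the map $\tilde\lambda$ is injective since $\lambda$ is and the sections are fibrewise distinct and exhaustive; and $\gr(\tilde\lambda)\subseteq R$, because $\rho(\sigma_i(y))=y\sim_S\lambda(y)=\rho(\sigma_i(\lambda y))$ and hypothesis (b) promotes this to $\sigma_i(y)\sim_R\sigma_i(\lambda y)$. Thus $\tilde\lambda$ is a Borel isomorphism between Borel subsets of $X$ whose graph lies in $R$, and both its domain and range are $\mu$-conull once one knows the essential image of $\rho$ is $S$-invariant modulo $\nu$-null sets — which holds in the situations where this lemma is applied (cf.\ Lemma~\ref{L - lemma push forward}, where $\rho(X_\omega,0)$ ranges over an $R_{\frac 7 4}$-invariant subset of $\Lambda_{\frac 7 4}$). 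Since $\mu$ is $R$-invariant by (c), it is preserved by every partial Borel isomorphism of $X$ with graph in $R$, so $\tilde\lambda_*\mu=\mu$; hence, for Borel $E\subseteq Y$,
\[
\nu(\lambda^{-1}E)=\mu(\rho^{-1}\lambda^{-1}E)=\mu(\tilde\lambda^{-1}\rho^{-1}E)=\mu(\rho^{-1}E)=\nu(E),
\]
using $\rho\circ\tilde\lambda=\lambda\circ\rho$ for the second equality and $\tilde\lambda_*\mu=\mu$ for the third. This shows $\nu$ is $\Lambda$-invariant, hence $S$-invariant.

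I expect the main obstacle to be the construction of a genuinely \emph{bijective} Borel lift $\tilde\lambda$ of $\lambda$, as opposed to a mere Borel map intertwining $\rho$: this is exactly the place where the \emph{finiteness} of the multiplicity $n$ is used, since it lets one enumerate the fibres of $\rho$ on the two sides of $\lambda$ and match them index by index. A secondary, clerical point is the domain/range bookkeeping: one needs the essential image of $\rho$ to be $S$-invariant up to $\nu$-null sets (which should be recorded as a hypothesis, or absorbed into reading "$n$-to-one" as "$n$-to-one onto $Y$"); without something of this kind the statement can fail, e.g.\ when $\rho(X)$ meets an $S$-class without exhausting it.
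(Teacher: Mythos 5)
Your proof is correct, but it follows a different route from the one the paper actually prints. The paper's proof is ``functorial'': it forms $\pi=\rho\times\rho\colon R\to S$, pushes forward the right-counting measure $\nu$ of $\mu$ on $R$, identifies $\pi_*\nu$ with $n$ times the right-counting measure on $S$ attached to $\rho_*\mu$, and then transports flip-invariance from $R$ to $S$ via the Feldman--Moore criterion. Your argument is instead a relative of the ``pointwise'' proof sketched in the remark following Lemma~\ref{L - quotient measure}: there, an arbitrary partial automorphism $\psi$ of $S$ is lifted by decomposing $\pi^{-1}(\psi)$ into $n^2$ partial isomorphisms $\varphi_{i,j}\colon A_i\to B_j$ of $R$ and comparing the measures of the pieces. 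You first reduce to a countable group $\Lambda$ generating $S$ (Feldman--Moore on $Y$) and then, thanks to a coherent Borel enumeration $\sigma_1,\dots,\sigma_n$ of the fibres, lift each generator to a \emph{single} partial isomorphism $\tilde\lambda$ with graph in $R$ (hypothesis (b) is used exactly where the paper uses it, to see that lifted pairs lie in $R$). Your version dispenses with the counting identity $\pi_*\nu=n\nu_\rho$ and only uses that the fibre cardinality is a.e.\ constant, so it would work verbatim for constant countably infinite multiplicity; what the paper's main proof buys in exchange is the explicit relation between the two counting measures, which is slightly more information than bare invariance.

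Your closing caveat is well taken, and it applies to the paper's computation as well: the step $|\{y\in X:(z,\rho(y))\in A\}|=n\,|\{t\in Y:(z,t)\in A\}|$ presupposes that every point $S$-equivalent to a point of the essential image of $\rho$ has exactly $n$ preimages, i.e.\ that the essential image is $S$-saturated up to a $\rho_*\mu$-null set (equivalently, that ``$n$-to-$1$'' is read as onto $Y$). A one-point $X$ mapped to one point of a two-point $S$-class satisfies (a)--(d) as literally stated but yields a non-invariant $\rho_*\mu$, so some such hypothesis must indeed be added. In the intended application (Lemmas~\ref{L - at most 2to1}, \ref{L - 2to1} and \ref{L - lemma push forward}) this is harmless: any pointed complex $R_{\frac 7 4}$-equivalent to some $(X_\omega,0)$ is itself pointed-isomorphic to some $(X_{\omega'},0)$, so the image of $\rho$ is saturated and both your lift and the paper's counting argument go through.
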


This lemma does not seem to belong to the literature on orbit equivalence (see e.g.\ \cite{kechris2004topics} or \cite{hjorth2000classification}), so we give a proof.  

\begin{proof}
Consider the map 
\[
\pi:=\rho\times \rho\colon X\times X\to Y\times Y
\] 
and its restriction $\pi\colon R\to S$. It is clear that
\[
\pi\circ \sigma= \sigma\circ \pi
\]
where $\sigma$ denotes the flip map $(x,y)\to (y,x)$ on both $R$ and $S$. The measure $\mu$ is invariant if and only if 
\[
\sigma_*\nu=\nu
\]
where 
\[
\nu(A):=\int_X |A\cap \{x\}\times X|\d\mu(x)
\]
is the right-counting measure on $R$ (see \cite[Theorem 2]{feldman1977ergodic}). 

Since
\[
\sigma_*(\pi_*\nu)=(\sigma\circ \pi)_*\nu=(\pi\circ \sigma)_*\nu = \pi_*( \sigma_*\nu) = \pi_*\nu,
\]
the lemma follows from the fact that $\pi_*\nu$ is a multiple of the right-counting measure $\nu_\rho$ on $S$ associated with $\rho_*\mu$: 
\begin{align*}
\pi_*\nu(A)&=\int_X|\pi^{-1}(A)\cap \{x\}\times X|\d\mu(x)\\
&=\int_X|\{(x,y)\in R, (\rho(x),\rho(y))\in A\}|\d\mu(x)\\
&=\int_X|\{y\in X, (\rho(x),\rho(y))\in A\}|\d\mu(x)\\
&=\int_Y|\{y\in X, (z,\rho(y))\in A\}|\d\rho_*\mu(z)\\
&=n\int_Y|\{t\in Y, (z,t)\in A\}|\d\rho_*\mu(z)\\
&=n\nu_\rho(A)
\end{align*}
for every Borel set $A\subset S$.
\end{proof}

\begin{remark}
There is a pointwise (less functorial) proof of Lemma \ref{L - quotient measure}. Let $\psi\subset S\times S$ be a partial automorphism of $S$. Then $\pi^{-1}(\psi)$ is a subset of $R$ whose (left and right) fibers over $X$ are either empty, or they have cardinality $n$. Using the Lusin selection theorem (for Borel maps with countable fibers), one can find partitions:
\[
\rho^{-1}(\dom \psi) = \bigsqcup_{i=1}^n A_i,\ \ \ \rho^{-1}(\ran \psi) = \bigsqcup_{j=1}^n B_j,
\]  
and partial isomorphisms $\p_{i,j}\colon A_i\to B_j$ of $R$ such that 
\[
\pi^{-1}(\psi)=\bigsqcup_{i,j=1} ^n \p_{i,j}.
\]
Since $\mu$ is invariant, we have $\mu(A_i)=\mu(B_j)$ for all $i,j=1\ldots, n$, and 
\[
\rho_*\mu(\dom \psi)=n\mu(A_1)=n\mu(B_1)=\rho_*\mu(\ran \psi)
\]
which shows that $\rho_*\mu$ is $S$-invariant. 
\end{remark}

Theorem \ref{T - 74 diffuse} is, therefore, a consequence of:

\begin{theorem}\label{T - structure group}
The dihedral group $D_\infty$ is  a structure group for the type $A_\MK $. 
 \end{theorem}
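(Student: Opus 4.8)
The plan is to check directly that the data assembled in the proof of Theorem \ref{T - 74 diffuse} exhibit $D_\infty$ as a structure group in the sense of the definition above, with $n=2$. First I would identify $D_\infty$ with the semidirect product $\ZI\rtimes \ZI/2\ZI$ used there, acting on the Cantor set $X=\{\frac 3 2,2\}^\ZI$ by the generalized Bernoulli shift $s\cdot(\omega_k)_{k\in\ZI}=(\omega_{s^{-1}k})_{k\in\ZI}$, and endow $X$ with the product measure $\mu=(\frac 1 2\delta_{3/2}+\frac 1 2\delta_2)^{\otimes\ZI}$. By Lemma \ref{L - essentially free} this action is an essentially free pmp action on a standard probability space, so the first requirements of the definition are met.

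Next I would take $\rho\colon X\to\Lambda_{\frac 7 4}$ to be the map $\omega\mapsto(X_\omega,0)$ constructed in \ts\ref{S - type preserving constructions}, where $X_\omega$ is the $\ZI$-group complex of rank \sq\ obtained by composing along $\ZI$ the two self-dual cobordisms of Theorem \ref{T - cobordism 74} according to the pattern prescribed by $\omega$, glued via the type-$S$ collar, and $0$ is the distinguished vertex lying in the cobordism over $0\in\ZI$. This $\rho$ is Borel (in fact continuous), and, as established in the proof of Theorem \ref{T - 74 diffuse}, it is an orbit map: $\omega\sim_G\omega'$ if and only if $\rho(\omega)\sim_{R_A}\rho(\omega')$. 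The forward implication is immediate from the identity $(X_{s\omega},0)=(X_\omega,s^{-1}(0))$; the reverse uses that any isometry $X_\omega\to X_{\omega'}$ must carry the vertex $0$ to a vertex of $X_{\omega'}$ and hence, because the cobordisms are threaded along the one-dimensional axis in an orderly fashion, produces a $k_0\in\ZI$ with $\omega(k)=\omega'(k_0+k)$ or $\omega(k)=\omega'(k_0-k)$ for all $k$, which is realized by an element of $D_\infty$.

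Finally I would invoke Lemma \ref{L - 2to1}: the map $\rho$ is essentially 2-to-1, the two generic preimages of $(X_\omega,0)$ being $\omega$ and its reflection about $0$, and the exceptional set $X^s$ (with $s$ that reflection) has $\mu$-measure zero by Lemma \ref{L - essentially free}. Thus $\rho$ is essentially $n$-to-1 with $n=2$, and every clause of the definition of a structure group is verified, so $D_\infty$ is a structure group for $A_{\frac 7 4}$.

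The substantive point — already handled in \ts\ref{S - type preserving constructions} and in the proof of Theorem \ref{T - 74 diffuse}, but the crux of the matter — is the orbit-map property: the geometry of $X_\omega$ must rigidly record the sequence $\omega$ along its $\ZI$-axis of cobordisms, so that an abstract isometry of complexes is forced to be, up to an overall reflection, a shift. This in turn rests on Theorem \ref{T - cobordism 74}: the two building-block cobordisms are nonisomorphic, so the $\frac 3 2$-versus-$2$ label at each site is an isometry invariant, while the type-$S$ collars joining consecutive cobordisms are all mutually isomorphic, leaving an isometry no freedom beyond the image of $0$ and a choice of orientation — precisely the generators of $D_\infty$.
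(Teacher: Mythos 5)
Your proposal is correct and takes essentially the same route as the paper: the paper's justification of this theorem is precisely the assembly of the ingredients already established in the proof of Theorem \ref{T - 74 diffuse} --- the essentially free pmp generalized Bernoulli action of $\ZI\rtimes\ZI/2\ZI\simeq D_\infty$ on $\{\frac 3 2,2\}^{\ZI}$ (Lemma \ref{L - essentially free}), the orbit-map property of $\rho\colon\omega\mapsto (X_\omega,0)$, and the essentially 2-to-1 property (Lemma \ref{L - 2to1}) --- checked against the definition of a structure group with $n=2$, exactly as you do. Your closing observation that the rigidity of the encoding rests on the two cobordisms of Theorem \ref{T - cobordism 74} being nonisomorphic is the same point the paper makes when it says the geometry encodes the sequence ``in an orderly fashion.''
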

 
This theorem shows that not only surgery for the type $A_\MK $ is  possible, but that it can be done with great freedom.  The  questions raised in \cite{Esurv} for the type $\tilde A_2$ remain open.

\end{document}